%
%
%
%
%
%
%
%
\documentclass[10pt,twoside]{amsart}  
%
%
\def\YEAR{\year}\newcount\VOL\VOL=\YEAR\advance\VOL by-1995
\def\firstpage{1}\def\lastpage{1000}
\def\received{}\def\revised{}
\def\communicated{}

\makeatletter
\def\magnification{\afterassignment\m@g\count@}
\def\m@g{\mag=\count@\hsize6.5truein\vsize8.9truein\dimen\footins8truein}
\makeatother

\oddsidemargin1.91cm\evensidemargin1.91cm\voffset1.4cm

\textwidth12.0cm\textheight19.0cm

\font\eightrm=cmr8
\font\caps=cmcsc10                    
\font\Caps=cmcsc10 scaled \magstep1   

%


\pagestyle{myheadings}
\pagenumbering{arabic}
\setcounter{page}{\firstpage}

\makeatletter
\setlength\topmargin {14\p@}
\setlength\headsep   {15\p@}  
\setlength\footskip  {25\p@}  
\setlength\parindent {20\p@} 
\@specialpagefalse\headheight=8.5pt
\def\DocMath{}
\renewcommand{\@evenhead}{%
    \ifnum\thepage>\lastpage\rlap{\thepage}\hfill%
    \else\rlap{\thepage}\slshape\leftmark\hfill{\caps\SAuthor}\hfill\fi}%
\renewcommand{\@oddhead}{%
    \ifnum\thepage=\firstpage{\DocMath\hfill\llap{\thepage}}%
    \else{\slshape\rightmark}\hfill{\caps\STitle}\hfill\llap{\thepage}\fi}%
\makeatother

\def\TSkip{\bigskip}
\newbox\TheTitle{\obeylines\gdef\GetTitle #1
\ShortTitle  #2
\SubTitle    #3
\Author      #4
\ShortAuthor #5
\EndTitle
{\setbox\TheTitle=\vbox{\baselineskip=20pt\let\par=\cr\obeylines%
\halign{\centerline{\Caps##}\cr\noalign{\medskip}\cr#1\cr}}%
	\copy\TheTitle\TSkip\TSkip%
\def\next{#2}\ifx\next\empty\gdef\STitle{#1}\else\gdef\STitle{#2}\fi%
\def\next{#3}\ifx\next\empty%
    \else\setbox\TheTitle=\vbox{\baselineskip=20pt\let\par=\cr\obeylines%
    \halign{\centerline{\caps##} #3\cr}}\copy\TheTitle\TSkip\TSkip\fi%
\centerline{\caps #4}\TSkip\TSkip%
\def\next{#5}\ifx\next\empty\gdef\SAuthor{#4}\else\gdef\SAuthor{#5}\fi%
\ifx\received\empty\relax
    \else\centerline{\eightrm Received: \received}\fi%
\ifx\revised\empty\TSkip%
    \else\centerline{\eightrm Revised: \revised}\TSkip\fi%
\ifx\communicated\empty\relax
    \else\centerline{\eightrm Communicated by \communicated}\fi\TSkip\TSkip%
\catcode'015=5}}\def\Title{\obeylines\GetTitle}
\def\Abstract{\begingroup\narrower
    \parskip=\medskipamount\parindent=0pt{\caps Abstract. }}
\def\EndAbstract{\par\endgroup\TSkip}

\long\def\MSC#1\EndMSC{\def\arg{#1}\ifx\arg\empty\relax\else
     {\par\narrower\noindent%
     2000 Mathematics Subject Classification: #1\par}\fi}

\long\def\KEY#1\EndKEY{\def\arg{#1}\ifx\arg\empty\relax\else
	{\par\narrower\noindent Keywords and Phrases: #1\par}\fi\TSkip}

\newbox\TheAdd\def\Addresses{\vfill\copy\TheAdd\vfill
    \ifodd\number\lastpage\vfill\eject\phantom{.}\vfill\eject\fi}
{\obeylines\gdef\GetAddress #1
\Address #2 
\Address #3
\Address #4
\EndAddress
{\def\xs{6truecm}\parindent=0pt 
\setbox0=\vtop{{\obeylines\hsize=\xs#1\par}}\def\next{#2}
\ifx\next\empty 
     \setbox\TheAdd=\hbox to\hsize{\hfill\copy0\hfill}
\else\setbox1=\vtop{{\obeylines\hsize=\xs#2\par}}\def\next{#3}
\ifx\next\empty 
     \setbox\TheAdd=\hbox to\hsize{\hfill\copy0\hfill\copy1\hfill}
\else\setbox2=\vtop{{\obeylines\hsize=\xs#3\par}}\def\next{#4}
\ifx\next\empty\ 
     \setbox\TheAdd=\vtop{\hbox to\hsize{\hfill\copy0\hfill\copy1\hfill}
                \vskip20pt\hbox to\hsize{\hfill\copy2\hfill}}
\else\setbox3=\vtop{{\obeylines\hsize=\xs#4\par}}
     \setbox\TheAdd=\vtop{\hbox to\hsize{\hfill\copy0\hfill\copy1\hfill}
	        \vskip20pt\hbox to\hsize{\hfill\copy2\hfill\copy3\hfill}}
\fi\fi\fi\catcode'015=5}}\gdef\Address{\obeylines\GetAddress}

\hfuzz=0.1pt\tolerance=2000\emergencystretch=20pt\overfullrule=5pt


\usepackage{amssymb}
\usepackage{hyperref}
\usepackage[capitalize]{cleveref}
\usepackage{stmaryrd}

\numberwithin{equation}{section}
\newtheorem{thm}[equation]{Theorem}
\newtheorem{cor}[equation]{Corollary}
\newtheorem{lem}[equation]{Lemma}
\newtheorem*{lem*}{Lemma}
\newtheorem{prop}[equation]{Proposition}
\theoremstyle{definition}
\newtheorem{rem}[equation]{Remark}
\newtheorem{notation}[equation]{Notation}
\newtheorem{defn}[equation]{Definition}
\newtheorem{assump}[equation]{Assumption}
\newtheorem{eg}[equation]{Example}
\newtheorem*{ack}{Acknowledgements}

\crefname{thm}{theorem}{theorems}
\Crefname{thm}{Theorem}{Theorems}
\crefname{cor}{corollary}{corollaries}
\Crefname{cor}{Corollary}{Corollaries}
\crefname{lem}{lemma}{lemmas}
\Crefname{lem}{Lemma}{Lemmas}
\crefname{prop}{proposition}{propositions}
\Crefname{prop}{Proposition}{Propositions}
\crefname{rem}{remark}{remarks}
\Crefname{rem}{Remark}{Remarks}

\crefformat{thm}{Theorem~#2#1#3}
\crefformat{cor}{Corollary~#2#1#3}
\crefformat{lem}{Lemma~#2#1#3}
\crefformat{prop}{Proposition~#2#1#3}
\crefformat{rem}{Remark~#2#1#3}
\crefmultiformat{cor}{Corollaries~#2#1#3}{ and~#2#1#3}{, #2#1#3}{, and~#2#1#3}
\crefmultiformat{thm}{Theorems~#2#1#3}{ and~#2#1#3}{, #2#1#3}{, and~#2#1#3}
\crefmultiformat{lem}{Lemmas~#2#1#3}{ and~#2#1#3}{, #2#1#3}{, and~#2#1#3}
\crefmultiformat{rem}{Remarks~#2#1#3}{ and~#2#1#3}{, #2#1#3}{, and~#2#1#3}
\crefmultiformat{defn}{Definitions~#2#1#3}{ and~#2#1#3}{, #2#1#3}{, and~#2#1#3}

 \newcounter{case}
\newenvironment{case}{\refstepcounter{case}
 \medskip \noindent {\bf Case \textup{\thecase.  }}\it}{\upshape}
 \renewcommand{\thecase}{\arabic{case}}
\crefformat{case}{Case~#2#1#3}

\DeclareMathOperator{\Ad}{Ad}
\DeclareMathOperator{\Aut}{Aut}
\DeclareMathOperator{\GL}{GL}
\DeclareMathOperator{\Int}{Int}
\DeclareMathOperator{\rank}{rank}
\DeclareMathOperator{\SL}{SL}
\DeclareMathOperator{\SO}{SO}
\newcommand{\ints}{\mathcal{O}}
\newcommand{\dual}{\widehat}
\newcommand{\unitary}{\mathcal{U}}
\newcommand{\hilbert}{\mathcal{H}}
\newcommand{\conj}{\overline}
\newcommand{\cl}[1]{\mathrm{cl}\!\left(#1\right)}
\newcommand{\CC}{\mathbb{C}}
\newcommand{\NN}{\mathbb{N}}
\newcommand{\PP}{\mathbb{P}}
\newcommand{\RR}{\mathbb{R}}

\newcommand{\ZZ}{\mathbb{Z}}
\newcommand{\Lie}[1]{\mathfrak{#1}}

\usepackage{textcomp}

\newcommand{\comm}[2]{\llbracket#1,#2\rrbracket}

\newcommand{\fullcref}[2]{\cref{#1}\pref{#1-#2}}
\newcommand{\fullCref}[2]{\Cref{#1}\pref{#1-#2}}
\newcommand{\pref}[1]{\upshape(\ref{#1}\upshape)}
\newcommand{\csee}[1]{\upshape(see \cref{#1}\upshape)}
\newcommand{\cseebelow}[1]{\upshape(see \cref{#1} below\upshape)}
\newcommand{\ccf}[1]{\upshape(cf.~\cref{#1}\upshape)}

\newcommand{\relT}{relative Property~\textup{(}T\textup{)}}

\makeatletter
\newcommand{\noprelistbreak}{\smallskip\@nobreaktrue\nopagebreak} 
\makeatother

\usepackage{color}
\setlength{\marginparwidth}{40pt}
\setlength{\marginparsep}{10pt}
\newcommand{\refnote}[2][0]{\marginpar{%
	\color{blue}
	\vbox to 0pt{\vss
	$\begin{pmatrix} \text{see} \\[-3pt] \text{note} \\[-3pt] \text{\ref{#2}} \end{pmatrix}$%
	\vskip -1.1\baselineskip\vskip-#1pt}}}
\theoremstyle{definition}
\newtheorem{aid}{}
\numberwithin{aid}{section}
\newcommand{\oldaid}{}
\let\oldaid=\aid
\renewcommand{\aid}{\vfill\oldaid}
\newcommand{\oldendaid}{}
\let\oldendaid=\endaid
\renewcommand{\endaid}{\oldendaid\vfill\hrule width\textwidth \filbreak\bigskip}
\crefformat{aid}{Note~#2#1#3}

\begin{document}
\Title Relative Property (T) for Nilpotent Subgroups
{21 June 2017}
\ShortTitle Relative Property (T) for Nilpotent Subgroups
\SubTitle  
\Author Indira Chatterji, Dave Witte Morris, and Riddhi Shah
\ShortAuthor I.~Chatterji, D.~W.~Morris, and R.~Shah
\EndTitle
\Abstract 
We show that relative Property (T) for the abelianization of a nilpotent normal subgroup implies relative Property (T) for the subgroup itself. This and other results are a consequence of a theorem of independent interest, which states that if $H$ is a closed subgroup of a locally compact group~$G$, and $A$ is a closed subgroup of the center of~$H$, such that $A$ is normal in~$G$, and $(G/A, H/A)$ has relative Property (T), then $(G, H^{(1)})$ has relative Property (T), where $H^{(1)}$ is the closure of the commutator subgroup of~$H$. 
In fact, the assumption that $A$ is in the center of~$H$ can be replaced with the weaker assumption that $A$~is abelian and every $H$-invariant finite measure on the unitary dual of~$A$ is supported on the set of fixed points.
\EndAbstract
\MSC 
22D10.
\EndMSC
\KEY 
relative Property (T), 
nilpotent subgroup, almost-invariant vector,  fibered tensor product.
\EndKEY
\Address Laboratoire de Math\'ematiques 
	\qquad J.A. Dieudonn\'e
UMR no.~7351 CNRS UNS
Universit\'e de Nice-Sophia Antipolis
06108 Nice Cedex 02, France
indira.chatterji@math.cnrs.fr
\Address Department of Math and Comp Sci
University of Lethbridge
Lethbridge, Alberta T1K~3M4, Canada
dave.morris@uleth.ca
\Address School of Physical Sciences,
 Jawaharlal Nehru University
  New Delhi 110 067, India
  riddhi.kausti@gmail.com
\Address
\EndAddress

\thispagestyle{empty} 
\makeatletter\setlength\parindent {20\p@}\makeatother  

\section{Introduction}

Relative Property~(T) is an analogue of Kazhdan's Property~(T) for pairs $(G,H)$, where $H$~is a closed subgroup of the locally compact group~$G$. More precisely, $(G,H)$ has \emph{\relT} if every unitary representation of~$G$ with almost-invariant vectors has $H$-invariant vectors. (See \cref{RelTDefn}. Additional information can be found in \cite[pp.~41--43]{BHV}, \cite{Cornulier}, and \cite{Jaudon}.) This concept has proved useful for many purposes, including the study of finitely-additive measures on Euclidean spaces \cite{Margulis-FinitelyAdditive}, the construction of $\mathrm{II}_1$~factors with trivial fundamental group \cite{Popa}, the construction of new examples of groups with Kazhdan's Property~(T) that satisfy the Baum-Connes Conjecture \cite{Valette-GroupPairs}, and proving that particular groups have Kazhdan's Property~(T). 
In particular, the usual proof that $\SL(3,\RR)$ has Kazhdan's Property~(T) is based on the fact that the pair $\bigl( \SL(2,\RR) \ltimes \RR^2, \RR^2 \bigr)$ has \relT\ \cite[pp.~47--50]{BHV}.

The very basic case where the subgroup~$H$ is abelian and normal has been a focus of attention (see, for example, \cite{CornulierTessera,CornulierValette,Fernos,Iozzi,Valette-GroupPairs} and \cite[Lem.~3.1]{Wang}). We generalize the results that were obtained in this situation by allowing $H$ to be nilpotent, rather than abelian.
Indeed, the following \lcnamecref{Nilpotent/N1} provides a nilpotent analogue of any result that establishes \relT\ for abelian, normal subgroups.

\begin{notation}
For any topological group~$N$, we let $N^{(1)} = \cl{[N,N]}$ be the closure of the commutator subgroup of~$N$, and let $N^{ab} = N/N^{(1)}$ be the abelianization of~$N$.
\end{notation}

\begin{thm} \label{Nilpotent/N1}
Let $N$ be a closed, nilpotent, normal subgroup of a locally compact group~$G$.
 Then $(G,N)$ has \relT\ if and only if $(G/N^{(1)}, N^{ab})$ has \relT.
\end{thm}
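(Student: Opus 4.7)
The proof splits into two directions, with the converse being the substantive one.

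\textbf{Forward direction ($\Rightarrow$).} This is a routine application of the fact that relative Property~(T) descends to quotients. Since $N^{(1)}$ is characteristic in $N$ and $N$ is normal in~$G$, $N^{(1)}$ is closed and normal in~$G$, so the assumption that $(G,N)$ has \relT\ immediately yields that $(G/N^{(1)}, N/N^{(1)}) = (G/N^{(1)}, N^{ab})$ has \relT.

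\textbf{Converse ($\Leftarrow$).} I would induct on the nilpotency class $c$ of~$N$, computed with the closed lower central series $\gamma_1(N) := N$ and $\gamma_{k+1}(N) := \cl{[N,\gamma_k(N)]}$. The base case $c \le 1$ is trivial, since then $N^{(1)} = \{e\}$ and $N^{ab} = N$. For $c \ge 2$, let $Z := \gamma_c(N)$. Then $Z$ is closed, it is characteristic in~$N$ and hence normal in~$G$, and it is central in~$N$ because $[N,Z] \subseteq \gamma_{c+1}(N) = \{e\}$. Set $\tilde G := G/Z$ and $\tilde N := N/Z$; then $\tilde N$ is closed, nilpotent, normal in~$\tilde G$, and of class $c-1$. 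Moreover $\tilde N^{(1)} = N^{(1)}/Z$, so $\tilde N^{ab} \cong N^{ab}$ and $\tilde G / \tilde N^{(1)} \cong G/N^{(1)}$; the hypothesis of the theorem for $(G,N)$ therefore translates verbatim into the hypothesis for $(\tilde G, \tilde N)$, and the inductive hypothesis yields that $(G/Z, N/Z)$ has \relT.

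Now I would invoke the main theorem stated in the abstract with $H := N$ and $A := Z$: we have just verified that $Z$ is central in~$H$, normal in~$G$, and that $(G/A, H/A)$ has \relT, so we conclude that $(G, N^{(1)}) = (G, H^{(1)})$ has \relT. Combining this with the hypothesis that $(G/N^{(1)}, N/N^{(1)})$ has \relT\ via the standard extension principle---if $K \trianglelefteq G$ is closed with $K \subseteq H$ and both $(G,K)$ and $(G/K, H/K)$ have \relT, then $(G,H)$ has \relT---applied with $K = N^{(1)}$ and $H = N$ gives the desired conclusion that $(G,N)$ has \relT.

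\textbf{Main obstacle.} The entire conceptual content of the converse is carried by the abstract's main theorem; this \lcnamecref{Nilpotent/N1} is then a clean induction along the closed lower central series, and the only housekeeping is verifying closedness, centrality, and normality of $\gamma_c(N)$, which are immediate from the closed definition of the series. The extension principle invoked at the end is a well-known stability property of \relT, so no new technical work is required beyond the main theorem itself.
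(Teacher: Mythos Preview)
Your proposal is correct and follows essentially the same approach as the paper: the paper's proof is simply ``($\Rightarrow$) is obvious; ($\Leftarrow$) follows easily from \cref{relT(GH1)} by induction on the nilpotence class of~$N$,'' and you have accurately spelled out that induction, taking $A = \gamma_c(N)$ as the central subgroup, applying \cref{relT(GH1)} to obtain $(G,N^{(1)})$ has \relT, and then combining with the hypothesis via the extension principle. The only cosmetic difference is that you quote the ``main theorem stated in the abstract,'' whereas the paper explicitly names \cref{relT(GH1)} (the special case where $A$ is central in~$H$), which is all that is needed here.
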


As an example, consider a semidirect product $H \ltimes A$, where $A$~is abelian. Y.~Cornulier and R.~Tessera \cite{CornulierTessera} have characterized precisely when the pair $(H \ltimes A, A)$ has \relT, so the \lcnamecref{Nilpotent/N1} yields a characterization for pairs $(H \ltimes N, N)$, where $N$~is nilpotent. The following \lcnamecref{SemiNilpotent} is a special case that is in a particularly usable form, and is based on work of Y.~Cornulier and A.~Valette \cite{CornulierValette}.

\begin{notation} \label{IntOnAbel}
Assume the locally compact group~$H$ acts on a $1$-connected, nilpotent Lie group~$N$, and $L$~is a closed, connected, $H$-invariant subgroup of~$N$, such that $[N,N] \subseteq L$. Then $N/L \cong \RR^n$ for some~$n$, so the action of~$H$ induces a homomorphism $\Int_{N/L} \colon H \to \GL(n,\RR)$. 
 We use $\Int_{N/L}(H)^\bullet$ to denote the \emph{closure} of the image of this homomorphism.
\end{notation}

\begin{cor} \label{SemiNilpotent}
Assume the locally compact group~$H$ acts on a $1$-connected, nilpotent Lie group~$N$. The pair $(H \ltimes N, N)$ has \relT\ if and only if, for every closed, connected, $H$-invariant, proper subgroup~$L$ of~$N$ that contains~$N^{(1)}$, the group $\Int_{N/L}(H)^\bullet$ is \underline{not} amenable. 
\end{cor}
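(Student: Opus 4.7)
The plan is to deduce \Cref{SemiNilpotent} from \Cref{Nilpotent/N1} combined with the known characterization of \relT\ for semidirect products $H \ltimes A$ with $A$ abelian. First, I would note that $N$ is a closed nilpotent normal subgroup of $G = H \ltimes N$, so \Cref{Nilpotent/N1} applies and tells us that $(H \ltimes N, N)$ has \relT\ if and only if $(G/N^{(1)}, N^{ab})$ has \relT. Since $N^{(1)}$ is also normal in $G$ (as it is characteristic in $N$), the quotient $G/N^{(1)}$ is naturally $H \ltimes N^{ab}$, so our task reduces to analyzing \relT\ of the pair $(H \ltimes N^{ab}, N^{ab})$. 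Because $N$ is $1$-connected nilpotent, $N^{ab} \cong \RR^d$ for some $d$, placing us squarely in the abelian vector group case.

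Second, I would set up the correspondence between the geometric data in the two formulations. Closed, connected, $H$-invariant, proper subgroups $L$ of $N$ that contain $N^{(1)}$ correspond bijectively to proper $H$-invariant vector subspaces $V := L/N^{(1)}$ of $N^{ab} \cong \RR^d$; under this bijection $N/L$ is naturally identified with $N^{ab}/V$, and the homomorphism $\Int_{N/L} \colon H \to \GL(n,\RR)$ of \Cref{IntOnAbel} coincides with the $H$-action on the quotient vector space $N^{ab}/V$. Hence the hypothesis in \Cref{SemiNilpotent} translates verbatim to: the closure of the image of $H$ in $\GL(N^{ab}/V)$ is nonamenable, for every proper $H$-invariant subspace $V$ of $N^{ab}$.

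Third, I would invoke the Cornulier--Valette characterization from \cite{CornulierValette} (in the refined form proved by Cornulier and Tessera~\cite{CornulierTessera}): for a locally compact group $H$ acting linearly on $\RR^d$, the pair $(H \ltimes \RR^d, \RR^d)$ has \relT\ if and only if, for every proper $H$-invariant subspace $V \subseteq \RR^d$, the closure of the image of $H$ in $\GL(\RR^d/V)$ is nonamenable. Combining this with the identifications above, together with the first step, yields both directions of \Cref{SemiNilpotent}.

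The main obstacle I anticipate is matching the \emph{precise} formulation of the abelian-case result I need with what is actually stated in \cite{CornulierValette} and \cite{CornulierTessera}: their theorems are usually phrased in terms of the nonexistence of $H$-invariant probability measures on the unitary dual $\widehat{\RR^d} \setminus \{0\}$, and a short dualization argument (passing from invariant measures on $\widehat{\RR^d}$ to invariant subspaces and amenable image groups, via the fact that the support of an invariant measure on projective space produces an invariant flag along which $H$ acts through an amenable quotient) is required to obtain the clean ``nonamenable image on every proper quotient'' statement used in \Cref{SemiNilpotent}. Beyond this bookkeeping, no new analytic input is needed; all the work is done by \Cref{Nilpotent/N1}.
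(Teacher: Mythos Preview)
Your proposal is correct. Reducing to the abelianization via \cref{Nilpotent/N1} and then quoting \cite[Prop.~2.2]{CornulierValette} for the abelian case is exactly the right idea, and your concern about the formulation is unwarranted: the equivalence $(\mathrm{i} \Leftrightarrow \mathrm{ii}')$ of that proposition is precisely the statement you need, in terms of nonamenable quotient actions, not invariant measures.

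The paper takes a slightly different route. It deduces \cref{SemiNilpotent} as the special case $(\ref{SemiEquiv-pair} \Leftrightarrow \ref{SemiEquiv-Amen})$ of \cref{SemiEquiv}, which is stated for arbitrary \emph{topological} groups~$H$, not just locally compact ones. Since \cref{Nilpotent/N1} requires $G = H \ltimes N$ to be locally compact, the paper cannot use it to go from $(H \ltimes N^{ab}, N^{ab})$ back up to $(H \ltimes N, N)$ in that generality. Instead it passes to a countable dense subgroup~$\Gamma$ of~$H$ (with the discrete topology), applies Raja's \cref{SemiTripleAmen} to the abelian triple $(\Gamma \ltimes N^{ab}, \Gamma, N^{ab})$, and then uses \cref{StrongT} to lift to the nilpotent triple $(\Gamma \ltimes N, \Gamma, N)$. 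This longer route also establishes the equivalence with the triple and finite-Kazhdan-set conditions \pref{SemiEquiv-triple}--\pref{SemiEquiv-FiniteSet}, which your argument does not directly yield. For the bare statement of \cref{SemiNilpotent}, however, your approach is the more economical one.
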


A special case of \cref{SemiNilpotent}, in which $H$ is a Lie group and other assumptions are also made, was proved in \cite[Prop.~4.1.4, p.~44]{HaagerupProperty}.

The above results are consequences of the following \lcnamecref{relT(GH1meas)}, which is of independent interest.

\begin{thm} \label{relT(GH1meas)}
Let $H$ be a closed subgroup of a locally compact group~$G$, and let $A$ be a closed, abelian subgroup of~$H$. Assume that $A$ is normal in~$G$, and that every $H$-invariant finite measure on the unitary dual~$\dual A$ is supported on the set of fixed points of~$H$. If $(G/A, H/A)$ has \relT, then $(G, H^{(1)})$ has \relT. 
\end{thm}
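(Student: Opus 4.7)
The plan is to reduce the claim, via a fibered tensor product construction, to the hypothesis on $(G/A, H/A)$, and then use the measure-theoretic hypothesis to upgrade $H$-invariance to $H^{(1)}$-invariance.

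Let $\pi\colon G\to\unitary(\hilbert)$ be a unitary representation with almost-$G$-invariant unit vectors $(v_n)$; the goal is to produce a nonzero $H^{(1)}$-invariant vector in $\hilbert$. Since $A$ is abelian, $\pi|_A$ has a direct integral decomposition $\hilbert=\int^\oplus_{\dual A}\hilbert_\chi\,d\mu(\chi)$, and normality of $A$ in $G$ gives $\pi(g)\hilbert_\chi=\hilbert_{g\cdot\chi}$, where $G$ acts on $\dual A$ by conjugation $(g\cdot\chi)(a)=\chi(g^{-1}ag)$. Form the fibered tensor product $\sigma=\pi\otimes_A\bar\pi$ on $\mathcal K=\int^\oplus_{\dual A}\hilbert_\chi\otimes\conj{\hilbert_\chi}\,d\mu(\chi)$, combining the within-fiber action of $\pi(g)\otimes\bar\pi(g)$ with the permutation of fibers: because $A$ acts on each fiber by the trivial character $\chi\cdot\conj\chi$, the representation $\sigma$ factors through $G/A$. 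To each $v\in\hilbert$ associate the diagonal vector $\tilde v\in\mathcal K$ with $\chi$-component $v_\chi\otimes\conj{v_\chi}/\|v_\chi\|$ (and $0$ where $v_\chi=0$); this assignment is norm-preserving and $G$-equivariant, so $(\tilde v_n)$ is almost-$G/A$-invariant in $\sigma$.

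Applying the hypothesis that $(G/A, H/A)$ has \relT\ yields a nonzero $H/A$-invariant vector $\xi=\int^\oplus\xi_\chi\,d\mu(\chi)\in\mathcal K$. The $H$-invariance of $\xi$ forces the finite positive measure $\nu$ on $\dual A$ given by $\nu(B)=\int_B\|\xi_\chi\|^2\,d\mu(\chi)$ to be $H$-invariant: for $f\in C_c(\dual A)$, the identity $\sigma(h)M_f\sigma(h)^{-1}=M_{f\circ h^{-1}}$ combined with $\sigma(h)\xi=\xi$ yields $\int(f\circ h^{-1})\,d\nu=\int f\,d\nu$. By the measure-theoretic hypothesis, $\nu$ is supported on the set $F\subseteq\dual A$ of $H$-fixed characters; equivalently $\xi_\chi=0$ for $\mu$-a.e.\ $\chi\notin F$, and every $\chi$ supporting $\xi$ is trivial on $\cl{[H,A]}$.

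The remaining, and hardest, step is to extract an actual $H^{(1)}$-invariant vector in $\hilbert$ from the nonzero $\xi$ concentrated over $F$. For $\chi\in F$, the isomorphism $\hilbert_\chi\otimes\conj{\hilbert_\chi}\cong\mathrm{HS}(\hilbert_\chi)$ turns $\xi_\chi$ into a nonzero $H$-equivariant Hilbert--Schmidt operator (since $\chi$ is $H$-fixed, $H$ preserves $\hilbert_\chi$), and the spectral projections of $\xi_\chi^*\xi_\chi$ produce nonzero finite-dimensional $H$-invariant subspaces $V_\chi\subseteq\hilbert_\chi$ on which $A$ acts as the scalar $\chi$ and $\cl{[H,A]}$ acts trivially. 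The argument concludes by producing a $1$-dimensional $H$-invariant subspace in such a $V_\chi$, i.e., a character of $H$ appearing in $\pi|_H$, which is automatically trivial on $H^{(1)}=\cl{[H,H]}$. This last step---passing from finite-dimensional $H$-invariant subspaces to genuine characters of $H$---is the main technical obstacle; it exploits that the closure of the image of $H$ in $\unitary(V_\chi)$ is a compact Lie group in which the image of $A$ is central, together with the special structure inherited from $\xi$ having been built as an $H$-invariant vector in the fibered tensor product associated to the almost-$G$-invariant sequence $(\tilde v_n)$.
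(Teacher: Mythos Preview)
Your overall architecture matches the paper's: form the fibered tensor product $\sigma=\pi\otimes_{\dual A}\conj\pi$ (which kills~$A$), push almost-invariant vectors through the diagonal map $v\mapsto\tilde v$, use \relT\ for $(G/A,H/A)$ to produce an $H$-invariant vector in~$\sigma$, use the measure hypothesis to localize it over the $H$-fixed characters, and then read off $H^{(1)}$-invariant vectors via the Hilbert--Schmidt identification. But your last paragraph contains a real gap, and your own phrasing (``the main technical obstacle'') shows you feel it.

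The problem is this. From \relT\ alone you only obtain \emph{some} nonzero $H$-invariant $\xi\in\mathcal K$, with no relation to the~$\tilde v_n$. Over a fixed character $\chi\in F$, the operator $\xi_\chi\in\mathrm{HS}(\hilbert_\chi)$ is then just an arbitrary $H$-intertwiner, and its spectral projections give $H$-invariant subspaces $V_\chi$ of some finite dimension~$d$. There is no reason whatsoever for $d=1$: take $H$ with a $d$-dimensional irreducible unitary representation~$\rho$ on~$V$; then $\mathrm{Id}_V\in V\otimes\conj V$ is $H$-invariant, its unique eigenspace is all of~$V$, and $H$ acts on it via~$\rho$, not via a character. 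Your appeal to ``the closure of the image of $H$ in $\unitary(V_\chi)$ is a compact Lie group'' does not help, since compact Lie groups are usually nonabelian, and the vague ``special structure inherited from~$\xi$'' is not an argument.

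The missing idea is \emph{approximation}. For pairs, \relT\ is equivalent to \relT\ with approximation (Jolissaint): one can arrange that the $H$-invariant vector $\xi$ is \emph{close} to~$\tilde v_n$ for $n$ large. Then, fiberwise, the Hilbert--Schmidt operator $T=\xi_\chi$ is close to the rank-one projection $P=v_{n,\chi}\otimes\conj{v_{n,\chi}}/\|v_{n,\chi}\|^2$. A short spectral estimate (this is the paper's \cref{MMinv}) shows that if $\|T-P\|_{HS}$ is small then the top eigenvalue of~$T$ exceeds~$1/2$ and hence is simple, so the top eigenspace \emph{is} one-dimensional, giving an $H$-invariant line~$\CC\eta$ and thus an $H^{(1)}$-invariant vector~$\eta$ that is moreover close to~$v_{n,\chi}$. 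Assembling these~$\eta$ over~$\chi$ (measurably, via a selection theorem) yields the desired $H^{(1)}$-invariant vector in~$\hilbert$. In short: replace ``yields a nonzero $H/A$-invariant vector~$\xi$'' by ``yields an $H/A$-invariant vector~$\xi$ with $\|\xi-\tilde v_n\|$ small'', and the obstacle dissolves.
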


The (easy) proof of \Cref{Nilpotent/N1} does not require the full generality of \cref{relT(GH1meas)}, but only the following special case in which $H$~acts trivially on~$A$.

\begin{cor} \label{relT(GH1)}
Let $H$ be a closed subgroup of a locally compact group~$G$, and let $A$ be a closed subgroup of the center of~$H$, such that $A$ is normal in~$G$. If $(G/A, H/A)$ has \relT, then $(G, H^{(1)})$ has \relT. 
\end{cor}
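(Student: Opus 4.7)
The plan is to derive \cref{relT(GH1)} as an immediate specialization of \cref{relT(GH1meas)}, so the entire argument is to verify that the centrality hypothesis $A \subseteq Z(H)$ forces the measure-theoretic hypothesis of \cref{relT(GH1meas)}.

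First I would unpack the action. Since $A$ is a closed subgroup of~$H$, the group~$H$ acts on~$A$ by conjugation; but the assumption $A \subseteq Z(H)$ means that $hah^{-1} = a$ for all $h \in H$ and $a \in A$, so this conjugation action is trivial. The dual action of~$H$ on the unitary dual~$\dual A$ is defined by pulling characters back along conjugation (i.e., $(h \cdot \chi)(a) = \chi(h^{-1} a h)$), so it too is trivial. Consequently, \emph{every} point of~$\dual A$ is a fixed point of~$H$.

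Next I would check the measure hypothesis. Because the fixed-point set of~$H$ in~$\dual A$ equals all of~$\dual A$, any measure on~$\dual A$ is automatically supported on the fixed-point set. In particular, every $H$-invariant finite measure on~$\dual A$ satisfies the condition in~\cref{relT(GH1meas)}, regardless of whether such invariant measures are abundant or scarce.

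Finally I would invoke \cref{relT(GH1meas)} directly. The remaining hypotheses are already in the statement of the \lcnamecref{relT(GH1)}: $A$ is closed, abelian (as a subgroup of the abelian group~$Z(H)$, which is abelian in the sense that elements of $A$ commute with each other since they all lie in $Z(H)$; more directly $A \subseteq Z(H)$ gives $[A,A]=1$), and normal in~$G$; and the pair $(G/A, H/A)$ has \relT. The conclusion of \cref{relT(GH1meas)} is exactly that $(G, H^{(1)})$ has \relT, which is what we want. There is no real obstacle here — the only thing to notice is the triviality of the $H$-action on~$\dual A$, which is an immediate consequence of centrality.
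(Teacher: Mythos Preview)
Your proposal is correct and matches the paper's own proof exactly: the paper simply says this is a special case of \cref{relT(GH1meas)}, with the observation (recorded as \fullcref{WhenInvMeas}{center}) that $A \subseteq Z(H)$ makes the $H$-action on~$\dual A$ trivial, so every point of~$\dual A$ is fixed and the measure hypothesis is vacuous.
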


\begin{rem} \label{relT(GH1)Serre}
The special case of \cref{relT(GH1)} in which $G = H$ is a well-known result of J.--P.~Serre that appears in \cite[Thm.~1.7.11, p.~66]{BHV}. More generally, the special case where $A$~is central in all of~$G$, not merely in~$H$, is a generalization of \cite[Prop.~3.1.3]{Cornulier}.
\end{rem}

Our methods also apply to \relT\ for triples, rather than pairs.

\begin{defn}[{}{\cite[Rem.~0.2.2, p.~3]{Jaudon}}] \label{RelTTriple}
Let $H$ and~$M$ be closed subgroups of a locally compact group~$G$. We say that the triple $(G,H,M)$ has \emph{\relT}, if for any unitary representation $\pi$ of~$G$, such that the restriction $\pi|_H$ has almost-invariant vectors, then there exist nonzero $\pi(M)$-invariant vectors.
\end{defn}

For example, we prove the following result, which was conjectured by C.~R.~E.~Raja \cite[Conjecture~1 of \S7]{Raja} in the special case where $N$ is required to be a connected Lie group (in addition to being nilpotent).

\begin{cor} \label{StrongT}
Suppose that $H$ and~$N$ are locally compact groups, such that $N$ is nilpotent and assume that $H$~acts on~$N$ by automorphisms. Then the triple $(H \ltimes N, H, N)$ has \relT\ if and only if the triple $( H \ltimes N^{ab}, H, N^{ab})$ has \relT.
\end{cor}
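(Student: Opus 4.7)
The forward direction is straightforward: given a representation $\pi$ of $H \ltimes N^{ab}$ with almost $H$-invariant vectors, the quotient map $q : H \ltimes N \twoheadrightarrow H \ltimes N^{ab}$ (whose kernel is the closed normal subgroup $N^{(1)}$) pulls $\pi$ back to a representation $\pi \circ q$ of $H \ltimes N$ with the same almost $H$-invariant vectors. The assumed triple rel T for $(H \ltimes N, H, N)$ then yields nonzero $N$-invariant vectors, which are $N^{ab}$-invariant for $\pi$.

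For the reverse direction I will induct on the nilpotency class $c$ of $N$; the base case $c = 1$ is vacuous. For the inductive step with $c \geq 2$, let $A$ be the last nontrivial term of the lower central series of $N$. Then $A \subseteq Z(N) \cap N^{(1)}$, $A$ is characteristic in $N$ (hence $H$-invariant and normal in $G := H \ltimes N$), and $(N/A)^{ab} = N^{ab}$. The inductive hypothesis applied to the class-$(c-1)$ group $N/A$ therefore yields that $(H \ltimes N/A, H, N/A)$ has rel T. Given a representation $\pi$ of $G$ with almost $H$-invariant vectors, I consider the closed $G$-invariant subspace $\hilbert_0$ of $N^{(1)}$-invariant vectors. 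The representation on $\hilbert_0$ factors through $G/N^{(1)} = H \ltimes N^{ab}$, so the assumed hypothesis of \cref{StrongT} produces $N^{ab}$-invariant (hence $N$-invariant) vectors in $\hilbert_0$ \emph{provided} $\hilbert_0$ contains almost $H$-invariant vectors. A standard projection argument reduces this to the auxiliary triple $(G, H, N^{(1)})$ rel T: if a sequence of almost $H$-invariant unit vectors in $\pi$ had vanishing projection onto $\hilbert_0$, then $\hilbert_0^\perp$ --- which has no $N^{(1)}$-invariants --- would carry almost $H$-invariant vectors, contradicting the triple rel T for $(G, H, N^{(1)})$.

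The proof of the auxiliary triple $(G, H, N^{(1)})$ rel T is the main obstacle. My plan is to invoke \cref{relT(GH1meas)}, taking its ``$H$'' to be $N$ and its ``$A$'' to be our $A$: because $A$ is central in $N$, the conjugation action of $N$ on $\hat A$ is trivial, so the measure condition holds automatically, and the hypothesis $(G/A, N/A)$ rel T follows from the inductive triple $(H \ltimes N/A, H, N/A)$ rel T (since triple implies pair). \cref{relT(GH1meas)} then delivers \emph{pair} rel T for $(G, N^{(1)})$. The delicate remaining step is to upgrade this pair conclusion to the triple statement, which only concerns almost $H$-invariance rather than almost $G$-invariance. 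I expect this upgrade to be the heart of the proof, and plan to carry it out by adapting the spectral-measure argument underlying \cref{relT(GH1meas)} to the triples setting: a putative sequence of almost $H$-invariant unit vectors of $\pi$ with no $N^{(1)}$-invariants would give rise to asymptotically $H$-invariant probability measures on $\hat A \setminus \{1\}$, and the commutator structure of $N$ links the $H$-action on $\hat A$ to the $H$-action on $\widehat{N^{ab}}$ in such a way that the triple hypothesis $(H \ltimes N^{ab}, H, N^{ab})$ rel T rules out any such measure, producing the desired contradiction.
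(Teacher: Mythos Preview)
Your inductive scaffolding is exactly the paper's: induct on the nilpotency class, take $A\subseteq Z(N)\cap N^{(1)}$, use the inductive hypothesis to get the triple $(G/A,H,N/A)$, reduce to proving the auxiliary triple $(G,H,N^{(1)})$, and then finish on the $N^{(1)}$-fixed subspace via the hypothesis on $N^{ab}$. That part is fine.

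The gap is precisely where you flag it. Invoking \cref{relT(GH1meas)} with its ``$H$'' equal to~$N$ only yields the \emph{pair} $(G,N^{(1)})$, and your proposed upgrade to the triple is not correct as sketched. Two concrete problems: first, almost $H$-invariant unit vectors with no $N^{(1)}$-invariants need not produce spectral measures supported on $\dual A\setminus\{1\}$ (absence of $N^{(1)}$-fixed vectors says nothing about $A$-fixed vectors); second, and more seriously, the input you try to feed in at this stage is wrong --- you attempt to leverage the original hypothesis $(H\ltimes N^{ab},H,N^{ab})$ via some ``commutator link'' between $\dual A$ and $\widehat{N^{ab}}$, but no such link does the job. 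The correct input here is the \emph{inductive} triple $(G/A,H,N/A)$, not the abelianized hypothesis.

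The paper already contains the triple version of \cref{relT(GH1meas)} that you are groping toward: it is \cref{TripleHasT}. Apply it with $M=N$ (so the measure condition is vacuous since $A\subseteq Z(N)$); the hypothesis $(G/A,HA/A,N/A)$ with approximation comes from the inductive triple plus \cref{NearInvariantForNormal}, and the conclusion is exactly $(G,H,N^{(1)})$ with approximation. The mechanism is not a measure-on-$\dual A$ argument but the fibered tensor product $\pi\otimes_{\dual A}\conj\pi$ of \cref{FiberSect}: it kills $A$, carries almost $H$-invariant vectors of $\pi$ to almost $H$-invariant vectors of a $G/A$-representation (\cref{TensorAlmInv}), the inductive triple then produces $N/A$-invariant vectors there, and \cref{MMinv} converts these back to $N^{(1)}$-invariant vectors for~$\pi$. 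Once you have $(G,H,N^{(1)})$ with approximation, your projection-to-$\hilbert_0$ step goes through, and the hypothesis on $N^{ab}$ finishes the proof exactly as you describe.
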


A modified version of \cref{relT(GH1meas)} also yields a classification of Kazhdan sets in some groups.

\begin{defn}
A subset~$Q$ of a locally compact group~$G$ is a \emph{Kazhdan set} for~$G$ if there exists $\epsilon > 0$, such that every unitary representation of~$G$ with a nonzero $(Q,\epsilon)$-invariant vector has a nonzero invariant vector. (See \fullcref{RelTDefn}{Qe} for the definition of a $(Q,\epsilon)$-invariant vector.)
\end{defn}


C.~Badea and S.~Grivaux \cite[Thm.~8.4]{BadeaGrivaux} obtained a Fourier-analytic characterization of Kazhdan sets in abelian groups (that are locally compact).
The following \lcnamecref{KazhdanSetForNilp} extends this to two other classes of groups.
(The special case where $G$~is a Heisenberg group was proved by C.~Badea and S.~Grivaux \cite[Thm.~8.12]{BadeaGrivaux}.)

\begin{defn}
A connected Lie group~$G$ is \emph{real split} if every eigenvalue of $\mathop{\mathrm{Ad}} g$ is real, for every $g \in G$. For example, every connected, nilpotent Lie group is real split.
\end{defn}

\begin{cor} \label{KazhdanSetForNilp}
Let $G$ be a locally compact group that either is nilpotent or is a connected, real split, solvable Lie group.
Then a subset~$Q$ of~$G$ is a Kazhdan set for~$G$ if and only if the image of~$Q$ in $G^{ab}$ is a Kazhdan set for~$G^{ab}$.
\end{cor}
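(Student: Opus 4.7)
\emph{Easy direction.} The implication ``$Q$ Kazhdan for $G$ $\Rightarrow$ $q(Q)$ Kazhdan for $G^{ab}$'' (with $q\colon G\to G^{ab}$ the quotient) is formal: if $\sigma$ is a unitary representation of $G^{ab}$ with a $(q(Q),\epsilon)$-invariant unit vector, then the pull-back $\sigma\circ q$ is a unitary representation of $G$ for which the same vector is $(Q,\epsilon)$-invariant; applying the hypothesis to the pull-back produces a nonzero $G$-invariant vector, which is automatically $\sigma$-invariant. So I focus on the converse.

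\emph{Strategy for the converse.} The plan is to establish a Kazhdan-set analogue of \cref{relT(GH1meas)} --- the ``modified version'' alluded to just before the statement of \cref{KazhdanSetForNilp} --- and to iterate it along a central or derived chain ending at $G^{ab}$. The key lemma I aim to prove is: \emph{let $A$ be a closed abelian subgroup of $G$ that is normal in $G$ and such that every $G$-invariant finite measure on $\widehat A$ is supported on the set of fixed points of $G$; then for every $\epsilon>0$ there is $\epsilon'>0$ such that any unitary representation $\pi$ of $G$ with a $(Q,\epsilon')$-invariant unit vector has a $(q_A(Q),\epsilon)$-invariant unit vector in its subspace $\hilbert^A$ of $A$-invariant vectors}, where $q_A\colon G\to G/A$ and $\hilbert^A$ is $G$-stable because $A$ is normal in~$G$. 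The proof parallels that of \cref{relT(GH1meas)}: decompose $\pi|_A$ as a direct integral over $\widehat A$, and use the measure hypothesis to quantify the statement that an almost-invariant vector must concentrate on the $G$-fixed part of $\widehat A$, namely $\hilbert^A$.

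\emph{Induction.} Granted the lemma, I induct on nilpotency class (nilpotent case) or on solvability degree (real-split solvable Lie case); the base case is that $G$ is abelian, where the statement is tautological. For the inductive step, let $A$ be the last nontrivial term of the lower central series (nilpotent case) or of the derived series (solvable case). Then $A$ is closed, abelian, normal in $G$, and contained in $G^{(1)}$, so $(G/A)^{ab}=G^{ab}$, and $G/A$ remains of the same type but with one fewer term in its series (in the solvable case, real-splitness passes to $G/A$ because $\Ad_{G/A}(\bar g)$ on $\Lie{g}/\Lie{a}$ is induced from $\Ad_G(g)$ and hence has real eigenvalues). If $q(Q)$ is Kazhdan for $G^{ab}$, the inductive hypothesis gives that $q_A(Q)$ is Kazhdan for $G/A$; combined with the lemma, this shows that $Q$ is Kazhdan for $G$.

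\emph{Verifying the measure hypothesis and main obstacle.} In the nilpotent case, $A$ satisfies $[G,A]=\{e\}$ and is therefore central in $G$, so $G$ acts trivially on $\widehat A$ and the hypothesis is automatic. In the real-split solvable Lie case, $A\cong\RR^k$, and the conjugation action of $G$ on $A$ --- and hence the dual action on $\widehat A\cong\RR^k$ --- is by real-split linear transformations; a standard Poincar\'e-recurrence argument shows that for any single real-split $T\in\GL(\widehat A)$, the expansion on the ${>}1$-eigenspace (and contraction on the ${<}1$-eigenspace) forces any finite $T$-invariant measure to be supported on the eigenvalue-one subspace $\mathrm{Fix}(T)$, so any $G$-invariant finite measure on $\widehat A$ is supported on $\bigcap_{g\in G}\mathrm{Fix}(g)=\mathrm{Fix}(G)$. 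The main obstacle is the quantitative lemma in the second paragraph: the dependence of $\epsilon'$ on $\epsilon$ must be tracked carefully through the direct-integral decomposition and the measure-concentration argument, whereas everything else is either formal or parallels \cref{relT(GH1meas)} directly.
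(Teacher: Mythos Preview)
Your inductive skeleton matches the paper's, but the key lemma is wrong as stated, because of a conflation in the phrase ``the $G$-fixed part of $\widehat A$, namely $\hilbert^A$''. These are different objects: $\hilbert^A$ consists of the vectors supported (in the spectral decomposition of $\pi|_A$) on the single point $\{1\}\subseteq\widehat A$, whereas the measure hypothesis only controls the $G$-fixed set $\widehat A^G$. In the nilpotent case your $A$ is central, so $\widehat A^G=\widehat A$ and ``concentration on $\widehat A^G$'' is vacuous --- it says nothing about $\hilbert^A$. Concretely, take $G$ the Heisenberg group and $A=Z(G)=[G,G]$; for any nontrivial Schr\"odinger representation $\pi_\hbar$ one has $\hilbert^A=0$, yet for a single element $g\notin A$ the operator $\pi_\hbar(g)$ has $1$ in its spectrum, so $\pi_\hbar$ has $(\{g\},\epsilon')$-invariant unit vectors for every $\epsilon'>0$. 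Thus your lemma fails (and no choice of $\epsilon'$ depending only on $\epsilon$ and the measure hypothesis can rescue it).

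What the paper actually does is different and essential: from a $(Q,\epsilon')$-invariant $f$ one passes to the fibered tensor product $\pi'=\pi\otimes_{\widehat A}\conj\pi$, a representation of $G/A$ (\cref{FiberDefn}, \cref{TensorAlmInv}); the inductive hypothesis that $q_A(Q)$ is Kazhdan for $G/A$ then produces a $G$-invariant vector in $\pi'$, and \cref{MMinv} converts this back to a $G^{(1)}$-invariant vector in~$\pi$. So the correct lemma (the ``modified version'' the paper refers to, spelled out in \cref{QuantitativeTripleHasT,KazhdanSetForG/A}) uses the Kazhdan hypothesis on $G/A$ as an input and outputs $G^{(1)}$-invariance, not $A$-invariance or concentration on $\hilbert^A$. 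Once you have $Q$ Kazhdan for the pair $(G,G^{(1)})$, you combine this directly with the assumption that the image of $Q$ in $G^{ab}=G/G^{(1)}$ is Kazhdan to finish. Your measure-hypothesis check in the real-split solvable case and your handling of the easy direction are fine.
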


\begin{rem} \label{GenToSubsets}
Y.~Cornulier \cite[p.~302]{Cornulier} has generalized the notion of \relT\ to pairs $(G,H)$ in which $H$ is a sub\emph{set} of~$G$, rather than a sub\emph{group}. \Cref{relT(GH1)} extends to this setting in the obvious way \csee{relT(GH1)Subset}, but the hypotheses of the corresponding generalization of \cref{relT(GH1meas)} are not as clean \ccf{TripleHasTSubset}. 
\end{rem}

Other consequences of \cref{relT(GH1meas)} can be found in \Cref{IntroProofSect,GeneralizeSemiNilpotentSect,OtherSect}.

Here is an outline of the paper. \Cref{relTriples} establishes some notation and recalls (or proves) several basic facts about \relT, introducing the notion of \relT\ with approximation. \Cref{tensor} defines a tensor product that is fibered over the eigenspaces of an abelian normal subgroup, and discusses the associated invariant or almost-invariant vectors. 
\Cref{MainPfSect} uses the results of \cref{relTriples,tensor} to give a short proof of a generalization of \cref{relT(GH1meas)} that applies to triples, rather than pairs. (The \lcnamecref{MainPfSect} also proves a slightly different result that also implies \cref{Nilpotent/N1}.)
\Cref{IntroProofSect} uses \cref{relT(GH1meas)} (and its generalizations) to prove the other results stated in the above introduction (plus some related results).
\Cref{LargestRelTSect} shows that if $N$ is compactly generated, and nilpotent, then it has a unique largest subgroup~$L^\dagger$, such that $(G,L^\dagger)$ has \relT.
\Cref{GeneralizeSemiNilpotentSect} proves a generalization of \cref{SemiNilpotent} that does not require the subgroup~$N$ to be a Lie group.
\Cref{SubsetSect} presents results on \relT\ for triples $(G,H,M)$ in which the subset~$M$ is not required to be a subgroup. 
Finally, \cref{OtherSect} records a few other observations about \relT.

\begin{ack}
I.~Chatterji is partially supported by the Institut Universitaire de France (IUF) and ANR Gamme.
R.~Shah would like to thank the National Board for Higher Mathematics (NBHM), DAE, Government of India for a research grant.
We thank the Mathematical Sciences Research Institute (Berkeley, California) for its hospitality, which facilitated this collaboration. We also thank Bachir Bekka, Marc Burger, S.~G.~Dani, Talia Fern\'os, and Alain Valette for discussions on an early attempt of the result. 
The results in \cref{SubsetSect} were prompted by an anonymous referee's suggestion to investigate whether our techniques apply to \relT\ for subsets, not just subgroups.
\end{ack}

\section{Relative Property (T) for pairs and triples}\label{relTriples}

\begin{assump} \label{SecondCountable}
Hilbert spaces and locally compact groups are assumed to be second countable. (So all locally compact groups in this paper are $\sigma$-compact.)
\end{assump}

\begin{defn}[{}{\cite[Defns.~1.1.1 and 1.4.3, pp.~28 and 41]{BHV}}] \label{RelTDefn}
Let $\pi$ be a unitary representation of a locally compact group~$G$ on a Hilbert space~$\hilbert$, and let $H$ be a closed subgroup of~$G$.
	\begin{enumerate}
	\item \label{RelTDefn-Qe}
	For a subset~$Q$ of~$G$ and $\epsilon > 0$, a vector $\xi \in \hilbert$ is \emph{$(Q,\epsilon)$-invariant} if $\| \pi(g) \xi - \xi \| \le \epsilon \|\xi\|$ for all $g \in Q$.
	\item $\pi$ has \emph{almost-invariant vectors} if $\pi$~has nonzero $(Q,\epsilon)$-invariant vectors, for every compact $Q \subseteq G$ and $\epsilon > 0$.
	\item The pair $(G,H)$ has \emph{\relT} if every unitary representation of~$G$ that has almost-invariant vectors, also has nonzero $H$-invariant vectors.
	\end{enumerate}
\end{defn}

If the pair $(G,H)$ has \relT, then $(Q,\epsilon)$-invariant vectors can be approximated by $H$-invariant vectors:

\begin{thm}[Jolissaint {\cite[Thm.~1.2 (a2 $\Rightarrow$ b2)]{Jolissaint}}] \label{NearInvariantForPair}
Assume $H$ is a closed subgroup of a locally compact group~$G$, such that $(G,H)$ has \relT. Then, for every $\delta > 0$, there exist a compact subset~$Q$ of~$G$, and $\epsilon > 0$, such that if $\pi$~is any unitary representation of~$G$ on a Hilbert space~$\hilbert$, and $\xi$~is a nonzero $(Q,\epsilon)$-invariant vector in~$\hilbert$, then $\|\xi - \eta\| < \delta \|\xi\|$, for some $H$-invariant vector $\eta \in \hilbert$.
\end{thm}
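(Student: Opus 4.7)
The plan is to argue by contradiction, building a unitary representation of $G$ that has almost-invariant vectors yet no nonzero $H$-invariant vectors --- directly violating \relT. Suppose the conclusion fails for some $\delta>0$. Using the $\sigma$-compactness of $G$ (\cref{SecondCountable}), pick compact subsets $Q_1\subseteq Q_2\subseteq\cdots$ exhausting $G$ and a sequence $\epsilon_n\downarrow 0$. The hypothesis supplies, for each $n$, a unitary representation $(\pi_n,\hilbert_n)$ of $G$ and a unit $(Q_n,\epsilon_n)$-invariant vector $\xi_n\in\hilbert_n$ with $\|\xi_n-\eta\|\geq\delta$ for every $H$-invariant $\eta$. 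Writing $P_n$ for the orthogonal projection of $\hilbert_n$ onto its $H$-fixed subspace $\hilbert_n^H$, this is equivalent to $\|\xi_n-P_n\xi_n\|\geq\delta$.

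The most immediate candidate witness, $\pi:=\bigoplus_n\pi_n$ with the $\xi_n$ placed in their respective summands, has almost-invariant vectors, so by \relT\ it possesses a nonzero $H$-invariant vector. This is not yet a contradiction: the $H$-fixed subspace of $\pi$ decomposes as $\bigoplus_n\hilbert_n^H$, and individual summands may well be nonzero. To sharpen, I would replace each $\pi_n$ by a subrepresentation that eliminates the $H$-invariants without destroying the bad almost-invariant vector. The unit vector $\eta_n:=(\xi_n-P_n\xi_n)/\|\xi_n-P_n\xi_n\|$ lies in $(\hilbert_n^H)^\perp$, and because $P_n\xi_n$ is $H$-fixed, the identity $\pi_n(h)\xi_n-\xi_n=\pi_n(h)(\xi_n-P_n\xi_n)-(\xi_n-P_n\xi_n)$ for $h\in H$ shows that $\eta_n$ is $(Q_n\cap H,\epsilon_n/\delta)$-invariant under the $H$-action. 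The plan is then to find a closed $G$-invariant subspace $\hilbert_n'\subseteq\hilbert_n$ containing $\eta_n$ with $\hilbert_n'\cap\hilbert_n^H=0$, so that the restrictions $\pi_n':=\pi_n|_{\hilbert_n'}$ have no nonzero $H$-invariant vectors and the direct sum $\bigoplus_n\pi_n'$ supplies the desired contradiction.

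The main obstacle is producing such an $\hilbert_n'$: the naive candidate --- the closed $G$-span of $\eta_n$ --- need not avoid $\hilbert_n^H$, since $\pi_n(g)$ for $g$ outside the normalizer of $H$ can carry $(\hilbert_n^H)^\perp$ back into $\hilbert_n^H$, and $\hilbert_n^H$ itself need not be $G$-invariant. The cleanest workaround I envisage is to pass to the tensor product $\pi_n\otimes\conj{\pi_n}$, acting by conjugation on the Hilbert--Schmidt operators $B_2(\hilbert_n)$: the vector $\xi_n\otimes\conj{\xi_n}$, identified with the rank-one projection onto $\CC\xi_n$, is a $(Q_n,2\epsilon_n)$-invariant unit vector, while the $H$-invariants of this representation are exactly the Hilbert--Schmidt operators commuting with $\pi_n(H)$. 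A direct-integral decomposition of $\pi_n|_H$ over $\dual H$ then converts the distance hypothesis $\|\xi_n-P_n\xi_n\|\geq\delta$ into a uniform positive lower bound on the distance from $\xi_n\otimes\conj{\xi_n}$ to the $H$-commuting subspace, after which the direct-sum argument closes out the contradiction. I expect making this spectral translation quantitative --- in particular, controlling the mixed contributions in the tensor product that prevent a clean factorization of norms --- to be the technically delicate heart of the proof.
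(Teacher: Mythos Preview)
The paper does not prove this theorem; it is cited from Jolissaint. The standard argument (a version of which appears in the appendix as \cref{ApproxForPair}) goes through positive-definite functions rather than through a direct construction of a representation with no $H$-invariant vectors. Briefly: if the conclusion fails for some $\delta>0$, one obtains unit vectors $\xi_n$ that are $(Q_n,\epsilon_n)$-invariant but satisfy $\|\xi_n-P_n\xi_n\|\geq\delta$; the associated diagonal matrix coefficients $\varphi_n(g)=\langle\pi_n(g)\xi_n,\xi_n\rangle$ are normalized positive-definite functions converging to $1$ uniformly on compact sets, and one checks that the distance condition forces $\varphi_n$ \emph{not} to converge to $1$ uniformly on $H$. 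This contradicts the positive-definite-function characterization of \relT\ (see \cite[Thm.~1.1]{Cornulier} or \cite[Thm.~1.2]{Jolissaint}). The point is that \relT\ is already known to be equivalent to this Fell-topology statement, so one never needs to manufacture a representation with no $H$-invariant vectors at all.

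Your tensor-product workaround has a genuine gap, and in fact cannot work as stated. Consider the simplest obstruction: suppose $\pi_n(h)\xi_n=\chi(h)\xi_n$ for a nontrivial character $\chi$ of $H$. Then $P_n\xi_n=0$, so $\|\xi_n-P_n\xi_n\|=1\geq\delta$, yet the rank-one projection $\xi_n\otimes\conj{\xi_n}$ onto $\CC\xi_n$ \emph{commutes} with $\pi_n(H)$ and is therefore exactly $H$-invariant in $\pi_n\otimes\conj{\pi_n}$. So the distance hypothesis on $\xi_n$ does not transfer to any positive lower bound on the distance from $\xi_n\otimes\conj{\xi_n}$ to the $H$-commutant; the tensor product only sees whether the \emph{line} $\CC\xi_n$ is $H$-invariant, not whether $\xi_n$ itself is. (This is precisely why \cref{MMinv} in the paper yields only $M^{(1)}$-invariance, not $M$-invariance.) No amount of direct-integral bookkeeping will repair this, because the phenomenon already occurs for one-dimensional $\hilbert_n$. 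You correctly identified that the naive $G$-span argument fails when $H$ is not normal; the resolution is not a cleverer subspace but a different invariant --- the matrix coefficient --- that records phase information the tensor product discards.
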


This result does not extend to triples with \relT, because the following is an example in which the triple $(G,H,M)$ has \relT, but there are almost-invariant vectors for~$H$ that cannot be approximated by $M$-invariant vectors. 

\begin{eg} \label{TripleNotApprox}
Let $G = \mathrm{O}(n) \ltimes \RR^n$, and let $H$ and~$M$ be the stabilizers in~$G$ of two different points~$x$ and~$y$ in~$\RR^n$ (so $H$ and~$M$ are two different conjugates of~$\mathrm{O}(n)$). Then it is not difficult to see that $(G,H,M)$ has \relT. (Namely, note that $H$ has Property~(T), because it is compact, and that every representation of~$G$ with an $H$-invariant vector must also have an $M$-invariant vector, because $M$ is conjugate to~$H$.)

Let $\pi$ be the natural representation of~$G$ on $L^2(\RR^n)$. There is a nonzero $H$-invariant function~$\xi$ in $L^2(\RR^n)$ whose support is contained in a small disk centered at~$x$ (small enough that the disk does not contain~$y$). Then $\xi$~is $(Q,\epsilon)$-invariant for every $Q \subseteq H$ and $\epsilon > 0$, but $\xi$ is not well approximated by any $M$-invariant function.
\end{eg}

This observation motivates the following definition, which identifies the cases where the approximation is always possible:

\begin{defn} \label{RelTApproxTriple}
Let $H$ and~$M$ be closed subgroups of a locally compact group~$G$. We say that the triple $(G,H,M)$ has \emph{\relT\ with approximation} if, for every $\delta > 0$, there exist a compact subset~$Q$ of~$H$ and $\epsilon > 0$, such that if $\xi$~is any $(Q,\epsilon)$-invariant vector of any unitary representation of~$G$, then there is an $M$-invariant vector~$\eta$, such that $\| \eta - \xi \| \le \delta \|\xi\|$.
\end{defn}

It is obvious that \relT\ with approximation implies \relT.  The converse is not true, as Example \ref{TripleNotApprox} gives a triple that has \relT\ but not \relT\ with approximation. However, \cref{NearInvariantForPair} tells us that the two properties are equivalent when $G = H$. They are also equivalent when the third group in the triple is normal:

\begin{lem} \label{NearInvariantForNormal}
Assume $H$ and $M$ are closed subgroups of a locally compact group~$G$, such that $(G,H,M)$ has \relT. If $M$ is normal in~$G$, then $(G,H,M)$ has \relT\ with approximation.
\end{lem}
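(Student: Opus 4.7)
My plan is to convert the qualitative relative Property (T) for the triple into a single quantitative constant, and then exploit normality of $M$ to transfer the constant through the orthogonal projection onto the subspace of $M$-invariant vectors. The crucial structural fact is that since $M\vartriangleleft G$, if $\pi$ is a unitary representation of~$G$ on~$\hilbert$, then the subspace $\hilbert^M$ of $M$-invariant vectors is $\pi(G)$-invariant (because $\pi(g^{-1})\hilbert^M$ is the space of vectors fixed by $\pi(gMg^{-1})=\pi(M)$). Hence $\hilbert=\hilbert^M\oplus(\hilbert^M)^\perp$ is a $\pi(G)$-invariant decomposition, and the orthogonal projection~$P$ onto $\hilbert^M$ commutes with~$\pi(g)$ for every $g\in G$.

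The first step is to prove a uniform strengthening of the triple's relative Property~(T): \emph{there exist a compact subset $Q_{0}\subseteq H$ and $\epsilon_{0}>0$ such that no unitary representation of~$G$ without nonzero $M$-invariant vectors has a nonzero $(Q_{0},\epsilon_{0})$-invariant vector}. Suppose this fails; by \cref{SecondCountable}, $H$ is $\sigma$-compact, so fix an increasing exhaustion $Q_{1}\subseteq Q_{2}\subseteq\cdots$ of~$H$ by compact sets. Then, for every~$n$, there is a unitary representation $(\pi_{n},\hilbert_{n})$ of~$G$ with $\hilbert_{n}^{M}=0$ that admits a unit vector $\xi_{n}$ which is $(Q_{n},1/n)$-invariant. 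The direct sum $\pi=\bigoplus_{n}\pi_{n}$ on $\bigoplus_{n}\hilbert_{n}$ has no nonzero $M$-invariant vector (each summand does not), but the sequence $(\xi_{n})$ shows that $\pi|_{H}$ has almost-invariant vectors. This contradicts the relative Property~(T) for $(G,H,M)$, proving the claim.

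For the main argument, fix $\delta>0$ and set $Q=Q_{0}$ and $\epsilon=\delta\epsilon_{0}$. Let $\xi$ be any $(Q,\epsilon)$-invariant vector in a unitary representation $(\pi,\hilbert)$ of~$G$, and put $\eta=P\xi\in\hilbert^{M}$. Since $P$ commutes with each $\pi(g)$, the vector $\xi':=\xi-\eta=(I-P)\xi$ lies in the $\pi(G)$-invariant subspace $(\hilbert^{M})^\perp$, and for every $g\in Q_{0}$,
\[
\|\pi(g)\xi'-\xi'\|=\bigl\|(I-P)\bigl(\pi(g)\xi-\xi\bigr)\bigr\|\le\|\pi(g)\xi-\xi\|\le\delta\epsilon_{0}\|\xi\|.
\]
If we had $\|\xi'\|>\delta\|\xi\|$, the last inequality would read $\|\pi(g)\xi'-\xi'\|<\epsilon_{0}\|\xi'\|$ for all $g\in Q_{0}$, exhibiting a nonzero $(Q_{0},\epsilon_{0})$-invariant vector in the subrepresentation on $(\hilbert^{M})^\perp$, which has no $M$-invariant vectors — contradicting the uniform version proved above. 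Therefore $\|\xi-\eta\|=\|\xi'\|\le\delta\|\xi\|$, as required.

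The only real obstacle is the direct-sum step in the uniform version; it requires $\sigma$-compactness of~$H$ to choose a countable exhaustion (so that the countable direct sum remains a well-defined unitary representation on a second-countable Hilbert space, consistent with \cref{SecondCountable}). Everything else is a routine consequence of normality of~$M$ via the commuting projection~$P$.
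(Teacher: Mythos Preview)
Your proof is correct and follows essentially the same approach as the paper: both extract a uniform Kazhdan pair $(Q_0,\epsilon_0)$ from the triple's relative Property~(T), use normality of~$M$ to make $\hilbert^M$ (and hence its complement) $G$-invariant, and then bound the component of~$\xi$ in $(\hilbert^M)^\perp$ by noting that this subrepresentation has no $M$-invariant vectors. The only difference is that you spell out the direct-sum argument for the uniform pair explicitly (using $\sigma$-compactness of~$H$), whereas the paper simply asserts it as standard with a reference.
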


\begin{proof}
This is a standard argument (cf.\ \cite[Prop.~1.1.9, p.~31]{BHV}). Let $\delta > 0$ be arbitrary.
Since $(G,H,M)$ has \relT, there exist a compact subset~$Q$ of~$H$ and $\epsilon' > 0$, such that every unitary representation of~$G$ with nonzero $(Q,\epsilon')$-invariant vectors has nonzero $M$-invariant vectors. 

Let $\epsilon = \delta \epsilon'/2$, and suppose that $\xi$ is a $(Q,\epsilon)$-invariant unit vector for a unitary representation~$\pi$ of~$G$ on a Hilbert space~$\hilbert$. We need to find an $M$-invariant vector~$\eta$ that is $\delta$-close to $\xi$.

Let $P \colon \hilbert \to (\hilbert^M)^\perp$ be the projection onto the orthogonal complement of the space of $M$-invariant vectors. We may assume $P(\xi) \neq 0$ (otherwise $\xi$ is invariant and we take $\eta = \xi$). Since $M$ is normal in~$G$, we know that $\hilbert^M$ is $G$-invariant, so $\pi$~restricts to a representation of~$G$ on $(\hilbert^M)^\perp$. For all $q \in Q$, we have
	$$ \| \pi(q) \, P(\xi) - P(\xi) \|
	= \|P \bigl( \pi(q) \xi - \xi \bigr) \|
	\le \| \pi(q) \xi - \xi \|
	\le \epsilon .$$
However, $P(\xi)$ cannot be $(Q,\epsilon')$-invariant, because $(\hilbert^M)^\perp$ has no nonzero $M$-invariant vectors. Therefore $\epsilon > \epsilon' \| P(\xi) \|$, which means 
	\begin{align*}
	\|P(\xi)\| &< \epsilon/\epsilon' = \delta/2 < \delta \|\xi\|
	. 
	\end{align*}
Hence $\eta = \xi - P(\xi) \neq 0$ is $M$-invariant and $\|\eta-\xi\|\leq\delta\|\xi\|$ as desired.
\end{proof}

It is immediate from the definitions that the pair $(G,H)$ has \relT\ if and only if the triple $(G,G,H)$ has \relT.
Now, suppose $M \subseteq H \subseteq G$. It is obvious that if the pair $(H,M)$ has \relT, then the triple $(G,H,M)$ has \relT. However, the converse is not true, even if $M$ is contained in~$H$ and is normal in~$G$:

\begin{eg} \label{TripleNotPair}
Fix $n \ge 4$, and embed $\SL(3,\RR)$ in $\SL(n,\RR)$, in such a way that $\SL(3,\RR)$ fixes a nonzero vector $v \in \RR^n$. Then 
	\begin{enumerate}
	\item \label{TripleNotPair-triple}
	the triple $\bigl( \SL(n,\RR) \ltimes \RR^n, \SL(3,\RR) \ltimes \RR^n, \RR^n \bigr)$ has \relT, 
	but 
	\item \label{TripleNotPair-pair}
	the pair $\bigl( \SL(3,\RR) \ltimes \RR^n, \RR^n \bigr)$ does not have \relT.
	\end{enumerate}
\end{eg}

\begin{proof}
\pref{TripleNotPair-triple} Let $\pi$ be a unitary representation of $\SL(n,\RR) \ltimes \RR^n$, such that the restriction of~$\pi$ to $\SL(3,\RR) \ltimes \RR^n$ has nonzero almost-invariant vectors. Since $\SL(3,\RR)$ has Property~(T) \cite[Thm,~1.4.15, p.~49]{BHV}, we know that $\pi$ has nonzero $\SL(3,\RR)$-invariant vectors. The Moore Ergodicity Theorem (or Mautner phenomenon) \cite[Cor.~11.2.8, p.~216]{Morris} tells us that every $\SL(3,\RR)$-invariant vector is $\SL(n,\RR)$-invariant. Since the triple $\bigl( \SL(n,\RR) \ltimes \RR^n, \SL(n,\RR), \RR^n \bigr)$ has \relT\ (see, for example, \cite[Thm.~1.1]{Raja}), these vectors are $\RR^n$-invariant.

\pref{TripleNotPair-pair} Since $\SL(3,\RR)$ fixes~$v$ (and $\SL(3,\RR)$ is simple, so its representation on~$\RR^n$ is completely reducible), we see that the abelianization of $\SL(3,\RR) \ltimes \RR^n$ is noncompact. So nontrivial $1$-dimensional representations of $\SL(3,\RR) \ltimes \RR^n$ approximate the trivial representation, and are trivial on $\SL(3,\RR)$, but have no $\RR^n$-invariant vectors.
\end{proof}

\section{Invariant vectors and tensor products} \label{tensor}
As was mentioned in the introduction, \cref{relT(GH1meas)} is a generalization of a theorem of Serre. The proof of Serre's result in \cite[Thm.~1.7.11, p.~66]{BHV} is based on the fact that if $A$~is central in~$G$, and $\pi$ is irreducible, then $\pi(A)$~consists of scalar matrices, so $A$~is in the kernel of $\pi \otimes \conj\pi$ (see \cref{ConjNotation} for the definition of the conjugate representation~$\conj\pi$).
To generalize this proof, we construct a different representation, denoted $\pi \otimes_{\dual A} \conj\pi$, that is trivial on~$A$, even if $\pi(A)$ does not consist of scalars \csee{FiberDefn}. 
In geometric terms, $\pi$~can be realized as an action on the $L^2$-sections of a vector bundle over the unitary dual of~$A$, and the representation $\pi \otimes_{\dual A} \conj\pi$ is constructed by tensoring this vector bundle with its conjugate. However, the official definition of $\pi \otimes_{\dual A} \conj\pi$ in \cref{FiberSect} uses the terminology of real analysis and representation theory, instead of the language of vector bundles.

For the proof of \cref{relT(GH1meas)}, it is important to know that almost-invariant vectors for~$\pi$ yield almost-invariant vectors for $\pi \otimes_{\dual A} \conj\pi$. That is the point of \cref{TensorAlmInv} below. Conversely, \cref{MMinv} will be used to obtain invariant vectors for~$\pi$ from invariant vectors for $\pi \otimes_{\dual A} \conj\pi$.

\begin{notation} \label{ConjNotation}
We use:
	\begin{itemize}
	\item $\conj{x}$ for the complex conjugate of the number~$x$,
	\item $\conj{\hilbert}$ for the conjugate of the Hilbert space~$\hilbert$ \cite[p.~293]{BHV},
	\item $\conj{\xi}$ for the element of~$\conj{\hilbert}$ corresponding to the element~$\xi$ of~$\hilbert$ (so $\conj{x \xi} = \conj{x} \, \conj{\xi}$ for $x \in \CC$),
	and
	\item $\conj{\pi}$ for the unitary representation on~$\conj{\hilbert}$ that is obtained from the unitary representation~$\pi$ on~$\hilbert$ \cite[Defn.~A.1.10, p.~294]{BHV}.
	\end{itemize}
\end{notation}

We begin by recalling some basic facts of functional analysis.

\begin{lem} \ \label{FuncAnal}
\noprelistbreak
	\begin{enumerate}
	\item \cite[\S3.4, pp.~42--49]{Weidmann}
	If $\hilbert_1$ and $\hilbert_2$ are Hilbert spaces, then there is a Hilbert space $\hilbert_1 \otimes \hilbert_2$, such that
                $$ \| v_1 \otimes v_2 \| = \|v_1\| \, \|v_2\|$$
         for all $v_1 \in \hilbert_1$ and $v_2 \in \hilbert_2$.
 
        \item \textup{(}cf.\ \cite[p.~267 and Thm.~3.12(b), p.~49]{Weidmann}\textup{)}
        If $U_1$ and $U_2$ are unitary operators on $\hilbert_1$ and $\hilbert_2$, respectively, then there is a unitary operator $U_1 \otimes U_2$ on $\hilbert_1 \otimes \hilbert_2$, such that
                $$(U_1 \otimes U_2) (v_1 \otimes v_2) = U_1 v_1 \otimes U_2 v_2 .$$
 
        \item \label{FuncAnal-TensorCont} 
        The natural map $\unitary(\hilbert_1) \times \unitary(\hilbert_2) \to \unitary(\hilbert_1 \otimes \hilbert_2)$ is continuous%
        \refnote{FuncAnal-TensorContPf}
        when the unitary groups are given the strong operator topology.
	\end{enumerate}
\end{lem}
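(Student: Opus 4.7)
The plan is to construct the Hilbert space tensor product via the standard algebraic-then-completion procedure, which handles parts~(1) and~(2), and then deduce the continuity statement~(3) from the uniform boundedness of unitary operators together with a density argument.

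For~(1), I would begin with the algebraic tensor product $\hilbert_1 \otimes_{\mathrm{alg}} \hilbert_2$ of $\CC$-vector spaces, and equip it with the sesquilinear form determined on elementary tensors by
\[
\langle v_1 \otimes v_2,\, w_1 \otimes w_2 \rangle = \langle v_1, w_1 \rangle_{\hilbert_1} \, \langle v_2, w_2 \rangle_{\hilbert_2}.
\]
Positive-definiteness is verified by expanding a general element $\sum_i v_1^{(i)} \otimes v_2^{(i)}$ against an orthonormal basis of the finite-dimensional span of $\{v_2^{(i)}\}$, which reduces the squared norm to a sum of squared norms in $\hilbert_1$. Completing gives the Hilbert space $\hilbert_1 \otimes \hilbert_2$, and the norm formula $\|v_1 \otimes v_2\| = \|v_1\|\,\|v_2\|$ is immediate from the definition of the inner product. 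For~(2), define $U_1 \otimes U_2$ on elementary tensors by the stated formula and extend linearly to $\hilbert_1 \otimes_{\mathrm{alg}} \hilbert_2$. A direct calculation shows it preserves the inner product there, hence extends to an isometry of the completion; applying the same construction to $U_1^{-1}$ and $U_2^{-1}$ exhibits its two-sided inverse, so it is in fact unitary.

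For~(3), let $U_1^{(n)} \to U_1$ and $U_2^{(n)} \to U_2$ in the strong operator topology (by our second countability assumption in \cref{SecondCountable}, it suffices to consider sequences). I would first check convergence on an elementary tensor $v_1 \otimes v_2$ via the identity
\[
\bigl(U_1^{(n)} \otimes U_2^{(n)} - U_1 \otimes U_2\bigr)(v_1 \otimes v_2) = \bigl(U_1^{(n)} v_1 - U_1 v_1\bigr) \otimes U_2^{(n)} v_2 + U_1 v_1 \otimes \bigl(U_2^{(n)} v_2 - U_2 v_2\bigr),
\]
and use the norm formula from~(1) to bound the right side by $\|U_1^{(n)} v_1 - U_1 v_1\|\cdot \|v_2\| + \|v_1\|\cdot \|U_2^{(n)} v_2 - U_2 v_2\|$, which tends to~$0$. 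Linearity then gives convergence on all of $\hilbert_1 \otimes_{\mathrm{alg}} \hilbert_2$. Since $U_i^{(n)}$ and $U_i$ are unitary, the differences $U_1^{(n)} \otimes U_2^{(n)} - U_1 \otimes U_2$ have operator norm at most~$2$, so convergence propagates from the dense subspace $\hilbert_1 \otimes_{\mathrm{alg}} \hilbert_2$ to all of $\hilbert_1 \otimes \hilbert_2$ by the usual three-epsilon argument.

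I do not expect any serious obstacle here; the arguments are entirely standard. The only points requiring a little care are the verification of positive-definiteness in~(1) (where a well-chosen basis avoids circularity), and, in~(3), keeping straight that the claim is continuity for the strong operator topology on the target, not the norm topology, for which joint continuity would fail.
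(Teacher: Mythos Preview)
Your proposal is correct and, for part~(3), follows essentially the same strategy as the paper: approximate an arbitrary vector by a finite sum of elementary tensors, verify strong convergence on those via the splitting identity, and pass to the full tensor product using the uniform bound on the operator norms. The paper simply cites Weidmann for parts~(1) and~(2), so your added detail there is fine but not needed.
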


\subsection{A fibered tensor product} \label{FiberSect}

\begin{notation}
Assume 
	\begin{itemize}
	\item $\pi$ is a unitary representation of a locally compact group~$G$,
	and
	\item $A$ is an abelian, normal subgroup of~$G$.
	\end{itemize}
\end{notation}

Applying the representation theory of abelian groups \cite[Thm.~D.3.1(i), p.~375]{BHV} to the restriction $\pi|_A$ provides a unique projection-valued measure~$\PP$ on the unitary dual~$\dual A$, such that, for $a \in A$, we have
	$$\pi(a) = \int_{\dual A} \lambda(a) \, d \mkern0.4\thinmuskip \PP(\lambda) .$$
The uniqueness implies that
        $$ \text{$\PP_{gE} = \pi(g) \, \PP_E \, \pi(g)^{-1}$ for $g \in G$ and $E \subseteq \dual A$} ,$$
so this is a system of imprimitivity for~$\pi$ (as defined in \cite[top of page 203]{Varadarajan}).  If this system of imprimitivity is homogeneous (as defined in \cite[p.~218]{Varadarajan}), then \cite[Theorem 6.11, pp. 220--221]{Varadarajan} tells us there is a measure~$\mu$ on~$\dual A$, a Hilbert space~$\hilbert$, and a Borel cocycle $\alpha \colon G \times \dual A \to \unitary(\hilbert)$, such that (up to isomorphism) $\pi$ is the representation on $L^2(\dual A, \mu ; \hilbert)$ given by 
	$$ \bigl( \pi(g) f \bigr)(\lambda) = \sqrt{D(g,\lambda)} \, \alpha(g,\lambda) \, f(g^{-1}\lambda) $$
for $g \in G$, $f \in L^2 ( \dual A, \mu ; \hilbert)$, and $\lambda \in \dual A$, and where $D(g,\lambda)$ is the Radon-Nikodym derivative of the action of~$g$ on~$\dual A$. 

\begin{defn} \label{FiberDefn}
With the above notation (and assuming that $\PP$ is homogeneous), we define $\pi' = \pi \otimes_{\dual A} \conj{\pi}$ to be the unitary representation of~$G$ on $L^2 ( \dual A, \mu ; \hilbert \otimes \conj{\hilbert})$ that is defined by replacing $\alpha$ with $\alpha \otimes \conj{\alpha}$ in the formula for $\pi(g)$:
	$$ \bigl( \pi'(g) f \bigr)(\lambda) = \sqrt{D(g,\lambda)} \, \bigl( \alpha(g,\lambda) \otimes \conj{\alpha(g,\lambda)} \bigr) \, f(g^{-1}\lambda) . $$
(\fullCref{FuncAnal}{TensorCont} implies that the cocycle $\alpha \otimes \conj{\alpha}$ is Borel measurable.)
\end{defn}

\begin{rem}\label{Akerpip} 
Notice that $A$ is in the kernel of $\pi \otimes_{\dual A} \conj{\pi}$,%
\refnote{AinKernel}
which means that $\pi \otimes_{\dual A} \conj{\pi}$ is a representation of $G/A$.
\end{rem}

An important feature of the fibered tensor product is that it preserves almost-invariant vectors, and more precisely we have the following.

\begin{prop} \label{TensorAlmInv}
If $f \in L^2( \dual A, \mu ; \hilbert)$ is a $(Q,\epsilon/3)$-invariant unit vector for~$\pi$, then  the function $f'(\lambda) = \frac{1}{\|f(\lambda)\|} \, f(\lambda) \otimes \conj{f(\lambda)}$ is a $(Q,\epsilon)$-invariant unit vector for $\pi \otimes_{\dual A} \conj{\pi}$.
\textup(\!We use the convention that $f'(\lambda) = 0$ if $f(\lambda) = 0$.\textup)
\end{prop}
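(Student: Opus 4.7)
The strategy is to view $f'$ as the fibrewise application of the nonlinear map
\[
\Phi \colon \hilbert \to \hilbert \otimes \conj{\hilbert}, \qquad \Phi(w) = \tfrac{1}{\|w\|}\, w \otimes \conj{w} \quad (w \ne 0), \qquad \Phi(0) = 0,
\]
so that $f'(\lambda) = \Phi(f(\lambda))$. The first step is to check, by unwinding \cref{FiberDefn}, that $\Phi$ intertwines the two representations pointwise, in the sense that
\[
\bigl( \pi'(g)\, f' \bigr)(\lambda) = \Phi\bigl( (\pi(g)\, f)(\lambda) \bigr) .
\]
Writing $w = \alpha(g,\lambda)\, f(g^{-1}\lambda)$, the action of $\alpha \otimes \conj{\alpha}$ on the tensor gives $w \otimes \conj{w}$, and the two powers of $\sqrt{D(g,\lambda)}$ combine with $\|f(g^{-1}\lambda)\|^{-1} = \|w\|^{-1}$ (using unitarity of $\alpha(g,\lambda)$) to yield precisely $\|\pi(g)f(\lambda)\|^{-1} \cdot \pi(g) f(\lambda) \otimes \conj{\pi(g) f(\lambda)}$. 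That $f'$ is a unit vector is then immediate, since $\|\Phi(w)\| = \|w\|$ because $\|w \otimes \conj{w}\| = \|w\|^2$ by \cref{FuncAnal}; hence $\|f'\|_{L^2}^2 = \int_{\dual A} \|f(\lambda)\|^2\, d\mu(\lambda) = 1$.

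The heart of the argument is a pointwise Lipschitz estimate
\[
\|\Phi(u) - \Phi(v)\|_{\hilbert \otimes \conj{\hilbert}} \le 3\, \|u - v\|_{\hilbert}, \qquad u, v \in \hilbert .
\]
For $u, v$ both nonzero, I would use the telescoping identity
\[
\Phi(u) - \Phi(v) = \frac{(u-v) \otimes \conj{u} + v \otimes (\conj{u} - \conj{v})}{\|u\|} + v \otimes \conj{v} \cdot \frac{\|v\| - \|u\|}{\|u\|\, \|v\|} ,
\]
apply the triangle inequality, and use $\bigl|\|v\| - \|u\|\bigr| \le \|u - v\|$ to obtain the bound $\bigl(1 + 2\|v\|/\|u\|\bigr)\, \|u-v\|$. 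The symmetric decomposition (swapping the roles of $u$ and $v$) yields $\bigl(1 + 2\|u\|/\|v\|\bigr)\, \|u - v\|$; since at least one of the ratios $\|u\|/\|v\|, \|v\|/\|u\|$ is $\le 1$, the smaller of the two bounds is at most $3\|u-v\|$. The boundary cases $u = 0$ or $v = 0$ are immediate from $\|\Phi(w)\| = \|w\|$. This estimate is the main obstacle: the naive Lipschitz bound for $\Phi$ explodes when either norm is small, and the symmetrization trick is exactly what tames it.

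Integrating the pointwise bound over $\dual A$ gives $\|\Phi \circ F - \Phi \circ G\|_{L^2} \le 3\, \|F - G\|_{L^2}$ for any measurable $\hilbert$-valued sections $F, G$. Applying this with $F = \pi(g) f$ and $G = f$ for $g \in Q$, combined with the hypothesis $\|\pi(g) f - f\|_{L^2} \le \epsilon/3$, gives $\|\pi'(g) f' - f'\|_{L^2} \le \epsilon$, which is precisely the $(Q,\epsilon)$-invariance of $f'$.
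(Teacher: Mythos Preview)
Your proof is correct and follows essentially the same architecture as the paper's: establish the pointwise identity $(\pi'(g)f')(\lambda) = \Phi\bigl((\pi(g)f)(\lambda)\bigr)$, prove the pointwise Lipschitz bound $\|\Phi(u)-\Phi(v)\| \le 3\|u-v\|$, and integrate. The only difference is in how the constant~$3$ is obtained. The paper isolates the estimate as a separate lemma (\cref{3Triangle}) and uses a three-step telescoping $Dw_1 \otimes \widehat w_2 \to v_1 \otimes \widehat w_2 \to \widehat v_1 \otimes Dw_2 \to \widehat v_1 \otimes v_2$, where the middle step contributes $\bigl|\|v_1\| - D\|w_2\|\bigr| \le \|Dw_1 - v_1\|$ directly, without any ratio of norms appearing. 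Your two-term telescoping produces the asymmetric bound $(1 + 2\|v\|/\|u\|)\|u-v\|$, which you then tame by symmetrizing in $u$ and $v$. Both routes are clean; the paper's avoids the symmetrization step, while your formulation via the single nonlinear map $\Phi$ is arguably more conceptual and makes the intertwining relation $(\pi'(g)f')(\lambda)=\Phi\bigl((\pi(g)f)(\lambda)\bigr)$ explicit.
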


Before giving the proof we need the following simple lemma.

\begin{lem} \label{3Triangle}
Suppose
	\begin{itemize}
	\item $\hilbert_1$ and $\hilbert_2$ are Hilbert spaces,
	\item $v_i,w_i \in \hilbert_i$ for $i = 1,2$,
	\item $\|w_1\| = \|w_2\|$,
	\item for $z \in \{v_1,v_2,w_1,w_2\}$, $\widehat z$ is a unit vector such that $z = \|z\| \, \widehat z$,
	and
	\item $D \ge 0$.
	\end{itemize}
Then
	$$ \| D w_1 \otimes \widehat w_2 - \widehat v_1 \otimes v_2 \| \le 2\| D w_1 - v_1 \| + \| Dw_2 - v_2 \| .$$
\end{lem}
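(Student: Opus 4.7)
The plan is to apply the triangle inequality with the intermediate vector $\widehat v_1 \otimes D w_2$:
$$ \| D w_1 \otimes \widehat w_2 - \widehat v_1 \otimes v_2 \| \le \|D w_1 \otimes \widehat w_2 - \widehat v_1 \otimes D w_2\| + \|\widehat v_1 \otimes D w_2 - \widehat v_1 \otimes v_2\| . $$
By the multiplicativity of the tensor norm (\cref{FuncAnal}) and the fact that $\widehat v_1$ is a unit vector, the second summand equals $\|D w_2 - v_2\|$, which already accounts for the last term in the desired bound.

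For the first summand, the hypothesis $\|w_1\| = \|w_2\|$ lets me factor out a common scalar: writing $D w_1 \otimes \widehat w_2 = (D\|w_1\|)\, \widehat w_1 \otimes \widehat w_2$ and $\widehat v_1 \otimes D w_2 = (D\|w_2\|)\, \widehat v_1 \otimes \widehat w_2 = (D\|w_1\|)\, \widehat v_1 \otimes \widehat w_2$, I see that the first summand is exactly $D\|w_1\| \cdot \|\widehat w_1 - \widehat v_1\|$.

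It then remains to show $D\|w_1\| \, \|\widehat w_1 - \widehat v_1\| \le 2\|D w_1 - v_1\|$. I would use the identity
$$ D\|w_1\|\,(\widehat w_1 - \widehat v_1) \;=\; (D w_1 - v_1) \,+\, \bigl(\|v_1\| - D\|w_1\|\bigr)\,\widehat v_1 , $$
verified by expanding $D w_1 = D\|w_1\|\,\widehat w_1$ and $v_1 = \|v_1\|\,\widehat v_1$ on both sides. The norm of the second term on the right is $\bigl|\|v_1\| - D\|w_1\|\bigr| = \bigl|\|v_1\| - \|D w_1\|\bigr|$, which by the reverse triangle inequality is at most $\|D w_1 - v_1\|$. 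Combining these bounds produces the factor of~$2$.

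There is essentially no obstacle here; the only decision is which intermediate vector to insert in the triangle inequality. The asymmetric factor of~$2$ arises naturally because the first tensor factor has to be ``renormalized'' (from $D w_1$ to $\widehat v_1$) while the second does not, and the normalized-direction comparison costs one application of the reverse triangle inequality in addition to the direct comparison. The hypothesis $\|w_1\|=\|w_2\|$ is used precisely to ensure that a single scalar $D\|w_1\|$ can be pulled out so that the first summand reduces cleanly to a one-sided estimate.
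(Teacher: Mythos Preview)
Your proof is correct and essentially the same as the paper's. The paper inserts two intermediate vectors, $v_1 \otimes \widehat w_2$ and $\widehat v_1 \otimes D w_2$, in a three-term triangle inequality, whereas you insert only $\widehat v_1 \otimes D w_2$ and then handle the remaining piece by factoring out the scalar $D\|w_1\|$ and applying your algebraic identity; both routes produce the same three contributions $\|Dw_1 - v_1\|$, $\bigl|\|v_1\| - D\|w_1\|\bigr|$, and $\|Dw_2 - v_2\|$, and both use $\|w_1\|=\|w_2\|$ together with the reverse triangle inequality at the same spot.
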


\begin{proof}
We have
	\begin{align*}
	 \| D & w_1 \otimes \widehat w_2 - \widehat v_1 \otimes  v_2 \|
	 \\& \le \| D w_1 {\otimes} \widehat w_2 - v_1 {\otimes} \widehat w_2 \|
	 	+ \| v_1 {\otimes} \widehat w_2 - \widehat v_1 {\otimes} (Dw_2) \|
		+ \| \widehat v_1 {\otimes} (Dw_2) - \widehat v_1 {\otimes} v_2 \|
	 \\& = \| (D w_1 - v_1) \otimes \widehat w_2 \|
	 	+ \Bigl\| \bigl( \| v_1 \| - D \| w_2 \| \bigr) \widehat v_1 \otimes \widehat w_2 \Bigr\|
		+ \| \widehat v_1 \otimes (Dw_2 - v_2) \|
	 \\& = \| D w_1 - v_1 \|
	 	+ \Bigl| \| v_1 \| - D \| w_2 \| \Bigr|
		+ \| Dw_2 - v_2 \|
	.\end{align*}
Since $\|w_1\| = \|w_2\|$, the conclusion now follows from the fact that $\bigl| \|v\| - \|w\| \bigr| \le \| v - w \|$ for all vectors $v$ and~$w$ in any Hilbert space.
\end{proof}

\begin{proof}[\bf Proof of Proposition \ref{TensorAlmInv}]
First, note that for $\lambda \in \dual A$, we have
	$$ \| f'(\lambda) \|
	= \left\| \frac{1}{\|f(\lambda)\|} \, f(\lambda) \otimes \conj{f(\lambda)} \right\|
	= \frac{1}{\|f(\lambda)\|} \cdot \| f(\lambda) \| \cdot \| \conj{f(\lambda)} \|
	= \| \conj{f(\lambda)} \|
	= \| f(\lambda) \|
	. $$
Therefore $\|f'\|_2 = \|f\|_2$, so $f'$~is a unit vector for the representation $\pi' = \pi \otimes_{\dual A} \conj{\pi}$.

For $g \in Q$ and $\lambda \in \dual A$, let
	$$ \text{$v_1 = f(\lambda)$, \quad $w_1 = \alpha(g,\lambda) \, f(g^{-1}\lambda)$,  \quad $v_2 = \conj{v_1}$,} $$
	$$  \text{$w_2 = \conj{w_1}$,
	\quad and \quad 
	$D = \sqrt{D(g,\lambda)}$} .$$
Then
	$$ f'(\lambda) = \frac{1}{\| f(\lambda) \|} \,  f(\lambda) \otimes \conj{f(\lambda)} = \widehat v_1 \otimes v_2 .$$
and
	\begin{align*}
	\bigl( \pi'(g)  f' \bigr)  (\lambda)
	&= \sqrt{D(g,\lambda)} \, \alpha'(g,\lambda) \, f'(g^{-1}\lambda)
	\\&= \frac{\sqrt{D(g,\lambda)}}{\|f(g^{-1} \lambda)\|} \, \alpha(g,\lambda) \, f(g^{-1}\lambda) \otimes \conj{\alpha(g,\lambda)} \, \conj{f(g^{-1}\lambda)}
	\\& = D w_1 \otimes \widehat w_2 
	 \end{align*}
Therefore, \cref{3Triangle} tells us that
	\begin{align*}
	 \left\| \bigl( \pi'(g)  f' \bigr)  (\lambda) - f'(\lambda) \right\|
	&\le 2\| D w_1 - v_1 \| + \| Dw_2 - v_2 \|
	\\&= 2\| D w_1 - v_1 \| + \| D\conj{w_1} - \conj{v_1} \|
	\\&= 3\| D w_1 - v_1 \|
	\\&= 3 \, \| \bigl( \pi(g) f\bigr)(\lambda) - f(\lambda) \| 
	, \end{align*}
since
	$$\bigl( \pi(g) f\bigr)(\lambda) = \sqrt{D(g,\lambda)} \, \alpha(g,\lambda) \, f(g^{-1}\lambda) =  D w_1 .$$
So
	\begin{align*}
	 \| \pi'(g)  f' - f' \|_2 \le 3 \, \| \pi(g)  f - f \|_2 < 3 \cdot \frac{\epsilon}{3} = \epsilon 
	. & \qedhere \end{align*}
\end{proof}

\subsection{Obtaining invariant vectors from a tensor product}
The following result is based on ideas of Jolissaint \cite[Thm.~1.2]{Jolissaint} and Nicoara-Popa-Sasyk \cite[proof of Lem.~1]{NPS}.

\begin{prop} \label{MMinv}
Let $\rho$ be a unitary representation of a locally compact group~$M$ on a Hilbert space~$\hilbert$, and suppose $\xi \in \hilbert$. If $\eta'$ is any $(\rho \otimes \conj{\rho})$-invariant vector in $\hilbert \otimes \conj\hilbert$, then there is a $\rho( M^{(1)} )$-invariant vector $\eta \in \hilbert$, such that
	$$ \| \eta - \xi \| \, \| \xi\|  \le 7 \, \| \eta' - \xi \otimes \conj{\xi} \| .$$
Moreover, $\eta$~can be chosen so that the subspace $\CC \eta$ is $\rho(M)$-invariant.
\end{prop}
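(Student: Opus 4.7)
The plan is to pass to the operator picture. Identify $\hilbert \otimes \conj{\hilbert}$ isometrically with the space of Hilbert--Schmidt operators on $\hilbert$ via $v \otimes \conj{w} \mapsto vw^*$ (the rank-one operator $u \mapsto \langle u,w\rangle v$). Under this identification, $\xi \otimes \conj{\xi}$ becomes the positive rank-one operator $\xi\xi^*$ of Hilbert--Schmidt norm $\|\xi\|^2$; the representation $\rho \otimes \conj\rho$ becomes conjugation $T \mapsto \rho(m)T\rho(m)^*$; and the $\rho \otimes \conj\rho$-invariant vectors correspond exactly to Hilbert--Schmidt operators $T$ that commute with $\rho(M)$. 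Let $T$ be the operator corresponding to $\eta'$ and set $\epsilon := \|T - \xi\xi^*\|_{\mathrm{HS}} = \|\eta' - \xi \otimes \conj\xi\|$.

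Let $\hilbert_0 = \bigoplus_\chi \hilbert_\chi$ be the closure of the sum of all $1$-dimensional $\rho(M)$-invariant subspaces of $\hilbert$, viewed as the orthogonal sum of the character-isotypic components $\hilbert_\chi = \{v : \rho(m)v = \chi(m)v \ \forall m\in M\}$. The relation $\rho(m)T = T\rho(m)$ implies $T$ carries each $\hilbert_\chi$ into itself, and the same holds for $T^*$; consequently $T$ preserves both $\hilbert_0$ and $\hilbert_0^\perp$ and decomposes as $T = \bigl(\bigoplus_\chi T_\chi\bigr) \oplus T_1$. Writing $\xi = \sum_\chi \xi_\chi + \xi_1$ and setting $\xi_0 = \sum_\chi \xi_\chi$, expanding $\|T - \xi\xi^*\|_{\mathrm{HS}}^2$ in block form yields
\[
\epsilon^2 = \sum_\chi \|T_\chi - \xi_\chi\xi_\chi^*\|_{\mathrm{HS}}^2 + \sum_{\chi\ne\chi'} \|\xi_\chi\|^2\|\xi_{\chi'}\|^2 + 2\|\xi_0\|^2\|\xi_1\|^2 + \|T_1 - \xi_1\xi_1^*\|_{\mathrm{HS}}^2.
\]
A key auxiliary inequality, discussed below, asserts that $\|T_1 - \xi_1\xi_1^*\|_{\mathrm{HS}}^2 \ge \|\xi_1\|^4/2$.

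I would now split into two cases. If $\|\xi_0\|^2 \ge \|\xi\|^2/2$, take $\eta := \xi_{\chi^*}$, where $\chi^*$ maximizes $\|\xi_\chi\|$. The second sum of the expansion, combined with $\sum_\chi\|\xi_\chi\|^4 \le \|\xi_{\chi^*}\|^2\|\xi_0\|^2$, gives $\|\xi_0 - \xi_{\chi^*}\|^2 \le \epsilon^2/\|\xi_0\|^2$, while the cross term gives $\|\xi_1\|^2 \le \epsilon^2/(2\|\xi_0\|^2)$, so $\|\eta - \xi\|^2 \le 3\epsilon^2/(2\|\xi_0\|^2)$; using $\|\xi\| \le \sqrt{2}\,\|\xi_0\|$ in this case, we conclude $\|\eta - \xi\|\,\|\xi\| \le \sqrt{3}\,\epsilon \le 7\epsilon$. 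Because $\eta \in \hilbert_{\chi^*}$, the subspace $\CC\eta$ is $\rho(M)$-invariant and $\eta$ is $\rho(M^{(1)})$-invariant (the character $\chi^*$ is trivial on $M^{(1)}$). Otherwise $\|\xi_1\|^2 > \|\xi\|^2/2$, so the auxiliary inequality gives $\epsilon^2 \ge \|\xi_1\|^4/2 > \|\xi\|^4/8$, and the trivial choice $\eta := 0$ satisfies $\|\eta-\xi\|\,\|\xi\| = \|\xi\|^2 \le 2\sqrt{2}\,\epsilon \le 7\epsilon$, with $\CC\eta = 0$ vacuously invariant.

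The hard part is the auxiliary inequality $\|T_1 - \xi_1\xi_1^*\|_{\mathrm{HS}}^2 \ge \|\xi_1\|^4/2$ on $\hilbert_0^\perp$. Equivalently, the orthogonal projection of $\xi_1\xi_1^*$ onto the closed subspace of Hilbert--Schmidt operators commuting with $\rho(M)$ on $\hilbert_0^\perp$ must have Hilbert--Schmidt norm at most $\|\xi_1\|^2/\sqrt{2}$. The strategy is to work inside the nonzero spectral subspaces of $T_1^*T_1$ (finite-dimensional and $\rho$-invariant), decompose each into irreducible subrepresentations of the compact closure of $\rho(M)$ in the relevant unitary group, and apply Schur's lemma; the crucial input is that every such irreducible subrepresentation has dimension at least $2$, because by construction $\hilbert_0^\perp$ contains no $1$-dimensional $\rho(M)$-invariant subspace.
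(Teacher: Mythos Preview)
Your argument is correct and takes a route genuinely different from the paper's.  The paper first replaces $\eta'$ by the orthogonal projection of $\xi\otimes\conj\xi$ onto the $(\rho\otimes\conj\rho)$-invariant vectors, so that the corresponding Hilbert--Schmidt operator $T$ lies in the closed convex hull of $\{\rho(m)P_\xi\rho(m)^{-1}:m\in M\}$ and is therefore positive with trace~$1$ and spectrum in $[0,1]$.  It then shows, via the estimate $\|T-T^2\|_{HS}\le 3\delta$, that the top eigenvalue $c_1$ of $T$ exceeds $1/2$; hence the $c_1$-eigenspace is one-dimensional and $\rho(M)$-invariant, and $\eta$ is taken there.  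Your approach instead decomposes $\hilbert$ a priori as $\bigl(\bigoplus_\chi\hilbert_\chi\bigr)\oplus\hilbert_0^\perp$ and shows directly that $\xi$ must be concentrated in a single $\hilbert_{\chi^*}$.  This avoids the preliminary replacement of $\eta'$, works for the given $\eta'$ as stated, and even yields the sharper constants $\sqrt3$ and $2\sqrt2$ in place of~$7$.  The paper's argument, on the other hand, is more self-contained: it never needs to name the character spaces or analyse the commutant on $\hilbert_0^\perp$.

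One point in your sketch deserves tightening.  For the auxiliary inequality $\|T_1-\xi_1\xi_1^*\|_{HS}^2\ge\|\xi_1\|^4/2$ you do not need $T_1^*T_1$, compact closures, or Schur's lemma.  First reduce to self-adjoint $T_1$: replacing $T_1$ by $(T_1+T_1^*)/2$ only decreases the distance to the self-adjoint operator $\xi_1\xi_1^*$, and stays in the commutant.  Then the nonzero eigenspaces $E_k$ of $T_1$ itself are finite-dimensional and $\rho(M)$-invariant; since $\hilbert_0^\perp$ contains no one-dimensional invariant subspace, each satisfies $\dim E_k\ge 2$.  Writing $\lambda_k$ for the eigenvalues and $P_k$ for the eigenprojections,
\[
\|T_1\|_{HS}^2-2\langle T_1\xi_1,\xi_1\rangle
=\sum_k\bigl(\lambda_k^2\dim E_k-2\lambda_k\|P_k\xi_1\|^2\bigr)
\ge-\sum_k\frac{\|P_k\xi_1\|^4}{\dim E_k}
\ge-\tfrac12\|\xi_1\|^4,
\]
the last step using $\dim E_k\ge 2$ together with $\sum_k\|P_k\xi_1\|^4\le\bigl(\sum_k\|P_k\xi_1\|^2\bigr)^2\le\|\xi_1\|^4$.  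This gives the inequality at once.
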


\begin{proof}
 Assume, without loss of generality, that $\|\xi\| = 1$, and, for convenience, let $\delta = \| \eta' - \xi \otimes \conj{\xi} \|$. We may assume that $\delta < 1/7$. (Otherwise, the desired inequality is satisfied with $\eta = 0$.)

Let $\hilbert' = \hilbert \otimes \conj{\hilbert}$, and note that $\hilbert'$ can be identified with the space of Hilbert-Schmidt operators on~$\hilbert$, which are compact operators with finite trace (see \cite[the discussion on page 294]{BHV}). In this identification, the vector $\xi' = \xi \otimes \conj{\xi}$ corresponds to the rank-one orthogonal projection $P_\xi$ on the line~$\CC\xi$, defined by
	$$ P_\xi(\eta) = \langle\eta,\xi\rangle\xi ,$$
where the inner product is from~$\hilbert$. In particular, $P_\xi$ is a self-adjoint operator with trace~1 and its spectrum is in $\{0,1\}$. Therefore, all the Hilbert-Schmidt operators corresponding to elements in the closed convex hull of $(\rho \otimes \conj{\rho})(M)\xi'$ are self-adjoint operators with trace~1 and their spectrum is contained in $[0, 1]$.

Let $T$ be the Hilbert-Schmidt operator corresponding to~$\eta'$.
We may assume that $\eta'$ is the projection of~$\xi'$ onto the space of $(\rho \otimes \conj{\rho})$-invariant vectors, so $\eta'$ is in the closed convex hull of $(\rho \otimes \conj{\rho})(M)\xi'$. Then $T$ is a self-adjoint Hilbert-Schmidt operator with trace~1 and whose spectrum is contained in $[0, 1]$.

As $T$ is invariant under $\rho(M)$, so are all of its eigenspaces, and we claim that $T$ has a one-dimensional eigenspace. Let $\{c_i\}$ be the eigenvalues of~$T$, and assume $c_1 > c_i$ for all $i \neq 1$. Then $\sum c_i = 1$, and the Hilbert-Schmidt norm~$\|T\|_{HS}$ of~$T$ satisfies
	$$ \text{$\|T\|_{HS} = \left( \sum c_i^2 \right)^{1/2} \le 1$ \ and \ $\|T\|_{HS} \ge \|T\| = c_1$} . $$
From the definition of~$T$, we have
	$$ \| T - P_\xi \|_{HS} = \| \eta' - \xi' \| = \delta .$$
Also
	$$ \| T^2 - P_\xi\|_{HS} = \| T^2 - P_\xi^2\|_{HS} \le \|T+P_\xi\|_{HS} \, \|T-P_\xi\|_{HS} \le 2\delta .$$
Then
	$$ \| T-T^2\|_{HS} \le \|T-P_\xi \|_{HS} + \| P_\xi-T^2\|_{HS} \le \delta + 2\delta =3\delta .$$
As $c_1 > c_i$ for all $i \neq 1$, we have 
	\begin{align*}
	 3\delta 
	 &\ge \|T\|_{HS} - \|T^2\|_{HS} = \left( \sum c_i^2 \right)^{1/2} - \left( \sum c_i^4 \right)^{1/2}  
	 \\&\ge (1-c_1) \left( \sum c_i^2 \right)^{1/2}= (1-c_1) \|T\|_{HS}\ge (1-c_1)(1-\delta)
	 . \end{align*}
Therefore 
	 $$ c_1 \ge 1- \frac{3\delta}{1-\delta} = \frac{1-4\delta}{1-\delta} > \frac{1}{2}, $$
since $\delta < 1/7$. Since the trace of~$T$ is~1, we conclude that the eigenspace corresponding
to the eigenvalue $c_1$ has dimension~1, as claimed.

Note that the $c_1$-eigenspace of~$T$ is not orthogonal to~$\xi$: otherwise, if $\eta_0$ is a unit vector in the eigenspace, then
	$$ \frac{1}{2} < c_1 = \| c_1 \eta_0 \| = \| T\eta_0 - P_\xi(\eta_0) \| \le \| T - P_\xi \| \le \delta < \frac{1}{7} .$$
Therefore, we may let $\eta$ be the (unique) vector in the $c_1$-eigenspace of~$T$, such that $P_\xi(\eta) = c_1\xi$. Then
	$$ \| \eta - \xi \|
	=  \frac{1}{c_1}\| T(\eta)  - P_\xi(\eta) \| 
	\le  \frac{1}{c_1}\| T - P_\xi \| \, \| \eta\|
	<  \frac{\delta}{c_1} \| \eta \|
	< 2 \delta \| \eta \|
	.$$
Since $\|\xi\| = 1$ and $2\delta < 2/7 < 1/3$, this implies 
	$ \| \eta \| < 3/2 $,
so $\| \eta - \xi \| < 3 \delta < 7 \delta$. Also, since $\CC \eta$ is a $1$-dimensional $\rho(M)$-invariant subspace, we know that $\rho(M^{(1)})$ acts trivially on it, so $\eta$ is $\rho(M^{(1)})$-invariant.
\end{proof}

\section{Proof of the main theorem} \label{MainPfSect}

Recall that all locally compact groups are assumed to be second countable \csee{SecondCountable}.

\Cref{relT(GH1meas)} is the special case of the following result in which $G = H$. (Recall that if either $G = H$ or $M$~is normal in~$G$, then \relT\ for the triple $(G,H,M)$ is equivalent to \relT\ with approximation, by \cref{NearInvariantForPair,NearInvariantForNormal}.)

\begin{thm} \label{TripleHasT}
Let $H$ and $M$ be closed subgroups of a locally compact group~$G$, and let $A$ be a closed, abelian subgroup of~$M$ that is normal in~$G$. Assume that every $M$-invariant finite measure on~$\dual A$ is supported on the set of fixed points of~$M$. If $HA$~is closed and\/ $(G/A, HA/A, M/A)$ has \relT\ with approximation, then\/ $(G, H, M^{(1)})$ has \relT\ with approximation. 
\end{thm}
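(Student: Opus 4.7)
The plan is to exploit the fibered tensor product $\pi' = \pi \otimes_{\dual A} \conj\pi$ from \cref{FiberSect}: since $A$ lies in its kernel (\cref{Akerpip}), $\pi'$ descends to a representation of $G/A$, where the hypothesis supplies relative Property~(T) with approximation for $(G/A, HA/A, M/A)$. Converting an $M/A$-invariant vector in $\pi'$ back to an $M^{(1)}$-invariant vector in $\pi$ will be accomplished fiberwise over $\dual A$ using \cref{MMinv}, with the hypothesis on $\dual A$-measures ensuring that only the $M$-fixed fibers contribute.

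Given $\delta > 0$, I would first invoke relative Property~(T) with approximation for $(G/A, HA/A, M/A)$ with parameter $\delta/7$ to obtain a compact $Q_A \subseteq HA/A$ and $\epsilon' > 0$. Lift $Q_A$ to a compact $Q \subseteq H$ (possible because $H \twoheadrightarrow H/(H \cap A) \cong HA/A$ is a quotient map of $\sigma$-compact locally compact groups), and set $\epsilon = \epsilon'/3$. For any unitary representation $\pi$ of $G$ with $(Q,\epsilon)$-invariant unit vector $\xi$, realize $\pi$ on $L^2(\dual A, \mu; \hilbert_0)$ with cocycle $\alpha$ (after a standard Mackey-theoretic reduction to the homogeneous case) and form $\pi'$ as in \cref{FiberDefn}. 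By \cref{TensorAlmInv}, the vector $\xi'(\lambda) = \|\xi(\lambda)\|^{-1}\,\xi(\lambda) \otimes \conj{\xi(\lambda)}$ is a $(Q, \epsilon')$-invariant unit vector for $\pi'$, and hence is $(Q_A, \epsilon')$-invariant when viewed as a vector in the representation of $G/A$; the hypothesis then yields an $M/A$-invariant vector $\eta'$ with $\|\eta' - \xi'\| \le \delta/7$.

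The hypothesis on $\dual A$ now enters decisively: since $\pi'(m)\eta' = \eta'$ for every $m \in M$, the finite measure $\|\eta'(\cdot)\|^2\,d\mu$ on $\dual A$ is $M$-invariant and is therefore supported on the set of $M$-fixed points. At such a fixed point $\lambda$, the cocycle identity collapses to multiplicativity, so $m \mapsto \alpha(m,\lambda)$ is a genuine unitary representation $\rho_\lambda$ of $M$ on $\hilbert_0$, and the measure's invariance forces $D(m,\lambda) = 1$ wherever $\eta'(\lambda) \neq 0$; in particular $\eta'(\lambda)$ is $(\rho_\lambda \otimes \conj{\rho_\lambda})$-invariant. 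I would then apply \cref{MMinv} to $\rho_\lambda$ with the unit vector $\xi(\lambda)/\|\xi(\lambda)\|$ and the invariant vector $\eta'(\lambda)/\|\xi(\lambda)\|$ and rescale by $\|\xi(\lambda)\|$, obtaining $\eta(\lambda) \in \hilbert_0$ that is $\rho_\lambda(M^{(1)})$-invariant and satisfies $\|\eta(\lambda) - \xi(\lambda)\| \le 7\,\|\eta'(\lambda) - \xi'(\lambda)\|$. Defining $\eta(\lambda) = 0$ elsewhere (and in the edge case $\xi(\lambda) = 0$), the same pointwise inequality still holds, since then $\|\eta'(\lambda) - \xi'(\lambda)\| = \|\xi(\lambda)\|$. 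Integrating gives $\|\eta - \xi\|_2 \le 7\,\|\eta' - \xi'\|_2 \le \delta$, and $\eta$ is $\pi(M^{(1)})$-invariant because on its support the cocycle acts as $\rho_\lambda(M^{(1)})$ (which fixes $\eta(\lambda)$) while $D \equiv 1$.

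The main technical obstacle is measurability: the pointwise construction must assemble into a Borel measurable section $\lambda \mapsto \eta(\lambda)$ lying in $L^2(\dual A, \mu; \hilbert_0)$. In the proof of \cref{MMinv}, $\eta(\lambda)$ is extracted from the top eigenspace of the Hilbert--Schmidt operator determined by $\eta'(\lambda)$, and this operator depends measurably on~$\lambda$; a standard measurable selection theorem (Kuratowski--Ryll-Nardzewski applied to the Borel set-valued map sending $\lambda$ to the top eigenspace, or an explicit resolvent calculus) delivers the required section. The ancillary preparatory step of reducing to a homogeneous system of imprimitivity is handled by treating the homogeneous components of $\pi|_A$ separately and reassembling via direct sum.
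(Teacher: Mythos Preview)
Your proposal is correct and follows essentially the same line as the paper's proof: form the fibered tensor product to kill~$A$, apply \relT\ with approximation for $(G/A,HA/A,M/A)$ to the vector of \cref{TensorAlmInv}, use the measure hypothesis to localize the resulting $M$-invariant vector on $\dual A^M$, and then invoke \cref{MMinv} fiberwise, with measurability handled by a selection theorem. The paper streamlines the reduction to the homogeneous case by replacing $\pi$ with the countable direct sum $\pi \oplus \pi \oplus \cdots$ (forcing all multiplicities to be~$\infty$) rather than decomposing into homogeneous components, and cites the von~Neumann Selection Theorem where you invoke Kuratowski--Ryll-Nardzewski.
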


\begin{proof}
Let $\delta > 0$ be arbitrary. Since $(G/A, HA/A, M/A)$ has \relT\ with approximation (and $HA$ is closed), there is a compact subset~$Q$ of~$H$ and $\epsilon > 0$, such that if $\xi'$ is any $(Q,\epsilon)$-invariant vector for a unitary representation of $G/A$, then there is an $M$-invariant vector~$\eta'$, such that $\| \eta' - \xi' \| < \delta/7$. 

Now, suppose $\pi$ is a unitary representation of~$G$, such that $\pi$ has a $(Q,\epsilon/3)$-invariant vector~$f$. (We wish to show that $f$ is well-approximated by an $M$-invariant vector.) By replacing $\pi$ with the direct sum $\pi \oplus \pi \oplus \cdots$ of infinitely many copies of itself, we may assume that all irreducible representations appearing in the direct integral decomposition of~$\pi|_A$ have the same multiplicity (namely,~$\infty$). By definition, this means that $\pi|_A$ is homogeneous, so \cref{FiberSect} provides a measure~$\mu$ on~$\dual A$, a Borel cocycle $\alpha \colon G \times \dual A \to \unitary(\hilbert)$, a corresponding realization of~$\pi$ as a representation on the Hilbert space $L^2(\dual A, \mu;\hilbert)$, and a unitary representation $\pi' = \pi \otimes_{\dual A} \conj{\pi}$ of~$G$. 

Since $\pi$ has been realized as a representation on $L^2( \dual A, \mu ; \hilbert)$, we know that $f \in L^2( \dual A, \mu ; \hilbert)$. Then \cref{TensorAlmInv} provides a $(Q,\epsilon)$-invariant unit vector~$f'$ for~$\pi'$. By the choice of $Q$ and~$\epsilon$ (and Remark \ref{Akerpip}), we know that there is a $\pi'(M)$-invariant vector $f'_M\in L^2 ( \dual A, \mu ; \hilbert \otimes \conj{\hilbert} )$, such that $\| f'_M - f' \|_2 < \delta/7$.

Since $f'_M$ is $M$-invariant, we have
$$\|f'_M(\lambda)\|=\sqrt{D(m,\lambda)} \, \|f'_M(m^{-1}\lambda)\| \hbox{ for }m\in M\hbox{ and a.e. }\lambda\in\dual A,$$
so it is straightforward to check that $\|f'_M\|^2 \, \mu$ is an $M$-invariant measure on~$\dual A$. Furthermore, this measure is finite because $f'_M$ is in $L^2 ( \dual A, \mu ; \hilbert \otimes \conj{\hilbert} )$. By assumption, this implies that (up to modifying $f'_M$ on a set of measure zero) we may choose the support of $f'_M$ to be contained in the set $\dual A^M$ of fixed points of~$M$ in~$\dual A$ . For each fixed $\lambda\in\dual A^M$, the function $\alpha(m,\lambda)$ is a representation $\rho_\lambda$ of $M$ on $\hilbert$ (and $D(m,\lambda)\equiv 1$ on $\dual A^M$), so $M$ acts on the subspace $L^2 ( \dual A^M, \mu|_{\dual A^M} ; \hilbert \otimes \conj{\hilbert} )$ by
$$ \bigl( \pi'(m)f' \bigr)(\lambda)= \bigl( (\rho_\lambda \otimes \conj{\rho_\lambda}(m) \bigr) f'(\lambda).$$
Note that $f'_M(\lambda)$ must be an $M$-invariant vector in $\hilbert \otimes \conj\hilbert$, for a.e.\ $\lambda \in \dual A$. Then, since \cref{TensorAlmInv} tells us that $f'(\lambda) = \frac{1}{\|f(\lambda)\|} \, f(\lambda) \otimes \conj{f(\lambda)}$, \cref{MMinv} provides a $\rho_\lambda(M^{(1)})$-invariant vector $v(\lambda) \in \hilbert$, such that $\| v(\lambda) - f(\lambda) \| \le 7 \, \| f'_M(\lambda) - f'(\lambda) \|$. (Also note that the von~Neumann Selection Theorem \cite[Thm.~3.4.3, p.~77]{Arveson} implies that we may choose $v(\lambda)$ to be a measurable function of~$\lambda$.) Then $v$ is $\pi(M^{(1)})$-invariant, and 
	$\| v - f \|_2 \le 7 \|f'_M - f' \|_2 < \delta$.
\end{proof}

\begin{rem} \label{WhenInvMeas}
Here are two situations that satisfy \cref{TripleHasT}'s assumption that every $M$-invariant finite measure on~$\dual A$ is supported on the set of fixed points of~$M$:
	\begin{enumerate}
	\item \label{WhenInvMeas-center}
	If $A$ is contained in the center of~$M$, then $M$~acts trivially on~$A$ (so it also acts trivially on~$\dual A$), so every point in~$\dual A$ is a fixed point.
	\item \label{WhenInvMeas-split}
	If $M$ is a connected, solvable Lie group that is real split, and $A$~is a closed, $1$-connected, abelian, normal subgroup (so $\dual A \cong A$), then the desired conclusion is a well known result in the spirit of the Borel Density Theorem (cf.\ \cite[Cor.~1.3]{Dani-quasi}).%
	\refnote{BDT}
	\end{enumerate}
\end{rem}

\begin{rem} \label{QuantitativeTripleHasT}
If $G$, $H$, $M$, and~$A$ are as described in the first two sentences of the statement of \cref{TripleHasT}, then
the proof establishes the following quantitative version of the \lcnamecref{TripleHasT}: 
	{\it Suppose $Q \subseteq H$ and $\delta,\epsilon > 0$, such that, for every $(Q,\epsilon)$-invariant vector~$\xi'$ for a unitary representation of $G/A$, there is an $M$-invariant vector~$\eta'$, such that $\| \eta' - \xi' \| < \delta/7$. Then, for every $(Q,\epsilon/3)$-invariant vector~$\xi$ for a unitary representation of~$G$, there is an $M^{(1)}$-invariant vector~$\eta$, such that $\| \eta - \xi \| < \delta$.}
\end{rem}

\begin{rem} \label{KazhdanSetForG/A}
Taking $G = H = M$ in \cref{QuantitativeTripleHasT} establishes that if $Q$ is a subset of a locally compact group~$G$, and $A$ is a closed, abelian, normal subgroup of $G$, such that
	\begin{enumerate}
	\item the image of~$Q$ in $G/A$ is a Kazhdan set for $G/A$, 
	and
	\item every $G$-invariant finite measure on~$\dual A$ is supported on the set of fixed points of~$G$,
	\end{enumerate}
then $Q$ is a Kazhdan set for the pair $(G, G^{(1)})$. (That is, there exists $\epsilon > 0$, such that every unitary representation of~$G$ that has $(Q,\epsilon)$-invariant vectors also has $G^{(1)}$-invariant vectors.) \end{rem}

The following \lcnamecref{TripleNilpHasT} removes the phrase ``with approximation'' from the statement of \cref{TripleHasT}, at the expense of placing restrictions on $M$ and~$A$.

\begin{notation}
We use $Z(M)$ to denote the center of a group~$M$.
\end{notation}

\begin{thm} \label{TripleNilpHasT}
Let $H$ and~$M$ be closed subgroups of a locally compact group~$G$, and let $A$ be a closed subgroup of $Z(M) \cap M^{(1)}$ that is normal in $G$. Assume that $HA$ is closed, that $M$ is nilpotent, and that either $M$~has no closed, proper subgroups of finite index, or $A$ is contained in~$H$ and is compactly generated. If\/ $(G/A, HA/A, M/A)$ has \relT, then\/ $(G, H, M)$ has \relT. 
\end{thm}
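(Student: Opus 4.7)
The plan is to establish the theorem in two stages: first show $(G, H, M^{(1)})$ has relative Property~(T), then bootstrap to $(G, H, M)$ via the quotient hypothesis. For Stage~1, given a unitary representation $\pi$ of $G$ with almost-$H$-invariant vectors, I would follow the setup of \cref{FiberSect}: realize $\pi$ on $L^2(\dual A, \mu; \hilbert)$ (after replacing $\pi$ by a direct sum of infinitely many copies so that $\pi|_A$ is homogeneous), form the fibered tensor product $\pi' = \pi \otimes_{\dual A} \conj\pi$, and invoke \cref{TensorAlmInv} to obtain almost-invariant vectors for $\pi'$ as a representation of $G/A$. The hypothesis that $(G/A, HA/A, M/A)$ has relative Property~(T) would then yield a nonzero $M$-invariant vector $f'_M$ for $\pi'$. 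After symmetrizing $f'_M$ via the natural involution on $\hilbert \otimes \conj\hilbert$ (which commutes with the $\pi'$-action), I may assume $f'_M(\lambda)$ corresponds to a self-adjoint Hilbert--Schmidt operator $T_\lambda$ commuting with $\rho_\lambda(M)$; for $\lambda$ in a set of positive measure, its top nonzero eigenspace $V_\lambda$ is a finite-dimensional $\rho_\lambda(M)$-invariant subspace of $\hilbert$.

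This is where the case hypothesis is invoked. The closure of $\rho_\lambda(M)$ in $\unitary(V_\lambda)$ is a compact nilpotent Lie group whose identity component is a torus, hence abelian. In Case~(a), no proper closed subgroup of $M$ has finite index, so $\rho_\lambda(M)$ must lie in the identity component of its closure---otherwise its preimage would be a proper closed finite-index subgroup of $M$---forcing $\rho_\lambda(M)$ itself to be abelian and hence $\rho_\lambda(M^{(1)})$ to act trivially on $V_\lambda$. In Case~(b), a parallel argument uses that $A \subseteq H$ makes any almost-$H$-invariant $\xi$ automatically almost-$A$-invariant (so its spectral measure concentrates near $1 \in \dual A$), and the compact generation of $A$ then provides the uniformity needed to conclude the same triviality. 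A measurable selection of a vector in $V_\lambda$, normalized by $\|T_\lambda\|_{\mathrm{HS}}$ (via the von~Neumann Selection Theorem as in the proof of \cref{TripleHasT}), then produces a nonzero $\pi(M^{(1)})$-invariant vector in $L^2(\dual A, \mu; \hilbert)$, establishing that $(G, H, M^{(1)})$ has relative Property~(T).

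For Stage~2, since $A \subseteq M^{(1)}$, Stage~1 immediately gives that $(G, H, A)$ has relative Property~(T); and because $A$ is normal in $G$, \cref{NearInvariantForNormal} upgrades this to relative Property~(T) with approximation. Given any unitary representation $\pi$ of $G$ with almost-$H$-invariant vectors, this approximation property would produce $(Q,\epsilon)$-invariant vectors arbitrarily close to $A$-invariant vectors; projecting onto the $G$-invariant subspace $\hilbert^A$ of $A$-invariants would yield almost-$HA/A$-invariant vectors for the induced representation of $G/A$ on $\hilbert^A$, and the hypothesis that $(G/A, HA/A, M/A)$ has relative Property~(T) would then produce $M/A$-invariant---hence $M$-invariant---vectors in $\hilbert$. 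The main obstacle in the whole argument is the character-theoretic step in Stage~1 showing that $\rho_\lambda(M^{(1)})$ acts trivially on $V_\lambda$: Case~(a) handles this cleanly via the structure of compact nilpotent Lie groups, while Case~(b) relies on the separate spectral-concentration argument. The remainder is a routine combination of \cref{TensorAlmInv}, \cref{NearInvariantForNormal}, and the fibered tensor product framework.
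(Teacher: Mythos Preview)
Your overall architecture---first establish relative Property~(T) for $(G,H,A)$ via the fibered tensor product, then pass to $\hilbert^A$ and invoke the quotient hypothesis---is exactly the paper's strategy, and your Stage~2 and your Case~(a) are essentially correct and match the paper's argument. (In Case~(a) you in fact extract a $\pi(M^{(1)})$-invariant vector rather than merely a $\pi(A)$-invariant one; this is fine and slightly stronger than what the paper writes down, since the paper observes only that $\rho_\lambda(A)|_{F_\lambda}$ trivial forces $\lambda = 1$, giving $\mu(\{1\})>0$.)

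The genuine gap is Case~(b). Your ``spectral concentration'' idea does not do the work you claim. The fact that an almost-$H$-invariant vector~$f$ has $\|f\|^2\mu$ concentrated near $1 \in \dual A$ tells you something about where $f$ lives, but it says nothing about the structure of $\rho_\lambda(M^{(1)})|_{V_\lambda}$, which is what you need. In Case~(b) the closure $M_\lambda$ of $\rho_\lambda(M)|_{V_\lambda}$ is only \emph{virtually} abelian, so $\rho_\lambda(M^{(1)})|_{V_\lambda}$ can be a nontrivial finite group for every $\lambda$ in your set~$E$; a measurable selection from $V_\lambda$ therefore need not be $\pi(M^{(1)})$-invariant, and your Stage~1 target $(G,H,M^{(1)})$ is too ambitious here. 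The paper instead aims only for $(G,H,A)$ and uses a different mechanism: from virtual abelianness one finds a finite-index subgroup $M'_\lambda$ with abelian image, hence $(M'_\lambda)^{(1)}$ has finite index in $M^{(1)}$ (this uses nilpotence of~$M$), so $\rho_\lambda$ kills a finite-index subgroup of~$A$; by pigeonhole some $A^m$ works on a set of positive measure, giving $(G,H,A^m)$ relative~(T). Then---and this is precisely where the two hypotheses of Case~(b) enter---compact generation of~$A$ makes $A/A^m$ compact, and $A \subseteq H$ makes $(G/A^m, H/A^m, A/A^m)$ trivially have relative~(T), so one bootstraps to $(G,H,A)$. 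Your sketch invokes neither the $A^m$ device nor this bootstrap, and the vague appeal to ``uniformity from compact generation'' is not a substitute for it.
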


\begin{proof}
Let $\pi$ be a unitary representation of~$G$, such that $\pi|_H$ has almost-invariant vectors. 

Assume, for the moment, that the triple $(G,H,A)$ has \relT. Then the space of $A$-invariant vectors is nonzero.
Since $A$ is a normal subgroup, this space is $G$-invariant, and therefore yields a representation~$\pi^A$ of~$G/A$. Also (because $A$ is a normal subgroup), \cref{NearInvariantForNormal} tells us that $(G,H,A)$ has \relT\ with approximation, so the restriction of $\pi^A$ to~$H$ has almost-invariant vectors. Since $(G/A,HA/A,M/A)$ has \relT, we conclude that $\pi^A$ (and hence~$\pi$) has nonzero $M$-invariant vectors. So $(G,H,M)$ has \relT, as desired.

To complete the proof, we show that the triple $(G,H,A)$ does indeed have \relT. 
That is, we show that $\pi$~has nonzero $A$-invariant vectors.
Arguing as in the proof of \cref{TripleHasT}, we see that we may assume that $\pi|_A$ is homogeneous (by replacing $\pi$ with $\pi \oplus \pi \oplus \cdots$), so \cref{FiberSect} provides a measure~$\mu$ on~$\dual A$, a Borel cocycle $\alpha \colon G \times \dual A \to \unitary(\hilbert)$, a corresponding realization of~$\pi$ as a representation on the Hilbert space $L^2(\dual A, \mu; \hilbert)$, and a unitary representation $\pi' = \pi \otimes_{\dual A} \conj{\pi}$ of~$G$. Also, $M$~acts trivially on~$\dual A$, so, for each fixed $\lambda \in \dual A$, the function $\alpha(m,\lambda)$ is a representation~$\rho_\lambda$ of~$M$ on~$\hilbert$, so $M$ acts on $L^2 ( \dual A, \mu ; \hilbert \otimes \conj{\hilbert} )$ by
	$$ \bigl( \pi'(m)f' \bigr)(\lambda) = \bigl( (\rho_\lambda \otimes \conj{\rho_\lambda})(m) \bigr)f'(\lambda) .$$
Also, since $(G/A,HA/A,M/A)$ has \relT, there is a nonzero $\pi'|_M$-invariant vector $f'_M$ in $L^2 ( \dual A, \mu ; \hilbert \otimes \conj{\hilbert} )$.

Therefore, $\rho_\lambda \otimes \conj{\rho_\lambda}$ has a nonzero $M$-invariant vector for all~$\lambda$ in a set~$E$ of positive measure. For each $\lambda \in E$, there must be a finite-dimensional $\rho_\lambda(M)$-invariant subspace~$F_\lambda$ of~$\hilbert$ \cite[Prop.~A.1.2, p.~295]{BHV}.
Let $M_\lambda$ be the closure of $\rho_\lambda(M)|_{F_\lambda}$. This is a closed (hence compact) subgroup of $\mathrm{SU}(n)$, for some $n \in \NN$. Every compact, nilpotent Lie group is virtually abelian \cite[Cor.~11.2.11, p.~447]{HilgertNeeb}, so we see that $M_\lambda$ has a closed, abelian subgroup of finite index. 

\setcounter{case}{0}

\begin{case}
Assume $M$ has no closed, proper subgroups of finite index.
\end{case}
Then the entire group $M_\lambda$ must be abelian. This means that $\rho_\lambda(M^{(1)})|_{F_\lambda}$ is trivial. Since $A\subseteq M^{(1)}$, this implies that $\rho_\lambda(A)$ fixes every element of~$F_\lambda$. So $\lambda(a) = 1$ for all $a \in A$ and all $\lambda \in E$.

This means that $\mu \bigl( \{1\} \bigr) \neq 0$. Therefore, if we fix any nonzero $\xi_0 \in \hilbert$, then the function 
	$$f(\lambda) = \begin{cases}
	\xi_0 & \text{if $\lambda = 1$} , \\
	0 & \text{otherwise}
	 \end{cases} $$
is nonzero in $L^2(\dual A, \mu; \hilbert)$. And it is obviously fixed by~$A$. So $\pi$ has a nonzero $A$-invariant vector, as desired.

\begin{case}
Assume $A \subseteq H$ and $A$~is compactly generated.
\end{case}
Since $M_\lambda$ is virtually abelian, we know there is a finite-index subgroup~$M_\lambda'$ of~$M$, such that $\rho_\lambda(M_\lambda')|_{F_\lambda}$ is abelian. So $\rho_\lambda|_{F_\lambda}$ is trivial on $(M_\lambda')^{(1)}$, which is a finite-index subgroup of~$M^{(1)}$ (since $M$ is nilpotent and $M_\lambda'$~has finite index in~$M$). Since $A \subseteq M^{(1)}$, we conclude that $\rho_\lambda|_{F_\lambda}$ is trivial on a finite-index subgroup~$A_\lambda$ of~$A$.

Therefore, there is some $m \in \NN$, such that $\rho_\lambda$ is trivial on~$A^m$ for all $\lambda$ in a set~$E$ of positive measure (where $A^m = \cl{\{a^m \mid a \in A\}}$). This means $\rho_\lambda(A^m)$ fixes every element of~$F_\lambda$ (for all $\lambda \in E$), so $\pi(A^m)$ has a nonzero fixed vector. This implies that $(G,H,A^m)$ has \relT.

Note that $A^m$ is normal in~$G$ (because it is characteristic in the normal subgroup~$A$). 
Also, the quotient $A/A^m$ is compact (because $A$ is compactly generated and abelian). Therefore $(G/A^m, H/A^m, A/A^m)$ obviously has \relT\ (because $A \subseteq H$). Combining this with the fact that $(G,H,A^m)$ has \relT\ (with approximation, by \cref{NearInvariantForNormal}), we conclude that $(G,H,A)$ has \relT, as desired.
 \end{proof}
 
 \begin{rem} \label{TripleNilpHasTMoreGen}
 The proof of \cref{TripleNilpHasT} applies somewhat more generally than is specified in the statement of the \lcnamecref{TripleNilpHasT}. More precisely, after the assumption that $HA$ is closed, it suffices to make the following two additional assumptions:
 	\begin{enumerate}
	\item For every finite-dimensional, unitary representation $\rho$ of~$M$, the closure of $\rho(M)$ has an abelian subgroup of finite index. (For example, this is true when $M$ is virtually solvable, and also when $M$ is a connected Lie group whose Levi subgroup has no compact factors.)
	\item For every finite-index, closed subgroup~$M'$ of~$M$, if $m$ is the index of $(M')^{(1)} \cap A$ in~$A$, then $m < \infty$, and there is a compact subset~$C$ of~$H$, such that $A \subseteq C \cdot A^m$.
	\end{enumerate}

Also note that every closed subgroup of a compactly generated nilpotent group is compactly generated \cite[Thm.~6, p.~38]{Platonov}. Therefore, if $M$~is nilpotent, then it would suffice to assume $M$ is compactly generated, instead of assuming that $A$~is compactly generated. 
\end{rem}

\section{Proofs of results stated in the Introduction} \label{IntroProofSect}

In this \lcnamecref{IntroProofSect}, we prove that all of the results stated in the Introduction are consequences of \cref{TripleHasT}. 
We first prove \cref{relT(GH1meas)}, \cref{relT(GH1)}, \cref{Nilpotent/N1}, and \cref{StrongT,KazhdanSetForNilp} (while mentioning an additional \lcnamecref{H/H^1cpct} and \lcnamecref{NnotNormal} along the way). These are followed by \cref{SemiNilpotent}, which is a special case of $(\ref{SemiEquiv-pair} \Leftrightarrow \ref{SemiEquiv-Amen})$ of \cref{SemiEquiv} below. 

Recall that, as stated in \cref{SecondCountable}, all locally compact groups are assumed to be second countable, and therefore $\sigma$-compact.

\begin{proof}[\bf Proof of \cref{relT(GH1meas)}]
This is the special case of \cref{TripleHasT} in which $G = H$.
\end{proof}

\begin{proof}[\bf Proof of \cref{relT(GH1)}]
This is a special case of \cref{relT(GH1meas)} (see \fullcref{WhenInvMeas}{center}).
\end{proof}

The following immediate consequence of \cref{relT(GH1)} is a generalization of \cite[Cor.~3.5.3, p.~177]{BHV} (which is the special case where $G = H$).

\begin{cor} \label{H/H^1cpct}
Let $H$ be a closed subgroup of a locally compact group~$G$, and let $A$ be a closed subgroup of the center of~$H$, such that $A$ is normal in~$G$. If $(G/A, H/A)$ has \relT, and $H/H^{(1)}$ is compact, then $(G, H)$ has \relT. 
\end{cor}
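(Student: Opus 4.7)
The plan is to combine \cref{relT(GH1)} with the compactness hypothesis on $H/H^{(1)}$. Applying \cref{relT(GH1)} directly to the given data yields that $(G, H^{(1)})$ has \relT. It then remains to upgrade from $H^{(1)}$ to $H$, using that $H/H^{(1)}$ is compact. This upgrade is precisely the ``relative version'' of the classical fact \cite[Cor.~3.5.3, p.~177]{BHV} that $H$ has Kazhdan's Property~(T) whenever $H^{(1)}$ does and $H/H^{(1)}$ is compact, and the standard averaging/approximation argument adapts without essential change to our relative setting.

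In more detail, let $\pi$ be a unitary representation of $G$ on a Hilbert space $\hilbert$ with almost-invariant vectors, and set $V = \hilbert^{\pi(H^{(1)})}$. Since $H$ normalizes its commutator subgroup (and hence its closure $H^{(1)}$), the subspace $V$ is $\pi(H)$-invariant, and the restriction of $\pi|_H$ to $V$ factors through the compact group $H/H^{(1)}$. The key step is to produce almost-$H$-invariant vectors in $V$. To do so, I would fix $\epsilon > 0$, choose $\delta > 0$ small (depending on $\epsilon$), and apply \cref{NearInvariantForPair} to the pair $(G, H^{(1)})$: this yields a compact $Q \subseteq G$ and $\epsilon' > 0$ such that any $(Q, \epsilon')$-invariant unit vector $\xi$ for $\pi$ satisfies $\|\xi - \eta\| < \delta$ for some $\eta \in V$. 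Such a $\xi$ exists by almost-invariance of $\pi$; enlarging $Q$ to contain any prescribed compact subset of $H$ and applying the triangle inequality then bounds $\|\pi(h)\eta - \eta\|$ by a controlled multiple of $\epsilon$ on that compact set, while keeping $\|\eta\|$ bounded away from $0$.

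Since $H/H^{(1)}$ is compact, it has Kazhdan's Property~(T), so any unitary representation of $H/H^{(1)}$ with almost-invariant vectors has a nonzero invariant vector. Applying this to the representation of $H/H^{(1)}$ on $V$, whose almost-invariant vectors we have just produced, yields a nonzero $H$-invariant vector in $\hilbert$, whence $(G,H)$ has \relT. I expect the only real bookkeeping point to be the careful choice of $\delta$ and $\epsilon'$ in terms of $\epsilon$ so that the triangle-inequality estimate delivers genuine almost-$H$-invariance of $\eta$, but this is routine and no deeper obstacle is anticipated.
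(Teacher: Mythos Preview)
Your proposal is correct and matches the paper's intended argument: the paper presents this corollary without proof, calling it an ``immediate consequence'' of \cref{relT(GH1)} and noting it generalizes \cite[Cor.~3.5.3]{BHV}, and your two-step outline (apply \cref{relT(GH1)} to get $(G,H^{(1)})$ has \relT, then upgrade to~$H$ via compactness of $H/H^{(1)}$) is exactly that immediate consequence. The details you supply for the upgrade step---using \cref{NearInvariantForPair} to push almost-invariant vectors into $\hilbert^{H^{(1)}}$, observing that the $H$-action there factors through the compact quotient, and invoking Property~(T) for compact groups---are the standard ones and go through without difficulty; note in particular that since $H/H^{(1)}$ is compact you may fix once and for all a compact $C\subseteq H$ with $H=C\cdot H^{(1)}$, so only a single compact set needs to be handled in the estimates.
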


\begin{proof}[\bf Proof of \cref{Nilpotent/N1}]
The direction ($\Rightarrow$) is obvious. The other direction follows easily from \cref{relT(GH1)} (or,  if the reader prefers, from \cref{relT(GH1meas)}, \ref{TripleHasT}, or~\ref{TripleNilpHasT}), by induction on the nilpotence class of~$N$.
\end{proof}

\begin{rem} \label{NnotNormal}
In the statement of \cref{Nilpotent/N1}, the assumption that $N$ is normal can be weakened slightly, to the assumption that $N$ has a central series $N = N_0 \triangleright N_1 \triangleright N_2 \triangleright \cdots \triangleright N_c = \{e\}$, such that $N_i \triangleleft G$, for all~$i > 0$. (The subgroup $N = N_0$ does not need to be normal in~$G$.)
\end{rem}

\begin{proof}[\bf Proof of \cref{StrongT}]
Since ($\Rightarrow$) is obvious, we prove only ($\Leftarrow$). Let $A = Z(N) \cap N^{(1)}$. By induction on the nilpotence class of~$N$, we may assume that $\bigl( H \ltimes (N/A), H, N/A \bigr)$ has \relT. Then the desired conclusion is immediate from \cref{TripleHasT}, by letting $G = H \ltimes N$ and $M = N$.
\end{proof}

\begin{proof}[\bf Proof of \cref{KazhdanSetForNilp}]
Let $G = G_0 \triangleright G_1 \triangleright G_2 \triangleright \cdots \triangleright G_k = \{e\}$ be:
	\begin{itemize}
	\item the descending central series of~$G$, if $G$ is nilpotent,
	or
	\item the derived series of~$G$, if $G$ is a connected, real split, solvable Lie group.
	\end{itemize}
In either case, we have $G_k = \{e\}$ for some~$k$. By induction on~$k$, we may assume the image of~$Q$ in $G/G_{k-1}$ is a Kazhdan set for $G/G_{k-1}$. Applying \cref{WhenInvMeas,KazhdanSetForG/A} (with $A = G_{k-1}$) tells us that $Q$ is a Kazhdan set for the pair $(G,G_1)$. By combining this with the fact that the image of~$Q$ in $G/G_1 = G^{ab}$ is a Kazhdan set for~$G/G_1$, we conclude that $Q$ is a Kazhdan set for~$G$.
\end{proof}

Our next goal is \cref{SemiEquiv}, which is an extension of \cref{SemiNilpotent} that also incorporates \cref{StrongT}. Its proof uses the following result.
%
%


\begin{prop}[Raja {\cite[Lem.~3.1]{Raja}}] \label{SemiTripleAmen}
Let $H$ be a locally compact group that acts on a $1$-connected, abelian Lie group~$N$.
Then $(H \ltimes N, H, N)$ has \relT\ if and only if there does not exist a closed, connected, $H$-invariant, proper subgroup~$L$ of~$N$, such that $\Int_{N/L}(H)^\bullet$ is amenable.
\end{prop}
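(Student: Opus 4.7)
My plan is to translate \relT\ for the triple $(H\ltimes N, H, N)$ into an invariant-measure condition on the projective space of the dual $\dual N\cong\RR^n$, and then to realize that measure via an amenable quotient. For any unitary representation $\pi$ of $H\ltimes N$ and unit vector $\xi$, the spectral theorem for $\pi|_N$ furnishes a probability measure $\mu_\xi$ on $\dual N$; almost-$H$-invariance of $\xi$ implies asymptotic $H$-invariance of $\mu_\xi$ on compacta of $\dual N$, and $N$-invariant vectors correspond to vectors whose spectral measure is a Dirac at $0$. The preliminary claim I aim to establish is that the triple has \relT\ if and only if there is no $H$-invariant Borel probability measure on the projective space $P(\dual N)$.

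To establish this preliminary claim, for the forward direction (contrapositive) I would start from an almost-$H$-invariant sequence of unit vectors with no $N$-invariant accumulation, push the corresponding spectral measures on $\dual N\setminus\{0\}$ (after radial normalization) to $P(\dual N)$, and use weak-$*$ compactness of $P(\dual N)$ to extract an $H$-invariant limit. For the converse, starting from an $H$-invariant measure $\nu$ on $P(\dual N)$, I would smear $\nu$ with a log-scale radial distribution of growing width (so that fixed $H$-scalings act by small perturbations) to obtain a sequence of asymptotically $H$-invariant probability measures on $\dual N\setminus\{0\}$, and use each as the spectral data of a cyclic vector for $H\ltimes N$ on $L^2(\dual N,\mu_r)$, producing almost-$H$-invariant vectors with no $N$-invariant ones.

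With the preliminary claim in hand, the equivalence with the $L$-condition becomes direct. For ($\Leftarrow$), contrapositively: given an $H$-invariant $\nu$ on $P(\dual N)$, let $W\subseteq\dual N$ be the smallest closed linear subspace whose projectivization contains $\mathrm{supp}(\nu)$---closed, $H$-invariant, and nontrivial---and set $L:=W^\perp\subseteq N$, yielding a closed, connected, proper, $H$-invariant subgroup with $\dual{(N/L)}\cong W$. Then $\nu$ descends to an $\Int_{N/L}(H)^\bullet$-invariant probability measure on $P(W)$ whose support is not contained in any proper projective subspace, and a Furstenberg-type theorem will force $\Int_{N/L}(H)^\bullet$ to be amenable. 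For ($\Rightarrow$): given an $L$ with $\bar H:=\Int_{V}(H)^\bullet$ amenable (where $V=N/L$), amenability yields a $\bar H$-invariant probability measure on the compact space $P(\dual V)$; the converse construction from the preliminary claim then produces a representation of $H\ltimes V$ witnessing failure of the triple \relT, and inflating along $H\ltimes N\twoheadrightarrow H\ltimes V$ completes the argument.

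The hardest step will be the amenability criterion invoked in the ($\Leftarrow$) direction: a closed subgroup of $\GL(W)$ preserving a probability measure on $P(W)$ whose support is not contained in any proper projective subspace must be amenable. This is a classical but nontrivial dynamical fact in the spirit of the Furstenberg lemma and Borel density; the surrounding spectral-theoretic bookkeeping and weak-$*$ compactness arguments are routine once the projective reformulation is in place.
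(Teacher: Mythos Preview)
The paper does not supply its own proof of this proposition: it is quoted verbatim from Raja's paper and used as a black box. So there is no argument in the paper to compare against, and your proposal should be judged on its own merits.

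Your outline is the standard one and is essentially correct. The ``preliminary claim'' (triple has \relT\ if and only if there is no $H$-invariant probability measure on $P(\dual N)$) is exactly the projective--spectral reformulation that underlies the usual proofs for semidirect products with abelian kernel, and your two directions for it are the right ones. Two comments, one on each direction of the main statement:

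\textbf{The ($\Rightarrow$) direction (existence of $L$ implies failure of \relT).} Your route is valid but roundabout, and the ``smearing'' construction you invoke (turning an invariant measure on $P(\dual V)$ into an almost-$H$-invariant probability on $\dual V\setminus\{0\}$) needs care: a log-uniform radial factor of finite width is \emph{not} $H$-quasi-invariant (the supports shift), so you cannot form the $L^2$ representation without first smoothing the radial distribution (e.g.\ a Gaussian in the log variable). A much simpler argument bypasses all of this, and is the one the paper uses elsewhere (see the note attached to the implication $\ref{SemiEquiv-pairAbel}\Rightarrow\ref{SemiEquiv-Amen}$ of \cref{SemiEquiv}): since $\bar H=\Int_{N/L}(H)^\bullet$ is amenable and $V=N/L$ is abelian, the group $\bar H\ltimes V$ is amenable, so its left regular representation has almost-invariant vectors; pull it back along $H\ltimes N\to \bar H\ltimes V$. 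The restriction to $H$ still has almost-invariant vectors, while $V$ (hence $N$) has no nonzero invariant vector because $V$ is noncompact. This gives the contrapositive in one line.

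\textbf{The ($\Leftarrow$) direction (invariant measure on $P(\dual N)$ yields $L$).} You have identified the crux correctly: once $W$ is taken to be the span of $\mathrm{supp}(\nu)$, one needs that a closed subgroup of $\GL(W)$ preserving a probability measure on $P(W)$ whose support spans $W$ has amenable closure. Be aware that this is \emph{not} the bare Furstenberg lemma, which only gives compactness in $\mathrm{PGL}(W)$ under the stronger hypothesis that $\nu$ is not supported on a \emph{finite union} of proper projective subspaces. The amenability conclusion requires an induction on $\dim W$: in the non-degenerate case Furstenberg gives compact-by-scalar, hence amenable; in the degenerate case one passes to a finite-index subgroup stabilizing each member of the finite family of proper subspaces, restricts $\nu$, and applies the inductive hypothesis on each piece, using that the pieces together span $W$ to conclude amenability of the whole. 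This is indeed classical, but you should expect to write it out rather than cite it as a single named lemma.
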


Although our main interest is in groups that are locally compact, we state the following result without this assumption on~$H$:

\begin{cor} \label{SemiEquiv}
If a topological group~$H$ acts on a $1$-connected, nilpotent Lie group~$N$, then the following are equivalent:
	\begin{enumerate}
	\item \label{SemiEquiv-pair}
The pair $(H \ltimes N, N)$ has \relT.
	\item \label{SemiEquiv-pairAbel}
The pair $(H \ltimes N^{ab}, N^{ab})$ has \relT.
	\item \label{SemiEquiv-triple}
The triple $(H \ltimes N, H, N)$ has \relT.
	\item \label{SemiEquiv-tripleAbel}
The triple $(H \ltimes N^{ab}, H, N^{ab})$ has \relT.
	\item \label{SemiEquiv-Amen}
For every closed, connected, $H$-invariant, proper subgroup~$L$ of~$N$ that contains~$N^{(1)}$, the group $\Int_{N/L}(H)^\bullet$ is \underline{not} amenable. 
	\item \label{SemiEquiv-FiniteSet}
	There exists a \underline{finite} subset $Q$ of~$H$, and $\epsilon > 0$, such that if $\pi$~is any unitary representation of $H \ltimes N$ that has nonzero $(Q,\epsilon)$-invariant vectors, then $\pi$~has nonzero $N$-invariant vectors.
	\end{enumerate}
\end{cor}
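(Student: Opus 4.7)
The plan is to establish the cycle $(5) \Rightarrow (4) \Rightarrow (3) \Rightarrow (1) \Leftrightarrow (2) \Rightarrow (5)$ using the three main prior theorems as black boxes, and then connect condition~$(6)$ to this cycle. The equivalence $(1) \Leftrightarrow (2)$ is a direct application of \cref{Nilpotent/N1} to $G = H \ltimes N$: since $N$ is a closed, nilpotent, normal subgroup of $H \ltimes N$ and $(H \ltimes N)/N^{(1)} \cong H \ltimes N^{ab}$, the pairs appearing in \cref{Nilpotent/N1} are precisely $(H \ltimes N, N)$ and $(H \ltimes N^{ab}, N^{ab})$. The equivalence $(3) \Leftrightarrow (4)$ is \cref{StrongT} applied verbatim. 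For $(4) \Leftrightarrow (5)$, I would invoke \cref{SemiTripleAmen} for the abelian Lie group $N^{ab}$, using the bijection $L \mapsto L/N^{(1)}$ between closed, connected, $H$-invariant, proper subgroups of $N$ containing $N^{(1)}$ and closed, connected, $H$-invariant, proper subgroups of $N^{ab}$, together with the identification $\Int_{N/L}(H)^\bullet = \Int_{N^{ab}/(L/N^{(1)})}(H)^\bullet$. Finally, $(3) \Rightarrow (1)$ is immediate: any representation of $H \ltimes N$ with almost-invariant vectors has $(Q,\epsilon)$-invariant vectors for every compact $Q \subseteq H$ and every $\epsilon > 0$.

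To close the cycle, I need $(2) \Rightarrow (5)$. This is exactly the content of the Cornulier--Tessera characterization of relative Property~(T) for pairs $(H \ltimes A, A)$ with $A$ abelian that is cited in the introduction between \cref{Nilpotent/N1} and \cref{SemiNilpotent}: applied with $A = N^{ab}$, their theorem asserts that $(H \ltimes N^{ab}, N^{ab})$ has relative Property~(T) if and only if, for every closed, connected, $H$-invariant, proper subgroup $V \subseteq N^{ab}$, the group $\Int_{N^{ab}/V}(H)^\bullet$ is not amenable. Taking $V = L/N^{(1)}$ gives $(2) \Rightarrow (5)$ by contraposition. Alternatively, given amenable $\Int_{N/L}(H)^\bullet$, one can explicitly construct a representation of $H \ltimes (N/L)$ with almost-invariant vectors but no $(N/L)$-invariant vectors out of an $H$-invariant probability measure on $\widehat{N/L} \setminus \{0\}$ supplied by amenability, and then pull back via the quotient $H \ltimes N^{ab} \twoheadrightarrow H \ltimes (N/L)$.

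For~$(6)$, the implication $(6) \Rightarrow (1)$ is trivial, since any representation with almost-invariant vectors has $(Q,\epsilon)$-invariant vectors for the specific finite $Q$ and $\epsilon > 0$ provided by~$(6)$. For the converse, I would use that the Cornulier--Tessera characterization in the abelian case actually yields a \emph{finite} Kazhdan set for $(H \ltimes N^{ab}, N^{ab})$, since the failure of amenability of each $\Int_{N^{ab}/V}(H)^\bullet$ is detected by finitely many group elements. The quantitative version in \cref{QuantitativeTripleHasT} then transfers a finite $Q \subseteq H$ and positive $\epsilon$ from the abelianization up a central series of $N$ to give~(6). The main obstacle I anticipate is the last step of $(1) \Rightarrow (6)$: one must verify that the Cornulier--Tessera characterization really produces a finite (not merely compact) Kazhdan set, which is standard but requires careful bookkeeping of the witnesses to non-amenability, and then check that iterating \cref{QuantitativeTripleHasT} along the central series of $N$ preserves finiteness of $Q$.
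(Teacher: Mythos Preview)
Your cycle of implications is logically sound, but there is a genuine gap: the hypothesis of \cref{SemiEquiv} only assumes $H$ is a \emph{topological} group, not a locally compact one. Every black box you invoke for the nontrivial directions --- \cref{Nilpotent/N1}, \cref{StrongT}, \cref{SemiTripleAmen}, and \cref{QuantitativeTripleHasT} --- is stated (and proved) only for locally compact groups, so as written your applications of these results are not justified. In particular, your derivations of $(1)\Leftrightarrow(2)$, $(3)\Leftrightarrow(4)$, and $(4)\Leftrightarrow(5)$ all break at this point.

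The paper's proof handles this issue by organizing the implications so that only the \emph{easy} (quotient/restriction) directions are used outside the locally compact setting, and then reducing the single hard implication $(5)\Rightarrow(6)$ to the locally compact case via a key trick: the closure $H^\bullet$ of the image of $H$ in $\Aut(N^{ab})$ is a Lie group, hence separable, so one can choose a \emph{countable} subgroup $\Gamma\le H$ whose image is dense in $H^\bullet$. Giving $\Gamma$ the discrete topology makes it locally compact, and density guarantees that $\Gamma$-invariant closed subgroups of $N^{ab}$ coincide with $H$-invariant ones, so condition~(5) passes to $\Gamma$. Now \cref{SemiTripleAmen} and \cref{StrongT} apply legitimately to $\Gamma$, yielding relative Property~(T) for $(\Gamma\ltimes N,\Gamma,N)$. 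Since $\Gamma$ is discrete, the standard Jolissaint argument produces a \emph{finite} Kazhdan set $Q\subseteq\Gamma\subseteq H$ automatically --- there is no need for your bookkeeping of finite witnesses to non-amenability or iterated use of \cref{QuantitativeTripleHasT}. Finally, any representation of $H\ltimes N$ restricts to one of $\Gamma\ltimes N$, giving~(6).
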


\begin{proof}
It is easy to establish 
	($\ref{SemiEquiv-triple} \Rightarrow \ref{SemiEquiv-pair}  \Rightarrow \ref{SemiEquiv-pairAbel}$)
	and 
	$(\ref{SemiEquiv-FiniteSet} \Rightarrow \ref{SemiEquiv-triple} \Rightarrow \ref{SemiEquiv-tripleAbel}  \Rightarrow \ref{SemiEquiv-pairAbel}$). 
Also, ($\ref{SemiEquiv-pairAbel} \Rightarrow \ref{SemiEquiv-Amen}$)%
\refnote{AmenQuotNotT}
 is well known \cite[Prop.~2.2 (ii'~$\Rightarrow$~i), p.~391]{CornulierValette}.

Therefore, it suffices to prove ($\ref{SemiEquiv-Amen} \Rightarrow \ref{SemiEquiv-FiniteSet}$).
Let $H^\bullet$ be the closure of the image of~$H$ in $\Aut(N^{ab})$. 
 Now, as $H^\bullet$ is a closed subgroup of the Lie group $\Aut(N^{ab})$, it is separable.
Therefore, there is a countable subgroup~$\Gamma$ of~$H$ whose image is dense in~$H^\bullet$. 
Every $\Gamma$-invariant, closed subgroup of~$N^{ab}$ is also $H^\bullet$-invariant, and is therefore $H$-invariant. If we give $\Gamma$ the discrete topology, then $\Gamma$ is locally compact, so we see from \cref{SemiTripleAmen} that $(\Gamma \ltimes N^{ab}, \Gamma, N^{ab})$ has \relT. 
Then \cref{StrongT} tells us that  $(\Gamma \ltimes N, \Gamma, N)$ has \relT. 

By a standard argument \cite[Thm.~1.2 (a2~$\Rightarrow$~a1)]{Jolissaint}, this implies there is a finite  subset $Q$ of~$\Gamma$, and $\epsilon > 0$, such that if $\pi$~is any unitary representation of $\Gamma \ltimes N$ that has nonzero $(Q,\epsilon)$-invariant vectors, then $\pi$~has nonzero $N$-invariant vectors.
 Since every unitary representation of $H \ltimes N$ restricts to a (continuous) unitary representation of $\Gamma \ltimes N$, this completes the proof.
\end{proof}

\begin{rem}
It is obvious that if a triple $(G,H,N)$ has \relT, then the pair $(G,N)$ also has \relT. The converse does not hold in general. (For example, it is easy to see that if $N$ is infinite and discrete, then the triple $(H \ltimes N, H, N)$ never has \relT\ \cite[Rem.~2.1.8]{Jaudon}. But the pair $(H \ltimes N, N)$ may have \relT.) \Cref{SemiEquiv} shows that the converse does hold when $N$ is a $1$-connected, nilpotent Lie group (and $G = H \ltimes N$).
\end{rem}

Furthermore, the equivalence of \pref{SemiEquiv-pair} and \pref{SemiEquiv-pairAbel} in \cref{SemiEquiv} does not require $N$ to be a Lie group:

\begin{cor} \label{SemiEquivNotLie}
If a topological group~$H$ acts on a compactly generated, locally compact, nilpotent group~$N$, then the following are equivalent:
	\begin{enumerate}
	\item \label{SemiEquivNotLie-pair}
The pair $(H \ltimes N, N)$ has \relT.
	\item \label{SemiEquivNotLie-pairAbel}
The pair $(H \ltimes N^{ab}, N^{ab})$ has \relT.
	\end{enumerate}
\end{cor}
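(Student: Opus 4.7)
Proof proposal. The direction ($\ref{SemiEquivNotLie-pair} \Rightarrow \ref{SemiEquivNotLie-pairAbel}$) is immediate: any unitary representation $\sigma$ of $H \ltimes N^{ab}$ with almost-invariant vectors pulls back along the continuous quotient $H \ltimes N \twoheadrightarrow H \ltimes N^{ab}$ to a representation of $H \ltimes N$ with almost-invariant vectors, whose $N$-invariant vectors are exactly the $N^{ab}$-invariant vectors of $\sigma$.

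For the converse, observe that when $H$ is itself locally compact, the group $G := H \ltimes N$ is locally compact and contains $N$ as a closed, normal, nilpotent subgroup, so the conclusion is immediate from \cref{Nilpotent/N1}. To handle a general topological group $H$, I plan to mimic the reduction used at the end of the proof of \cref{SemiEquiv}: since $N$ is second countable locally compact (\cref{SecondCountable}), the automorphism group $\Aut(N)$ endowed with the Braconnier topology is Polish, so the closure of the image of $H$ in $\Aut(N)$ is separable. Pick a countable subgroup $\Gamma \leq H$ whose image is dense in this closure, and give $\Gamma$ the discrete topology so that $\Gamma \ltimes N$ becomes locally compact. Then I would (i) transfer relative (T) from $(H \ltimes N^{ab}, N^{ab})$ to $(\Gamma \ltimes N^{ab}, N^{ab})$; (ii) apply \cref{Nilpotent/N1} to deduce that $(\Gamma \ltimes N, N)$ has relative (T); (iii) invoke \cref{NearInvariantForPair} to extract a finite $Q \subset \Gamma$ and $\epsilon > 0$ forming a Kazhdan pair for $(\Gamma \ltimes N, N)$; and (iv) conclude by noting that every continuous unitary representation of $H \ltimes N$ restricts to one of $\Gamma \ltimes N$, so the same pair $(Q, \epsilon)$ witnesses relative (T) for $(H \ltimes N, N)$.

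The main obstacle is step (i): verifying that relative (T) for a pair $(K \ltimes A, A)$, with $A$ compactly generated locally compact abelian, depends only on the closure of the image of $K$ in $\Aut(A)$. In the Lie setting of \cref{SemiEquiv} this was routine via \cref{SemiTripleAmen}, but here $N^{ab}$ need not be a Lie group, so one must work directly. I expect to argue that failure of relative (T) for $(\Gamma \ltimes N^{ab}, N^{ab})$ would produce, via a weak-$*$ compactness argument on diagonal matrix coefficients restricted to the unitary dual, a $\Gamma$-invariant --- hence, by density of the image of $\Gamma$, $H$-invariant --- probability measure on $\dual{N^{ab}}$ that is not concentrated on the $H$-fixed points, contradicting the hypothesis of relative (T) for $(H \ltimes N^{ab}, N^{ab})$ via the measure-theoretic criterion underlying \cref{relT(GH1meas)}. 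The structure theorem $N^{ab} \cong \RR^a \times \ZZ^b \times K$ (with $K$ compact abelian) should allow one to run this argument coordinate by coordinate if needed.
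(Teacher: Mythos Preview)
Your overall strategy---reduce to a countable discrete $\Gamma \le H$, apply \cref{Nilpotent/N1} to the locally compact group $\Gamma \ltimes N$, then restrict back---differs from the paper's and would be more direct if step~(i) held. (The paper instead mods out the maximal compact subgroup of~$N$, embeds the resulting group cocompactly in a $1$-connected nilpotent Lie group~$N_1$, applies \cref{SemiEquiv} to obtain~$\Gamma$ with $(\Gamma\ltimes N_1,N_1)$ having relative~(T), and descends back to~$N$ via a result of Iozzi on finite-covolume subgroups. The point of the detour is that $N_1^{ab}\cong\RR^n$, so the amenability criterion of \cref{SemiTripleAmen}, which only sees invariant linear subspaces, handles the passage from~$H$ to~$\Gamma$.)

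Your justification of step~(i), however, is wrong. You claim that an $H$-invariant probability measure on $\dual{N^{ab}}$ not supported on $H$-fixed points would contradict relative~(T) for $(H\ltimes N^{ab},N^{ab})$, citing ``the measure-theoretic criterion underlying \cref{relT(GH1meas)}.'' No such criterion exists: the measure condition in \cref{relT(GH1meas)} is a \emph{hypothesis}, not a consequence of relative~(T). Concretely, $(\SL(2,\ZZ)\ltimes\ZZ^2,\ZZ^2)$ has relative~(T), yet Haar measure on $\dual{\ZZ^2}=\TT^2$ is $\SL(2,\ZZ)$-invariant and is certainly not a point mass at the trivial character. And going the other way, the weak-$*$ limit of the spectral measures of almost-invariant unit vectors for $\Gamma\ltimes N^{ab}$ is just~$\delta_1$, since those measures concentrate near the trivial character; you do not obtain a nontrivial invariant measure this way.

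Step~(i) can be salvaged, but by a different mechanism: mod out the maximal compact $K\le N^{ab}$, so that $N^{ab}/K\cong\RR^a\times\ZZ^b$ and $\Aut(N^{ab}/K)$ is a Lie group; then the closure $H^\bullet$ of the image of~$H$ there is locally compact, relative~(T) passes from~$H$ to~$H^\bullet$ by pulling back representations, and from~$H^\bullet$ to any~$\Gamma$ with dense image by Cornulier--Tessera's density result (the abelian case of \cref{TIffDense}); finally recover $(\Gamma\ltimes N^{ab},N^{ab})$ using that~$K$ is compact.
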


\begin{proof} 
It suffices to prove $(\ref{SemiEquivNotLie-pairAbel} \Rightarrow \ref{SemiEquivNotLie-pair})$. Since the maximal compact subgroup of~$N$ is unique \cseebelow{NilpHasCpct}, 
it is normal in $H \ltimes N$, so there is no harm in modding it out. Therefore, we may assume that $N$ has no nontrivial compact subgroups, so $N$ is a (nilpotent) Lie group \cseebelow{open/cpct=Lie}, 
such that $N^\circ$~is $1$-connected \cseebelow{NoCpctSubgrps}
and $N/N^\circ$ is finitely generated (because $N$ is compactly generated) and torsion-free \cseebelow{NoCpctSubgrps}.
Then $N$ can be embedded as a closed, cocompact subgroup of a $1$-connected Lie group~$N_1$ \cite[Thm.~2.20, p.~42]{Raghunathan}. Every automorphism of~$N$ extends uniquely to an automorphism of~$N_1$ \cite[Cor.~1 on p.~34]{Raghunathan}, so we may form the semidirect product $H \ltimes N_1$, which contains $H \ltimes N$ as a closed, cocompact subgroup.
Since $(H \ltimes N^{ab}, N^{ab})$ has \relT, and $N$ is cocompact in $N_1$, it is easy to see that $(H \ltimes N_1^{ab}, N_1^{ab})$ has%
\refnote{N1abRelT}
 \relT.
 Namely, let $\rho \colon H \ltimes N^{ab} \to H \ltimes N_1^{ab}$ be the natural homomorphism, and let $N^\bullet$ be the closure of $\rho(N^{ab})$.  Since $(H \ltimes N^{ab},N^{ab})$ has \relT, we know that $(H \ltimes N_1^{ab},N^\bullet)$ has \relT.  Also, $N_1^{ab}/N^\bullet$ is compact (because $N_1/N$ is compact), so $\bigl( H \ltimes (N_1^{ab}/N^\bullet), N_1^{ab}/N^\bullet \bigr)$ has \relT. Therefore $(H \ltimes N_1^{ab}, N_1^{ab})$ has \relT.

Now \fullcref{SemiEquiv}{FiniteSet} provides a finitely generated subgroup~$\Gamma$ of~$H$, such that $(\Gamma \ltimes N_1, N_1)$ has \relT\ (where $\Gamma$ is given the discrete topology).
Since $N_1/N$ is compact, and $N_1$~is nilpotent, there is a unique $N_1$-invariant probability measure on $N_1/N$. The uniqueness implies that the measure is $\Gamma$-invariant, so we obtain a ($\Gamma \ltimes N_1$)-invariant probability measure on $(\Gamma \ltimes N_1)/(\Gamma \ltimes N)$. Therefore, since $(\Gamma \ltimes N_1, N_1)$ has \relT, we conclude from \cite[Prop.~2.4(1)]{Iozzi} that $(\Gamma \ltimes N, N)$ has%
\refnote{IozziProp}
 \relT.

 Since every unitary representation of $H \ltimes N$ restricts to a (continuous) unitary representation of $\Gamma \ltimes N$, this implies that $(H \ltimes N, N)$ has \relT.
\end{proof}

\section{The largest subgroup with relative Property~\textup(T\textup)} \label{LargestRelTSect}

It is easy to construct examples in which $(G,L_1)$, $(G,L_2)$, \dots, $(G,L_k)$ have \relT, but if we let $L$ be the subgroup generated by $L_1 \cup L_2 \cup \cdots \cup L_k$, then $(G, L)$ does not have \relT.
(For example, let $G$ be a simple Lie group that does not have Property~(T), and let $L_1,L_2,\ldots,L_k$ be compact subgroups that generate~$G$.) \Cref{LargestRelT} provides a situation in which this pathology does not arise. 
The proof does not require the main results proved in \cref{IntroProofSect}, but it does use several basic facts about locally compact groups and \relT.


\begin{lem}[cf.\ {\cite[Thm.~2]{Losert1} and \cite[Lem.~3.1]{Dani-convolution}}]\label{NilpHasCpct}
Every%
\refnote{NilpHasCpctPf}
compactly generated, locally compact, nilpotent group~$N$ has a unique maximal compact subgroup.
\end{lem}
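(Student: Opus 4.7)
The plan is to argue by induction on the nilpotence class $c$ of $N$, together with Platonov's theorem (already cited in \cref{TripleNilpHasTMoreGen}) that closed subgroups of a compactly generated nilpotent group are compactly generated. It suffices to construct a single compact subgroup $K_N \subseteq N$ containing every compact subgroup of $N$, since $K_N$ is then automatically the unique maximal compact subgroup.

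The base case $c = 1$ is the Pontryagin structure theorem: a compactly generated locally compact abelian group decomposes as $\RR^a \oplus \ZZ^b \oplus K_0$ with $K_0$ compact, and $K_0$ is visibly the unique maximal compact subgroup.

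For the inductive step, assume $c \ge 2$, let $Z = Z(N)$, and let $\pi \colon N \to N/Z$. The quotient $N/Z$ is compactly generated, locally compact, and nilpotent of class $c-1$, so the inductive hypothesis provides a unique maximal compact subgroup $\bar K$ of $N/Z$. Any compact subgroup of $N$ has compact image in $N/Z$, hence lies in $\bar K$, hence lies in $H := \pi^{-1}(\bar K)$, so the problem reduces to finding the maximal compact subgroup of $H$. The group $H$ is closed in $N$, so compactly generated by Platonov; it contains $Z = Z(N) \subseteq Z(H)$ with $H/Z = \bar K$ compact, so $H/Z(H)$ is compact. I now invoke the classical theorem (Schur--Baer for abstract groups, extended to the locally compact category by Grosser--Moskowitz) that $G/Z(G)$ compact forces $\overline{[G,G]}$ to be compact; applied to $G = H$, this shows $H^{(1)}$ is a compact normal subgroup. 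Then $H/H^{(1)}$ is compactly generated locally compact abelian, so by the base case it has a unique maximal compact subgroup $\bar M$; its preimage $M \subseteq H$ is an extension of the compact $H^{(1)}$ by the compact $\bar M$, hence compact. Any compact subgroup of $H$ projects into $\bar M$ and therefore lies in $M$, so $M$ is the desired unique maximal compact subgroup of $N$.

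The step I expect to require the most care is the invocation of the Schur--Baer/Grosser--Moskowitz theorem. The continuous commutator bracket factors through the compact group $(H/Z(H))^2$, making its image $C \subseteq H^{(1)}$ compact; however, the closed subgroup $\overline{\langle C \rangle} = H^{(1)}$ generated by $C$ need not a priori be compact, and ruling this out requires the boundedness of commutator products underlying the Schur--Baer theorem in its locally compact version. If one preferred to avoid citing this result, an alternative would be to first factor out the maximal compact subgroup $K_Z$ of $Z$ (provided by the base case) and use torsion-freeness of $Z/K_Z$ to control compact subgroups of $H/K_Z$ directly, at the cost of a somewhat longer argument.
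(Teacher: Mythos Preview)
Your proof is correct and shares the same inductive skeleton as the paper's (induction on the nilpotence class, pass to $N/Z$, pull back the maximal compact to $H = \pi^{-1}(\bar K)$, reduce to showing $H$ has a maximal compact). The divergence is in how you handle $H$ once you know $H/Z(H)$ is compact. You invoke the topological Schur--Baer theorem (Grosser--Moskowitz) to conclude that $H^{(1)}$ is compact, then finish via the abelian case applied to $H/H^{(1)}$. The paper instead runs a second, parallel induction: it first applies the inductive hypothesis to $N^{(1)}$ (which has smaller class and is compactly generated by Platonov) and mods out its maximal compact subgroup, so that $N^{(1)}$ has no nontrivial compact subgroups; then, with $N/Z$ compact, it shows directly that $N$ must be abelian by the elementary observation that for $g \notin Z$ with $[g,N] \subseteq Z$, the map $x \mapsto [g,x]$ is a homomorphism factoring through the compact group $N/Z$, so $[g,N]$ is a compact subgroup of $N^{(1)}$, hence trivial --- a contradiction.

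Your route is shorter but imports a nontrivial external result (which you correctly flag as the delicate step). The paper's route is entirely self-contained, using only the commutator identity $[g_1g_2,h] = [g_1,h][g_2,h]$ valid when $[g,N] \subseteq Z$; in effect it reproves the nilpotent case of Grosser--Moskowitz by hand. Your suggested alternative (mod out $K_Z$ and exploit torsion-freeness of $Z/K_Z$) is a third path, distinct from both.
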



\begin{notation}
We use $G^\circ$ for the identity component of the topological group~$G$.
\end{notation}

\begin{thm}[{}{\cite[Theorem on p.~175 (and Lem.~2.3.1, p.~54)]{MontgomeryZippin}}] \label{open/cpct=Lie}
Let $G$ be a locally compact group.Then some open subgroup~$H$ of~$G$ has a compact, normal subgroup~$C$, such that $H/C$ is a Lie group. 

Moreover, for an appropriate choice of~$H$, the compact subgroup~$C$ can be chosen to be contained in any neighborhood of the identity in~$H$.

Furthermore, if $G/G^\circ$ is compact, then we may take $H = G$.
\end{thm}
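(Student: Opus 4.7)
The plan is to deduce this structure theorem from two classical pillars: the Gleason--Yamabe theorem, which handles the almost-connected case, and van Dantzig's theorem on totally disconnected locally compact groups, which reduces the general case to the almost-connected one. I would first establish the third (almost-connected) assertion and then bootstrap it to obtain the first two.

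For the almost-connected case, where $G/G^\circ$ is compact, I would invoke the Gleason--Yamabe theorem: every such group admits, inside any prescribed neighborhood of the identity, a compact normal subgroup~$C$ with $G/C$ a Lie group. This is the deepest input and constitutes the main obstacle; it rests on the no-small-subgroups arguments from the solution to Hilbert's fifth problem, and I would cite it rather than reprove it. Applied with $H = G$, it delivers the third conclusion together with the refinement that $C$ can be chosen arbitrarily small.

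To treat a general locally compact~$G$, observe that $G/G^\circ$ is totally disconnected and locally compact, so van Dantzig's theorem furnishes a compact open subgroup $K/G^\circ$ of $G/G^\circ$. Pulling back, one obtains an open subgroup~$H$ of~$G$ containing~$G^\circ$ with $H/G^\circ$ compact. Since $H$ is open in~$G$, its identity component coincides with $G^\circ$, so $H/H^\circ$ is compact. Applying the almost-connected case already established to~$H$ then produces, for any prescribed neighborhood~$U$ of the identity, a compact normal subgroup $C \subseteq U$ of~$H$ with $H/C$ a Lie group, proving the first two assertions simultaneously. Aside from the Gleason--Yamabe input, the only routine verification is the compatibility $H^\circ = G^\circ$ for open subgroups, which is immediate from the connectedness of $G^\circ$ and the fact that $H^\circ$ is a connected subset of $G$ through the identity.
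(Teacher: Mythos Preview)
Your outline is correct, but there is nothing to compare it against: the paper does not prove this theorem. It is quoted as a classical result from Montgomery--Zippin (with a page citation) and used as a black box throughout. Your sketch---Gleason--Yamabe for the almost-connected case, then van Dantzig to reduce the general case to an almost-connected open subgroup---is the standard route to this structure theorem and is essentially what the Montgomery--Zippin reference contains.
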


The following consequence is well known.

\begin{cor} \label{NoCpctSubgrps}
Let%
\refnote{NoCpctSubgrpsPf}
$G$ be a locally compact group. If $G/G^\circ$ is compact, and $G$~has no nontrivial, compact subgroups, then $G$ is a $1$-connected Lie group.
\end{cor}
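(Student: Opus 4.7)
The plan is to combine the Montgomery–Zippin structure theorem (already stated as \cref{open/cpct=Lie}) with the classical result on maximal compact subgroups in Lie groups with finitely many components.

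First I would apply \cref{open/cpct=Lie} directly. Since $G/G^\circ$ is compact, the ``furthermore'' clause of that \lcnamecref{open/cpct=Lie} lets us take the open subgroup to be $G$ itself, producing a compact normal subgroup $C$ of $G$ such that $G/C$ is a Lie group. By hypothesis, the only compact subgroup of $G$ is trivial, so $C = \{e\}$. Consequently $G$ itself is a Lie group.

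Next, because $G$ is a Lie group, $G^\circ$ is open, and so the compact quotient $G/G^\circ$ is both discrete and compact — hence finite. Thus $G$ is a Lie group with finitely many connected components. The Iwasawa–Mostow theorem then guarantees that $G$ possesses a maximal compact subgroup $K$, and that $G$ is homeomorphic (even diffeomorphic) to $K \times \RR^n$ for some $n \ge 0$. The hypothesis that $G$ has no nontrivial compact subgroups forces $K = \{e\}$, so $G$ is homeomorphic to $\RR^n$. In particular, $G$ is connected (giving $G = G^\circ$) and simply connected, that is, $1$-connected.

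The main obstacle is really the last step: one needs to know that a Lie group with finitely many components whose maximal compact subgroup is trivial must be homeomorphic to Euclidean space. This is the Iwasawa–Mostow decomposition, which is not recalled explicitly in the excerpt but is completely standard in Lie theory. Everything else — reducing from locally compact to Lie via Montgomery–Zippin, and observing that $G/G^\circ$ is finite — is immediate from the tools already at hand.
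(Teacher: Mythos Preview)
Your proof is correct and follows essentially the same path as the paper's own argument: use the Montgomery--Zippin theorem (with the ``furthermore'' clause) to reduce to a Lie group, observe that $G/G^\circ$ is finite, and then invoke the $K \times \RR^n$ decomposition for Lie groups with finitely many components to conclude. The only cosmetic difference is that the paper cites Hilgert--Neeb for that last decomposition, while you refer to it as Iwasawa--Mostow.
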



\begin{lem}[cf.\ {\cite[Lem.~2.2]{Shalom-Algebraization}}] \label{HinFiniteProduct}
Assume $H,H_1,\ldots,H_n$ are closed subgroups of a locally compact group~$G$, and $C$ is a compact subset of~$G$, such that $C H_1 H_2 \cdots H_n$ contains~$H$. If the pair $(G, H_i)$ has \relT, for each~$i$, then $(G,H)$ has \relT.
\end{lem}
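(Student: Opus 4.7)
My plan is to invoke Jolissaint's approximation theorem (Theorem~\ref{NearInvariantForPair}) for each of the pairs $(G,H_i)$ and then exploit the product containment $H\subseteq CH_1\cdots H_n$ to promote almost-$H_i$-invariance into almost-$H$-invariance; a standard center-of-mass argument in Hilbert space will then extract a nonzero $H$-invariant vector.

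First I would fix a small $\delta>0$ (to be chosen so that $(2n+1)\delta<1$). Jolissaint's theorem will then furnish, for each $i$, a compact set $Q_i\subseteq G$ and an $\epsilon_i>0$ with the property that every $(Q_i,\epsilon_i)$-invariant unit vector lies within $\delta$ of an $H_i$-invariant vector. Taking $Q=C\cup Q_1\cup\cdots\cup Q_n$ and $\epsilon=\min(\delta,\epsilon_1,\ldots,\epsilon_n)$, I would pick a $(Q,\epsilon)$-invariant unit vector $\xi$ in an arbitrary unitary representation $\pi$ of $G$ that admits almost-invariant vectors. For each $i$ and each $h_i\in H_i$, unitarity of $\pi(h_i)$ combined with closeness to an $H_i$-fixed vector $\eta_i$ yields $\|\pi(h_i)\xi-\xi\|\le 2\|\xi-\eta_i\|<2\delta$. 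Writing an arbitrary $h\in H$ as $ch_1h_2\cdots h_n$ with $c\in C$ and $h_i\in H_i$, a telescoping estimate on the partial products $\pi(h_1)\cdots\pi(h_k)\xi$ will then give $\|\pi(h_1\cdots h_n)\xi-\xi\|<2n\delta$, whence $\|\pi(h)\xi-\xi\|\le\|\pi(c)\xi-\xi\|+2n\delta<(2n+1)\delta$ uniformly in $h\in H$.

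Since $(2n+1)\delta<1=\|\xi\|$, the closed convex hull of the orbit $\pi(H)\xi$ lies in the ball of radius $<1$ about $\xi$; its unique minimum-norm element is both $H$-invariant and equal to the orthogonal projection of $\xi$ onto $\hilbert^H$, so it provides the sought nonzero $H$-invariant vector. I do not anticipate a genuine obstacle: the telescoping bound is routine once the uniform estimate $\|\pi(h_i)\xi-\xi\|<2\delta$ is in hand, and the final extraction step is a classical consequence of weak compactness of bounded convex sets in Hilbert space. The one point worth flagging—and the reason the hypothesis allows $C$ to be merely compact rather than a subgroup—is that $C$ contributes only the single additive error $\|\pi(c)\xi-\xi\|<\epsilon\le\delta$, which the estimate absorbs without difficulty.
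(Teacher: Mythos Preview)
Your proposal is correct and follows essentially the same approach as the paper: both invoke Jolissaint's approximation theorem for each $(G,H_i)$, set $Q=C\cup\bigcup Q_i$ and $\epsilon=\min(\delta,\epsilon_i)$, telescope along the product decomposition $h=ch_1\cdots h_n$ to get a uniform bound $\|\pi(h)\xi-\xi\|<1$, and then extract a nonzero $H$-invariant vector (the paper cites \cite[Lem.~2.2]{Jolissaint} for this last step, which is precisely your convex-hull argument). Your linear telescoping bound $(2n+1)\delta$ is in fact sharper than the paper's exponential bound $4^{n+1}\delta$, but this makes no difference for the qualitative conclusion.
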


\begin{proof}
Let $\pi$ be a unitary representation of~$G$ on a Hilbert space~$\hilbert$, such that $\pi$ has almost-invariant vectors, and let $\delta = 4^{-(n+2)}$.
For each~$i$, \cref{NearInvariantForPair} provides a compact subset $Q_i$ of~$G$ and $\epsilon_i > 0$, such that if $\eta$~is any $(Q_i, \epsilon_i)$-invariant unit vector, then there is an $H_i$-invariant unit vector $\eta_i$, such that $\| \eta - \eta_i \| < \delta/2$. 
Now, let $\eta$ be a $(Q,\epsilon)$-invariant unit vector, where $Q = C \cup \bigcup_{i=1}^n Q_i$ and $\epsilon = \min(\delta, \epsilon_1,\ldots,\epsilon_n)$. Then
	$$ \text{$\| \pi(g) \eta - \eta \| < \delta$, for all $g \in H_1 \cup \cdots H_n \cup C$} .$$
This implies $\| \pi(h) \eta - \eta \| < 1/2$ for all $h \in H$.%
\refnote{FinProdClose}
So $\pi$ has a nonzero $H$-invariant vector \cite[Lem.~2.2]{Jolissaint}.
\end{proof}

\begin{lem} \label{FiniteProduct}
Let $H$ be a compactly generated, locally compact, nilpotent group, and let $\mathcal{L}$ be a collection of closed subgroups of~$H$ that generates a dense subgroup of~$H$. Then, for some~$n$, there exist $L_1,\ldots,L_n \in \mathcal{L}$, and a compact subset~$C$ of~$H$, such that $C L_1 L_2 \cdots L_n = H$.
\end{lem}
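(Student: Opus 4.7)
The plan is to reduce the lemma to two sub-claims combined with a density argument: (A) some finite sub-collection of $\mathcal{L}$ generates a subgroup whose closure is cocompact in $H$, and (B) for any finite collection $L_1,\ldots,L_n$ of subgroups of a nilpotent group, $\langle L_1,\ldots,L_n\rangle$ is itself a finite product $L_{s_1}L_{s_2}\cdots L_{s_N}$ with each $s_j\in\{1,\ldots,n\}$.

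Before starting, I would use \cref{NilpHasCpct} to reduce to the case where $H$ has no nontrivial compact subgroups. The maximal compact subgroup $K$ is unique, hence characteristic and normal; each $L_iK$ is then closed (as $K$ is compact), so $\{L_iK/K\}$ is a collection of closed subgroups of $H/K$ that still generates a dense subgroup. Solving the lemma over $H/K$ and lifting, using $KL=LK$ to push all copies of $K$ to one end and absorb them into the compact factor, yields the result in $H$.

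For sub-claim (A), I would induct on the nilpotence class $c$. When $c=1$, the structure theorem gives $H\cong\RR^a\times\ZZ^b$, and a greedy rank argument works: if the closure of the current partial sum has rank strictly less than $a+b$, then because $\langle\mathcal{L}\rangle$ is dense in $H$ its image in the nontrivial quotient is dense there, so some $L\in\mathcal{L}$ has positive-rank image, and adjoining it strictly increases the rank; this terminates in at most $a+b$ steps. For $c\ge 2$, applying the abelian case to $H^{ab}$ with the closures of images gives a finite sub-collection $L_{i_1},\ldots,L_{i_k}$ whose generated subgroup has image with cocompact closure in $H^{ab}$. I then apply the inductive hypothesis to $H^{(1)}$, which is compactly generated (by Platonov's theorem, cited in Remark~4.10) and nilpotent of class $c-1$, with the collection $\mathcal{L}'=\{\overline{\langle[L_i,L_j]\rangle}:i,j\}$; this generates a dense subgroup of $H^{(1)}$ because continuity of the commutator together with density of $\langle\mathcal{L}\rangle$ in $H$ forces $[\langle\mathcal{L}\rangle,\langle\mathcal{L}\rangle]$ to be dense in $H^{(1)}$. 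Adjoining all the $L_a,L_b$'s appearing in the commutators selected by the inner induction to $\{L_{i_j}\}$ produces a finite sub-collection of $\mathcal{L}$ whose closed generated subgroup $M$ satisfies: the image in $H^{ab}$ has cocompact closure, and $M\cap H^{(1)}$ is cocompact in $H^{(1)}$; by the short exact sequence argument $H/M$ is then compact.

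Sub-claim (B) follows by induction on $c$: trivial for $c=1$, and for $c\ge 2$ one combines the abelian normal form modulo $H^{(1)}$ with the identity $[x,y]=xyx^{-1}y^{-1}$ to unwind iterated commutators inside $H^{(1)}$ into bounded words in the $L_i$'s. Combining (A) and (B) with the density argument (that a dense subgroup $S$ of a locally compact group $G$ satisfies $G=K_0S$ for any compact neighborhood $K_0$ of the identity) yields $H=C_0\cdot\overline{\langle L_1,\ldots,L_n\rangle}=C_0K_0\cdot L_{s_1}\cdots L_{s_N}=CL_{s_1}\cdots L_{s_N}$ for $C=C_0K_0$ compact. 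The main obstacle is the inductive step of (A): one must propagate density carefully through the commutator-closure construction in $H^{(1)}$, and track how closures interact with the normal extension $H^{(1)}\triangleleft H$ (since products like $MH^{(1)}$ need not be closed) in order to conclude cocompactness of the enlarged $M$ in $H$ rather than merely cocompactness of its image in $H^{ab}$ and of its intersection with $H^{(1)}$ separately.
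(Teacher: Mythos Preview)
Your overall architecture (reduce to no compact subgroups, then combine (A) a cocompactness statement with (B) a ``finite product'' fact for nilpotent groups) is reasonable, and (B) is indeed a classical fact. But there is a genuine error in the inductive step of (A): the collection $\mathcal{L}'=\{\overline{\langle[L_i,L_j]\rangle}:i,j\}$ need \emph{not} generate a dense subgroup of $H^{(1)}$. Take $H$ to be the simply connected free nilpotent Lie group of class~$3$ on generators $a,b,c$, and $\mathcal{L}=\{L_1,L_2,L_3\}$ the three one-parameter subgroups. Then $\overline{\langle[L_i,L_j]\rangle}$ has Lie algebra spanned by $[x,y],[[x,y],x],[[x,y],y]$ for the relevant pair $\{x,y\}$, so the Lie algebra of the closed subgroup generated by $\mathcal{L}'$ is spanned by these nine elements. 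But $\gamma_2(\Lie h)$ is $11$-dimensional (three in $\gamma_2/\gamma_3$, eight in $\gamma_3$), and the ``mixed'' weight-$3$ commutators $[[a,b],c]$ and $[[a,c],b]$ are not in that span. So $\langle\mathcal{L}'\rangle$ is not dense in $H^{(1)}$, and your induction cannot proceed. The underlying reason is that $[D,D]$ is only the \emph{normal} closure of $\bigcup[L_i,L_j]$, not the subgroup they generate; the conjugation introduces exactly the mixed triple commutators you are missing.

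The paper avoids this by inducting on the \emph{lower} end of the central series rather than on $H^{(1)}$. Writing $H^{(k)}$ for (the closure of) the last nontrivial term of the lower central series, one first applies the inductive hypothesis to $H/H^{(k)}$ to get a finite product $X=L_1\cdots L_n$ and compact $C$ with $CH^{(k)}X=H$. The key point is that $H^{(k)}$ is central, so for each fixed $g\in H^{(k-1)}$ the map $x\mapsto[x,g]$ is a \emph{homomorphism} $H\to H^{(k)}$. A finite list $g_1,\ldots,g_m$ then suffices for $\prod_i[H,g_i]$ to be cocompact in the abelian group $H^{(k)}$, and since $[x,g_i]\in X\cdot g_iXg_i^{-1}$ for $x\in X$ while $[C,g_i]$ is compact, one obtains $H^{(k)}$ from a finite product of copies of $X$ and conjugates $g_iXg_i^{-1}$ together with a compact set. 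This sidesteps entirely the problem of generating $H^{(1)}$ from pairwise commutators.
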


\begin{proof}
We may assume that $H$ is a Lie group with no nontrivial compact subgroups (by modding out the maximal compact subgroup \csee{NilpHasCpct,open/cpct=Lie}).
Let $H^{(k)}$ be the closure of the last nontrivial term of the descending central series of~$H$.
The desired conclusion is easy if~$H$ is abelian (and therefore isomorphic to $\RR^m \times \ZZ^n$ for some $m$ and~$n$), so we may assume $H^{(k)} \neq H$. By induction on the nilpotence class of~$H$, we may assume that there is a finite product $X = L_1 \cdots L_n$ of subgroups in~$\mathcal{L}$, and a compact subset~$C$ of~$H$, such that $C H^{(k)} X = H$. Note that, for each $g \in H^{(k-1)}$, the map $x \mapsto [x,g]$ is a homomorphism from $H/H^{(k)}$ to $H^{(k)}$. Since $\dim H^{(k)} + \rank H^{(k)}/(H^{(k)})^\circ$ is finite, there is a finite subset $\{g_1,\ldots,g_m\}$ of $H^{(k-1)}$, such that $\prod_{i=1}^m [H,g_i]$ is dense in a cocompact subgroup of the abelian group~$H^{(k)}$. 
So there is a compact subset $C_0$ of~$H^{(k)}$, such that $C_0 \cdot \prod_{i=1}^m [H,g_i] = H^{(k)}$.
Since $H^{(k)}$ is abelian, this implies that $C_0 \cdot \prod_{i=1}^m [C, g_i] \cdot \prod_{i=1}^m [X,g_i] = H^{(k)}$. Then
	\begin{align*}
	C \cdot C_0 \cdot \prod_{i=1}^m [C, g_i]  \cdot \prod_{i=1}^m (X \cdot g_i X g_i^{-1}) \cdot X
	=
	C H^{(k)} X = H
	. & \qedhere \end{align*}
\end{proof}

\begin{cor} \label{LargestRelT}
Let $N$ be a closed, compactly generated, nilpotent, normal subgroup of a locally compact group~$G$, and let $\mathcal{T}$ be the collection of all subgroups~$L$ of~$N$, such that $(G,L)$ has \relT. Then $\mathcal{T}$ has a unique largest element~$L^\dagger$, and $L^\dagger$ is a closed, normal subgroup of~$G$.
\end{cor}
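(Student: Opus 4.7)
\textbf{Proof proposal for \cref{LargestRelT}.} The plan is to define $L^\dagger$ as the closure of the subgroup of $N$ generated by $\bigcup_{L \in \mathcal{T}} L$, and then to show that this $L^\dagger$ is itself in~$\mathcal{T}$. Closedness will be built into the definition, and normality in~$G$ will follow from the fact that $\mathcal{T}$ is stable under conjugation by~$G$: if $\xi$ is an $L$-invariant vector for a representation~$\pi$ of~$G$, then for each $g \in G$ the vector $\pi(g)\xi$ is $gLg^{-1}$-invariant, so relative Property~(T) passes from $(G,L)$ to $(G, gLg^{-1})$. Consequently $\bigcup_{L \in \mathcal T} L$ is $G$-invariant, hence so are the subgroup it generates and its closure $L^\dagger$.

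The heart of the argument is showing $(G,L^\dagger)$ has \relT. For this I would proceed as follows. Since $L^\dagger$ is a closed subgroup of the compactly generated nilpotent group~$N$, the fact noted at the end of \cref{TripleNilpHasTMoreGen} (every closed subgroup of a compactly generated nilpotent group is compactly generated) implies that $L^\dagger$ is itself compactly generated. By construction, the collection $\mathcal{T}$ generates a dense subgroup of~$L^\dagger$, so \cref{FiniteProduct} (applied with $H = L^\dagger$ and with $\mathcal{L}$ the family of those members of~$\mathcal{T}$ that are contained in~$L^\dagger$, which is all of~$\mathcal{T}$) produces finitely many subgroups $L_1,\ldots,L_n \in \mathcal{T}$ and a compact subset~$C$ of~$L^\dagger$ such that
\[
L^\dagger \;=\; C\, L_1 L_2 \cdots L_n.
\]
Since each pair $(G, L_i)$ has \relT\ by the definition of~$\mathcal{T}$, \cref{HinFiniteProduct} then yields that $(G, L^\dagger)$ has \relT, i.e.\ $L^\dagger \in \mathcal{T}$.

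Finally, $L^\dagger$ is clearly the unique largest element of~$\mathcal{T}$: any $L \in \mathcal{T}$ is, by definition of~$L^\dagger$, contained in~$L^\dagger$. I expect no serious obstacle here, since all the technical work has already been done in \cref{NilpHasCpct,open/cpct=Lie,HinFiniteProduct,FiniteProduct} and in the observation that closed subgroups of compactly generated nilpotent groups are compactly generated; the only subtlety worth noting is the step of reducing from ``subgroup generated'' to ``finite product,'' which is precisely what \cref{FiniteProduct} gives us and which genuinely uses that $L^\dagger$ is nilpotent and compactly generated. (If one is worried about whether members of~$\mathcal{T}$ must be taken closed, one observes that $(G,L)$ has \relT\ iff $(G,\cl{L})$ does, by strong continuity of the representation, so working with closed subgroups throughout causes no loss.)
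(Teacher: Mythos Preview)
Your proposal is correct and follows essentially the same approach as the paper: define $L^\dagger$ as the closure of the subgroup generated by $\bigcup_{L\in\mathcal T}L$, invoke \cref{FiniteProduct} to write $L^\dagger = C L_1\cdots L_n$ with $L_i\in\mathcal T$, and then apply \cref{HinFiniteProduct}. You are in fact slightly more explicit than the paper in justifying why \cref{FiniteProduct} applies (compact generation of $L^\dagger$ via Platonov's theorem), and your normality argument via conjugation-stability of $\mathcal T$ is a minor stylistic variant of the paper's, which instead deduces normality after establishing $L^\dagger\in\mathcal T$.
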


\begin{proof}
Let $L^\dagger$ be the closure of the subgroup generated by the subgroups in~$\mathcal{T}$. 
 \Cref{FiniteProduct} tells us there is a product $L_1 L_2 \cdots L_n$ of finitely many elements of~$\mathcal{T}$, and a compact set~$C$, such that $C L_1 L_2 \cdots L_n = L^\dagger$. Since $(G,L_i)$ has \relT, for each~$i$, it follows from \cref{HinFiniteProduct} that $(G, L^\dagger)$ has \relT. So $L^\dagger \in \mathcal{T}$. By definition, $L^\dagger$ contains every element of~$\mathcal{T}$, so this implies that $L^\dagger$ is the unique largest element of~$\mathcal{T}$.
 
 Furthermore, $L^\dagger$ is closed by definition. Also, if $L$ is any conjugate of~$L^\dagger$, then $(G,L)$ has \relT\  (because $(G,L^\dagger)$ has \relT), so $L \subseteq L^\dagger$. Therefore, $L^\dagger$ is normal.
\end{proof}

We also have the following weaker conclusion without the assumption that $G$ is locally compact.

\begin{cor}\label{LargestRelTNormal}
Let $N$ be a closed, locally compact, compactly generated, nilpotent, normal subgroup of a topological group~$G$, and let $\mathcal{T_\triangleleft}$ be the collection of all subgroups~$L$ of~$N$, such that $(G,L)$ has \relT\ and $L \triangleleft G$. Then $\mathcal{T_\triangleleft}$ has a unique largest element~$L^\dagger_\triangleleft$, and $L^\dagger_\triangleleft$ is a closed, normal subgroup of~$G$.
\end{cor}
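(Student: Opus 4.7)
The plan is to adapt the proof of \cref{LargestRelT} so that it no longer needs $G$ to be locally compact, replacing the use of \cref{HinFiniteProduct} (which relies on Jolissaint's quantitative version of \relT, and hence on local compactness) by a normality-based projection argument. I would define $L^\dagger_\triangleleft$ to be the closure of the subgroup generated by $\bigcup\mathcal{T}_\triangleleft$. This subgroup is automatically closed, contained in $N$, and normal in $G$ (each element of $\mathcal{T}_\triangleleft$ is normal). As a closed subgroup of the compactly generated nilpotent group $N$, it is locally compact, nilpotent, and itself compactly generated (by the theorem of Platonov cited in \cref{TripleNilpHasTMoreGen}). Applying \cref{FiniteProduct} with $H=L^\dagger_\triangleleft$ and $\mathcal{L}=\mathcal{T}_\triangleleft$ produces $L_1,\dots,L_n\in\mathcal{T}_\triangleleft$ and a compact subset $C\subseteq L^\dagger_\triangleleft$ with $CL_1L_2\cdots L_n=L^\dagger_\triangleleft$. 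Once I show $L^\dagger_\triangleleft\in\mathcal{T}_\triangleleft$, uniqueness as the largest element and normality are immediate.

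The technical heart of the proof is the following \textbf{subclaim}: whenever $L$ is a closed normal subgroup of $G$ for which $(G,L)$ has \relT, and $\pi$ is a unitary representation of $G$ with almost-invariant vectors, the restriction of $\pi$ to the (automatically $G$-invariant) subspace $\hilbert^L$ still has almost-invariant vectors as a $G$-representation. Granting the subclaim, I would first iterate it along $L_1,L_2,\dots,L_n$ to conclude that $\hilbert^{L_1L_2\cdots L_n}$ is nonzero for every $\pi$ with almost-invariant vectors; continuity of $\pi$ upgrades this to $\hilbert^{L'}\neq 0$ for $L':=\cl{L_1\cdots L_n}$, proving $(G,L')\in\mathcal{T}_\triangleleft$. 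Next, because $L^\dagger_\triangleleft\subseteq CL'$, the quotient $L^\dagger_\triangleleft/L'$ is compact. Applying the subclaim a final time with $L=L'$, the restriction of $\pi$ to $\hilbert^{L'}$ has almost-invariant vectors as a $G$-representation, hence also as a representation of the compact quotient $L^\dagger_\triangleleft/L'$ (which is how $L^\dagger_\triangleleft$ acts on $\hilbert^{L'}$, since $L'$ fixes it pointwise). Compact groups have Property~(T), so this yields an $L^\dagger_\triangleleft$-invariant vector in $\hilbert^{L'}\subseteq\hilbert$, completing the proof that $(G,L^\dagger_\triangleleft)$ has \relT.

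\textbf{The main obstacle is the subclaim}, since in the locally compact setting it would follow from \cref{NearInvariantForPair}, whose proof relies on a diagonal/direct-sum argument over a cofinal exhaustion of $G$ by compact sets. My plan is a direct projection argument in the spirit of \cref{NearInvariantForNormal}: let $P$ be the orthogonal projection onto $\hilbert^L$; because $L\triangleleft G$, the projection $P$ commutes with $\pi(G)$, so the orthogonal complement $(\hilbert^L)^\perp$ is $G$-invariant and contains no nonzero $L$-invariant vector. The contrapositive of \relT\ applied to the restriction of $\pi$ to $(\hilbert^L)^\perp$ then provides a single compact $Q_0\subseteq G$ and $\epsilon_0>0$ such that no nonzero vector in $(\hilbert^L)^\perp$ is $(Q_0,\epsilon_0)$-invariant. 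Given a target pair $(Q,\epsilon)$, I would choose $\xi$ nonzero and $(Q\cup Q_0,\delta)$-invariant for some $\delta$ with $\delta/\epsilon_0\ll\epsilon$; since $(I-P)\xi$ inherits the absolute bound $\|\pi(g)(I-P)\xi-(I-P)\xi\|\le\delta\|\xi\|$ for $g\in Q_0$, the non-existence of $(Q_0,\epsilon_0)$-invariant vectors in $(\hilbert^L)^\perp$ forces $\|(I-P)\xi\|<(\delta/\epsilon_0)\|\xi\|$, and then $P\xi$ is a nonzero $(Q,\epsilon)$-invariant vector in $\hilbert^L$. The key point is that this argument uses only one pair $(Q_0,\epsilon_0)$ and no countable exhaustion of $G$, so it sidesteps the need for $G$ to be locally compact or $\sigma$-compact.
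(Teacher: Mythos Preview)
Your proposal is correct and follows essentially the same approach as the paper. The paper's proof simply observes that the only use of local compactness in the proof of \cref{LargestRelT} is the appeal to \cref{NearInvariantForPair} inside \cref{HinFiniteProduct}, and that for \emph{normal} subgroups the conclusion of \cref{NearInvariantForPair} follows from the projection argument of \cref{NearInvariantForNormal}; your ``subclaim'' and its proof are exactly that projection argument, spelled out in detail.
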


\begin{proof}
Nothing in the proof of \cref{LargestRelT} relies on the assumption that $G$ is locally compact, other than the application of \cref{NearInvariantForPair} in the proof of \cref{HinFiniteProduct}. Although \cref{NearInvariantForPair} may not be true for general topological groups, its conclusion holds when $H \triangleleft G$, by the same standard argument that is used in the proof of \cref{NearInvariantForNormal}.
\end{proof}

\section{Relative Property (T) and amenability} \label{GeneralizeSemiNilpotentSect}

The main result in this \lcnamecref{LargestRelTSect} is \cref{N/LisAmen}, which provides additional information about the subgroup~$L^\dagger$ of \cref{LargestRelT} (under a connectivity assumption on~$N$). 
This implies \cref{IffAmenAlmConn}, which is a generalization of \cref{SemiNilpotent} that does not require the subgroup~$N$ to be a Lie group. 
The statements of these results require the following extension of \cref{IntOnAbel} to this setting:

\begin{notation}
Suppose $N$ and~$L$ are closed, normal subgroups of a locally compact group~$G$, such that $L \subseteq N$, and $N/L$ is a Lie group.
	\begin{enumerate}
	\item $\Int_{N/L} \colon G\to \Aut(N/L)$ is the natural map defined by the action of~$G$ on $N/L$ by conjugation.
	\item We use $\Int_{N/L}(G)^\bullet$ to denote the \emph{closure} of the image of this homomorphism.
	\end{enumerate}
\end{notation}

\begin{cor} \label{N/LisAmen}
Let $N$ be a closed, locally compact, nilpotent, normal subgroup of a topological group~$G$, such that $N/N^\circ$ is compact. Then $N$ has a unique largest subgroup~$L^\dagger$, such that $(G,L^\dagger)$ has \relT. Furthermore, $L^\dagger$~is a closed, normal subgroup of~$G$, the quotient $N/L^\dagger$~is a $1$-connected Lie group, and $\Int_{N/L^\dagger}(G)^\bullet$ is amenable.
\end{cor}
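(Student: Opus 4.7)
The plan is to assemble the corollary from three ingredients in order: existence of $L^\dagger$, $1$-connectedness of $\bar N := N/L^\dagger$ as a Lie group, and amenability of $\Int_{\bar N}(G)^\bullet$. Since $N/N^\circ$ is compact, $N$ is compactly generated, so \cref{LargestRelT} immediately yields $L^\dagger$ when $G$ is locally compact; for general topological $G$, \cref{LargestRelTNormal} supplies the largest \emph{normal} rel T subgroup, which coincides with the largest rel T subgroup since each conjugate of a rel T subgroup remains rel T, and by \cref{FiniteProduct} together with the normal variant of \cref{HinFiniteProduct} noted in the proof of \cref{LargestRelTNormal}, the normal closure of any rel T subgroup in $N$ is also rel T. In either case $L^\dagger$ is closed and normal in $G$.

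For the $1$-connected Lie statement, the unique maximal compact subgroup $K \subseteq \bar N$ given by \cref{NilpHasCpct} is characteristic in $\bar N$ and hence $G$-invariant, so its preimage $L'$ in $N$ contains $L^\dagger$. Since $K$ is compact it has property~\textup{(T)}, so $(G/L^\dagger, K)$ has \relT; combining this with $(G, L^\dagger)$ via the normal-subgroup combination principle (valid by \cref{NearInvariantForNormal} since $L^\dagger \triangleleft G$) gives $(G, L')$ has \relT, and so by maximality $L' = L^\dagger$ and $K = \{e\}$. As $\bar N/\bar N^\circ$ is a continuous quotient of the compact $N/N^\circ$, it is compact, and \cref{NoCpctSubgrps} finishes this step.

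Set $H := \Int_{\bar N}(G)^\bullet$ and suppose for contradiction that $H$ is not amenable. I would choose $\bar L \subseteq \bar N$ minimal among closed, connected, $G$-invariant subgroups with $\Int_{\bar L}(G)^\bullet$ non-amenable; this minimum exists by finite-dimensionality of $\bar N$, and $\bar L \neq \{e\}$. For any proper closed connected $G$-invariant $M \subsetneq \bar L$, minimality forces $\Int_M(G)^\bullet$ to be amenable, and the kernel of the restriction $\Int_{\bar L}(G)^\bullet \to \Int_{\bar L/M}(G)^\bullet$ is an extension of a continuous image of $\Int_M(G)^\bullet$ by the nilpotent group of automorphisms of $\bar L$ acting trivially on both $M$ and $\bar L/M$; so this kernel is amenable. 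The extension principle for amenability then forces $\Int_{\bar L/M}(G)^\bullet$ to be non-amenable, which verifies condition \fullcref{SemiEquiv}{Amen} of \cref{SemiEquiv} for the action of $H' := \Int_{\bar L}(G)^\bullet$ on the $1$-connected nilpotent Lie group $\bar L$. Therefore $(H' \ltimes \bar L, \bar L)$ has \relT, and \cref{Nilpotent/N1} reduces this to $(H' \ltimes \bar L^{ab}, \bar L^{ab})$ having \relT. Since \relT\ for an abelian normal subgroup is governed by the invariant-measure condition on its unitary dual, and therefore depends only on the closure of the acting group's image in $\Aut(\bar L^{ab})$, and since $G/L^\dagger$ induces the same closure there, $((G/L^\dagger)/\bar L^{(1)}, \bar L^{ab})$ has \relT\ as well; \cref{Nilpotent/N1} in the reverse direction then gives $(G/L^\dagger, \bar L)$ has \relT. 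Combining with $(G, L^\dagger)$ yields $(G, L'')$ has \relT\ where $L'' \subseteq N$ is the preimage of $\bar L$; since $\bar L \neq \{e\}$ we have $L'' \supsetneq L^\dagger$, contradicting maximality.

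The main obstacle is the abelian-transfer step at the end: to deduce \relT\ for $((G/L^\dagger)/\bar L^{(1)}, \bar L^{ab})$ from \relT\ for $(H' \ltimes \bar L^{ab}, \bar L^{ab})$, one needs that \relT\ for an abelian normal subgroup is determined by the closure of the acting group's image in its automorphism group — a standard invariant-measure characterization not recorded verbatim in the paper. An alternative route uses the finite Kazhdan-set criterion \fullcref{SemiEquiv}{FiniteSet} together with the quantitative \cref{QuantitativeTripleHasT}, transferring Kazhdan constants from $H'$ to a finite subset of $G/L^\dagger$ by continuity through the dense image of $G/L^\dagger$ in $H'$.
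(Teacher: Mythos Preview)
Your argument is essentially sound and takes a route genuinely different from the paper's, but the ``main obstacle'' you identify is a phantom.  \Cref{SemiEquiv} is stated for an arbitrary topological group acting on a $1$-connected nilpotent Lie group, and its condition~\pref{SemiEquiv-Amen} involves only the closures $\Int_{\bar L/M}(\cdot)^\bullet$, which are identical whether the acting group is $H' = \Int_{\bar L}(G)^\bullet$ or $G/L^\dagger$ itself (closed $G$-invariant subgroups of $\bar L$ coincide with $H'$-invariant ones).  So your verification of~\pref{SemiEquiv-Amen} already applies with $H = G/L^\dagger$; then~\pref{SemiEquiv-pair} gives $\bigl((G/L^\dagger)\ltimes\bar L,\bar L\bigr)$ \relT, and \cref{SemiToPair} yields $(G/L^\dagger,\bar L)$ \relT\ with no transfer at all.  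Your kernel-of-restriction argument is also just the contrapositive of \cref{L+N/LisAmen}, whose proof makes no use of local compactness of~$G$.

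There is, however, a gap in your first paragraph.  The ``normal variant'' of \cref{HinFiniteProduct} invoked in \cref{LargestRelTNormal} requires each $H_i$ to be \emph{normal} in~$G$ (that is where the substitute for \cref{NearInvariantForPair} comes from), so it does not apply to the $G$-conjugates of a non-normal rel-T subgroup, and your normal-closure claim is unjustified.  The paper sidesteps this by working with $L^\dagger_\triangleleft$ throughout and leaving the equality $L^\dagger = L^\dagger_\triangleleft$ implicit; one clean way to close it a posteriori is to note that once $\bar N$ is $1$-connected and $\Int_{\bar N}(G)^\bullet$ is amenable, the regular-representation argument (pull back via $G\ltimes\bar N \to \Int_{\bar N}(G)^\bullet\ltimes\bar N$, then \cref{SemiToPair}) shows $\bar N$ has no nontrivial rel-T subgroup at all.

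For comparison, the paper's amenability proof is inductive rather than by minimal counterexample: it first passes to a semidirect product via \cref{SemiToPair}, sets $A = N^{(1)}\cap Z(N)$, and applies the inductive hypothesis to $N/A$ to get a subgroup $L'/A$ with \relT\ and amenable $\Int_{N/L'}(G)^\bullet$.  The key step is that \cref{relT(GH1)} then forces $(L')^{(1)}=\{e\}$, so $L'$ is a connected abelian group to which \cref{Abel/LisAmen} applies, and \cref{L+N/LisAmen} assembles the two amenable pieces.  Your approach trades this induction for a single appeal to the full equivalence \cref{SemiEquiv}, which is more direct but relies on a heavier prepackaged result.
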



The following consequence of ($\ref{SemiEquiv-Amen} \Rightarrow \ref{SemiEquiv-pairAbel}$) of \cref{SemiEquiv} is essentially the special case of \cref{N/LisAmen} in which $N$~is a connected, abelian Lie group. It will be the basis of a proof by induction.

\begin{cor}[cf.\ {\cite[Prop.~2.2 (i~$\Rightarrow$~ii$'$)]{CornulierValette} and \cite[Cor.~3.2]{Raja}}] \label{Abel/LisAmen}
Assume a topological group~$H$ acts on a connected, abelian Lie group~$N$. Then $N$ contains a closed, connected, $H$-invariant, normal subgroup~$L$,%
\refnote{Abel/LisAmenPf}
such that $(H \ltimes N,L)$ has \relT, and $\Int_{N/L}(H)^\bullet$ is amenable.
\end{cor}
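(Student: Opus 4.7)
The plan is to induct on $\dim N$, using $(\ref{SemiEquiv-Amen} \Rightarrow \ref{SemiEquiv-pairAbel})$ of \cref{SemiEquiv} as the key tool. The base case $\dim N = 0$ is immediate (take $L = N = \{e\}$). For the inductive step, I would split according to whether $N$ is $1$-connected.

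If $N$ is not $1$-connected, let $T$ be its maximal compact subgroup, a torus of positive dimension; since $T$ is characteristic in $N$, it is $H$-invariant. The inductive hypothesis applied to $H$ acting on $N/T$ (a connected abelian Lie group of strictly smaller dimension) supplies a closed, connected, $H$-invariant subgroup $L_0 \subseteq N/T$ such that $(H \ltimes N/T, L_0)$ has \relT\ and $\Int_{(N/T)/L_0}(H)^\bullet$ is amenable. Let $L \subseteq N$ be the preimage of $L_0$; it is closed, connected (as an extension of a torus by a connected group), $H$-invariant, and contains~$T$. Since $T$ is compact, $(H \ltimes N, T)$ has \relT, and using that $(H \ltimes N)/T \cong H \ltimes N/T$ carries $L/T$ to $L_0$, the standard tower argument (as in the proof of \cref{TripleNilpHasT}) yields that $(H \ltimes N, L)$ has \relT. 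The amenability condition $\Int_{N/L}(H)^\bullet = \Int_{(N/T)/L_0}(H)^\bullet$ is inherited directly.

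In the $1$-connected case ($N \cong \RR^n$), I would first dispose of the trivial subcase by taking $L = N$ when $(H \ltimes N, N)$ already has \relT. Otherwise, the contrapositive of $(\ref{SemiEquiv-Amen} \Rightarrow \ref{SemiEquiv-pairAbel})$ of \cref{SemiEquiv}, applicable because $N$ is $1$-connected abelian (so $N^{(1)} = \{e\}$ and $N^{ab} = N$), produces a proper closed connected $H$-invariant subgroup $L' \subsetneq N$ with $\Int_{N/L'}(H)^\bullet$ amenable. Since $L'$ is itself a proper subspace of $\RR^n$ under the restricted $H$-action, the inductive hypothesis yields $L \subseteq L'$ closed, connected, $H$-invariant, such that $(H \ltimes L', L)$ has \relT\ and $\Int_{L'/L}(H)^\bullet$ is amenable. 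Relative Property~(T) for $(H \ltimes N, L)$ now follows by restriction: any representation of $H \ltimes N$ with almost-invariant vectors restricts to one of $H \ltimes L' \subseteq H \ltimes N$ with almost-invariant vectors (compact subsets of $H \ltimes L'$ are compact in $H \ltimes N$), hence has $L$-invariant vectors. Finally, the short exact sequence $0 \to L'/L \to N/L \to N/L' \to 0$ of $H$-modules realizes $\Int_{N/L}(H)^\bullet$ inside the stabilizer of $L'/L$ in $\Aut(N/L)$, mapping to $\Int_{L'/L}(H)^\bullet \times \Int_{N/L'}(H)^\bullet$ with kernel inside the abelian group $\mathrm{Hom}(N/L', L'/L)$, exhibiting $\Int_{N/L}(H)^\bullet$ as an extension of amenable by amenable.

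The subtlest step is the last: verifying that this extension is amenable as a topological group. Since $N/L$ is a connected abelian Lie group, $\Aut(N/L)$ is itself a Lie group, so every group in sight is locally compact and amenability is preserved under extensions of locally compact groups, closing the argument.
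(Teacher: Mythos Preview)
Your argument is correct and takes a genuinely different route from the paper's. The paper first reduces to $N=\RR^n$ by modding out the maximal torus, then takes $L$ to be the \emph{largest} $H$-invariant subspace with \relT\ (using that sums of such subspaces again have \relT, via \cref{HinFiniteProduct}), and proves amenability of $\Int_{N/L}(H)^\bullet$ by contradiction: a non-amenable closure would have a noncompact simple subgroup~$S$ in its Zariski closure, and one builds an $H$-invariant subspace~$M$ on which condition~\pref{SemiEquiv-Amen} of \cref{SemiEquiv} is verified, forcing $(H\ltimes N,M)$ to have \relT\ and contradicting maximality of~$L$. Your induction avoids both the existence of a largest relative-T subspace and the Zariski-closure/semisimple-subgroup step; in exchange, you need the amenability-of-extensions argument for $\Int_{N/L}(H)^\bullet$, which is exactly the content of \cref{L+N/LisAmen}. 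So your proof trades an algebraic-group ingredient for a purely structural one that the paper already records as a separate lemma. Both approaches invoke \cref{SemiEquiv} at the crucial moment, but in opposite directions: the paper uses it to \emph{produce} \relT\ from non-amenability of all quotients, while you use its contrapositive to \emph{produce} an amenable quotient from failure of \relT.
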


The following elementary observation can reduce problems about arbitrary normal subgroups to the easier case of semidirect products.

\begin{lem} \label{SemiToPair}
Let $N$ be a closed, normal subgroup of a topological group~$G$, and let $L$ be a subgroup of~$N$. Form the semidirect product $G \ltimes N$, where $G$ acts on~$N$ by conjugation. If\/ $(G \ltimes N, L)$ has \relT, then $(G,L)$ also has \relT.
\end{lem}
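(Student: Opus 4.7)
The plan is to pull back representations along a natural homomorphism from the semidirect product back to $G$. Concretely, since $N \triangleleft G$, the action of $G$ on~$N$ by conjugation makes the multiplication map
\[
\phi \colon G \ltimes N \to G, \qquad \phi(g,n) = gn,
\]
into a continuous group homomorphism: with the convention $(g_1,n_1)(g_2,n_2) = (g_1g_2, g_2^{-1}n_1g_2 \cdot n_2)$, one checks directly that $\phi\bigl((g_1,n_1)(g_2,n_2)\bigr) = g_1 g_2 \cdot g_2^{-1} n_1 g_2 n_2 = g_1 n_1 g_2 n_2 = \phi(g_1,n_1)\phi(g_2,n_2)$. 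Moreover, the embedding $L \hookrightarrow G \ltimes N$, $\ell \mapsto (e,\ell)$, satisfies $\phi(e,\ell) = \ell$, so $\phi$ carries the copy of~$L$ inside $G\ltimes N$ onto the copy of~$L$ inside~$G$.

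Next I would take a unitary representation $\pi$ of~$G$ with almost-invariant vectors and form the pulled-back representation $\pi \circ \phi$ of $G \ltimes N$. The first step is to check that $\pi \circ \phi$ still has almost-invariant vectors: given a compact $Q' \subseteq G \ltimes N$ and $\epsilon > 0$, the image $\phi(Q')$ is compact in~$G$, and any nonzero $(\phi(Q'),\epsilon)$-invariant vector for~$\pi$ is automatically a $(Q',\epsilon)$-invariant vector for $\pi \circ \phi$, because $\|(\pi\circ\phi)(g,n)\xi - \xi\| = \|\pi(gn)\xi - \xi\| \le \epsilon\|\xi\|$ for every $(g,n) \in Q'$.

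Finally, since $(G \ltimes N, L)$ has relative Property~(T), the representation $\pi \circ \phi$ admits a nonzero $L$-invariant vector~$\xi$; that is, $(\pi\circ\phi)(e,\ell)\xi = \pi(\ell)\xi = \xi$ for all $\ell \in L$. Hence $\xi$ is a nonzero $L$-invariant vector for~$\pi$ itself, and $(G,L)$ has \relT.

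There is essentially no obstacle here beyond getting the semidirect product conventions right; the argument is a straightforward pull-back, and no use of \Cref{TripleHasT}, fibered tensor products, or any of the other machinery of the paper is required.
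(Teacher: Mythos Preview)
Your argument is correct and is exactly the paper's proof, just written out in more detail: the paper defines $\pi'(g,n)=\pi(g)\pi(n)$, which is precisely your $\pi\circ\phi$, and then observes that almost-invariant vectors for~$\pi$ are almost-invariant for~$\pi'$. Your explicit verification that $\phi(g,n)=gn$ is a homomorphism and that $L$-invariance transfers back is a welcome expansion of what the paper leaves implicit.
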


\begin{proof}
For any unitary representation $\pi$ of~$G$, there is a unitary representation $\pi'$ of~$G \ltimes N$ that is defined by $\pi'(g, n) = \pi(g) \, \pi(n)$, for $g \in G$ and $n \in N$. If $\pi$ has almost-invariant vectors, then so does~$\pi'$. 
\end{proof}

\begin{rem}
The converse of \cref{SemiToPair} is not true. For example, let $G$ be a Lie group with Kazhdan's property~(T), such that $Z(G)$ is not compact, and let $N = L = Z(G)$. (In particular, $G$ could be the universal cover of $\mathrm{Sp}(4,\RR)$ \cite[Example~1.7.13(ii), p.~67]{BHV}.) Then $(G,N)$ has \relT\ (in fact, $(G,G)$ has \relT), but $G \ltimes N \cong G \times N$, and $(G \times N, N)$ does not have \relT\ (because $N$ is a noncompact, abelian Lie group, and therefore does not have Kazhdan's property~(T)).
\end{rem}

As the final preparation for the proof of \Cref{N/LisAmen}, we establish one more \lcnamecref{L+N/LisAmen}:

\begin{lem}[cf.\ {\cite[Lem.~2.3(i)]{CornulierValette}}] \label{L+N/LisAmen}
Suppose $N$ and~$L$ are closed, normal subgroups of a locally compact group~$G$, such that $L \subseteq N$. Assume that $L$ is connected, and that $N$ is a $1$-connected, nilpotent Lie group. If\/ $\Int_{N/L}(G)^\bullet$ and $\Int_{L}(G)^\bullet$ are amenable, then $\Int_N(G)^\bullet$ is amenable.
\end{lem}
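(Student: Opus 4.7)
My plan is to realize $\Aut(N)$ as the real algebraic group $\Aut(\Lie{n})\subseteq\GL(\Lie{n})$ via the exponential map, where $\Lie{n}$ is the Lie algebra of $N$. Since $L\triangleleft G$ and $N\leq G$, we have $L\triangleleft N$, so $\Lie{l}:=\mathrm{Lie}(L)$ is an ideal of $\Lie{n}$, and $\Int_N(G)^\bullet$ lies in the stabilizer $\mathcal{S}:=\mathrm{Stab}_{\Aut(\Lie{n})}(\Lie{l})$. The natural morphism of real algebraic groups $\pi\colon\mathcal{S}\to\Aut(\Lie{l})\times\Aut(\Lie{n}/\Lie{l})$, restricted to $\Int_N(G)^\bullet$, has image in $\Int_L(G)^\bullet\times\Int_{N/L}(G)^\bullet$, by continuity of the two factor projections (since $\pi_L\circ\Int_N=\Int_L$ and analogously for $N/L$).

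The first substantive step is to verify that $K:=\ker\pi$ is abelian. Any $\phi\in K$ has the form $\phi=I+D$ with $D\colon\Lie{n}\to\Lie{n}$ satisfying $D(\Lie{n})\subseteq\Lie{l}$ and $D|_{\Lie{l}}=0$; since then $D_1D_2=0$, the composition rule $(I+D_1)(I+D_2)=I+D_1+D_2$ identifies $K$ with a closed additive subgroup of $\mathrm{Hom}(\Lie{n}/\Lie{l},\Lie{l})$, hence an abelian unipotent algebraic group over $\RR$.

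To conclude amenability of $\Int_N(G)^\bullet$, I would pass to the Zariski closures $Z_N$, $Z_L$, $Z_{N/L}$ of $\Int_N(G)$, $\Int_L(G)$, $\Int_{N/L}(G)$ in the respective general linear groups, noting that $\Int_N(G)^\bullet\subseteq Z_N(\RR)$. The standard fact that the Zariski closure of an amenable subgroup of a linear group is amenable (Zimmer) implies that $Z_L$ and $Z_{N/L}$ are amenable, so $\pi(Z_N)\subseteq Z_L\times Z_{N/L}$ is amenable; and $Z_N\cap K$ is amenable as a subgroup of the abelian group $K$. Since $K$ is unipotent over $\RR$, the induced sequence of real points
\[
1\to(Z_N\cap K)(\RR)\to Z_N(\RR)\to\pi(Z_N)(\RR)\to 1
\]
is exact, so $Z_N(\RR)$ is an extension of amenable locally compact groups, thus amenable. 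Therefore its Hausdorff-closed subgroup $\Int_N(G)^\bullet$ is amenable.

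The main obstacle is the familiar topological subtlety that a continuous injective homomorphism of locally compact groups into an amenable group does not force the domain to be amenable (for example $F_2\hookrightarrow\TT^2$): in particular, $\pi(\Int_N(G)^\bullet)$ need not be closed in $\Int_L(G)^\bullet\times\Int_{N/L}(G)^\bullet$, so a naive extension argument applied to the sequence $1\to\Int_N(G)^\bullet\cap K\to\Int_N(G)^\bullet\to\pi(\Int_N(G)^\bullet)\to 1$ stalls at the quotient. The passage to Zariski hulls resolves this because images of morphisms of real algebraic groups are algebraic (hence closed), and unipotent kernels have trivial Galois cohomology over $\RR$.
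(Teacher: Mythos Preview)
Your proof is correct, but the paper takes a more elementary route that avoids Zariski closures and Galois cohomology entirely. The paper observes that the stabilizer $P = \{T \in \GL(\Lie{n}) : T(\Lie{l}) = \Lie{l}\}$ itself splits as a semidirect product $(\GL(\Lie{n}/\Lie{l}) \times \GL(\Lie{l})) \ltimes K$ (by choosing a linear complement $W$ to $\Lie{l}$ in $\Lie{n}$ and identifying $\GL(W)$ with $\GL(\Lie{n}/\Lie{l})$), where $K$ is exactly your abelian kernel. Since $K \triangleleft P$, for any closed subgroups $A \subseteq \GL(\Lie{n}/\Lie{l})$ and $B \subseteq \GL(\Lie{l})$ the set $(A \times B) \ltimes K$ is a closed subgroup of~$P$. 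Taking $A = \Int_{N/L}(G)^\bullet$ and $B = \Int_L(G)^\bullet$ gives an amenable locally compact group (abelian normal subgroup with amenable quotient), and $\Int_N(G)^\bullet$ sits inside it as a closed subgroup by continuity of~$\pi$, hence is amenable. This completely sidesteps the obstacle you correctly identified: rather than analyse an exact sequence for $\Int_N(G)^\bullet$ whose middle-quotient image may fail to be closed, one simply embeds $\Int_N(G)^\bullet$ into a larger, manifestly amenable ambient group built from the two given amenable pieces and the abelian~$K$. Your detour through Zariski hulls and vanishing of $H^1$ for unipotent groups is valid but heavier than needed here.
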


\begin{proof}
Let $\Lie N$ and $\Lie L$ be the Lie algebras of $N$ and~$L$, respectively, and let 
	$$P = \{\, T \in \GL(\Lie N) \mid T(\Lie L) = \Lie L \,\} .$$
Since $N$ is a $1$-connected, nilpotent Lie group, we can identify $\Aut(N)$ with $\Aut(\Lie N)$, which is a closed subgroup of~$P$. It is well known that, by choosing a complement~$W$ to~$\Lie L$ in~$\Lie N$, we have $P = \bigl( \GL(W) \times \GL(\Lie L) \bigr) \ltimes K$, where $K$ is the kernel of the natural map $P \to \GL(\Lie N / \Lie L) \times \GL(\Lie L)$. Therefore, 
	$$\Int_N(G)^\bullet \subseteq \bigl( \Int_{N/L}(G)^\bullet \times \Int_{L}(G)^\bullet \bigr) \ltimes K .$$
Since $K$ is abelian (and hence amenable), we conclude that $\Int_N(G)^\bullet$ is a closed subgroup of an amenable group, and is therefore amenable.
\end{proof}

\begin{proof}[\bf Proof of Corollary \ref{N/LisAmen}]
By modding out~$L^\dagger_\triangleleft$ \csee{LargestRelTNormal}, there is no harm in assuming that it is trivial, which means:
\begin{equation} \tag{$*$} \label{noL}
\begin{matrix}
\text{$N$ does not contain any nontrivial normal subgroup~$L$ of~$G$,} \\
\text{such that $(G,L)$ has \relT.}
\end{matrix}
\end{equation}

Since $N$ is a compactly generated, nilpotent group, it has a unique maximal compact subgroup~$K$ \csee{NilpHasCpct}. Then $(G,K)$ has \relT\ (since~$K$ is compact), and $K \triangleleft G$ (because of the uniqueness), so~\pref{noL} implies that $K$~is trivial.
This means that $N$ has no nontrivial compact subgroups. So $N$ is a $1$-connected Lie group \csee{NoCpctSubgrps} (and, by assumption, $N$~is nilpotent).

All that remains is to show that $\Int_N(G)^\bullet$ is amenable. 
By \cref{SemiToPair}, we may assume that $G$ is a semidirect product $H \ltimes N$.
Let $A = N^{(1)} \cap Z(N)$. (Note that $A$ is a closed, connected, abelian, normal subgroup of~$G$.)
By induction on the rank of~$N$ (and \cref{Abel/LisAmen} for the base case where $A$ is trivial, so $N$~is abelian), we may assume that $N/A$ contains a closed, normal subgroup~$L'/A$ of~$G/A$, such that $(G/A,L'/A)$ has \relT, $N/L'$ is $1$-connected, and $\Int_{N/L'}(G)^\bullet$ is amenable. From \cref{relT(GH1)}, we conclude that $\bigl( G, (L')^{(1)} \bigr)$ has \relT. So \eqref{noL} tells us that $(L')^{(1)}$ is trivial, which means $L'$ is abelian. 
Also, since $N$~is connected and $N/L'$ is $1$-connected, we know that $L'$ is connected.
Therefore, we may apply \cref{Abel/LisAmen} to the semidirect product $G \ltimes L'$ (and compare with \pref{noL}), to conclude that $\Int_{L'}(G)^\bullet$ is amenable. We now know that $\Int_{N/L'}(G)^\bullet$ and $\Int_{L'}(G)^\bullet$ are amenable, so \cref{L+N/LisAmen} tells us that $\Int_N(G)^\bullet$ is amenable, as desired.
\end{proof}

\begin{rem} 
Assume $G$, $N$, and~$L^\dagger$ are as in \cref{N/LisAmen}.
	\begin{enumerate}
	\item
	 If $N$ is connected, then $L^\dagger$ is also connected. To see this from the proof of \cref{N/LisAmen}, it suffices to note that the maximal subgroup~$K$ must be connected, since $N$ is homeomorphic to $K \times \RR^n$. (This is well known for connected Lie groups and, in fact, was proved by Iwasawa \cite[Thm.~13, p.~549]{Iwasawa} for connected groups that are approximated by Lie groups. \Cref{open/cpct=Lie} implies that every connected, locally compact group can be so approximated.)
	\item If $G$ is a semidirect product $H \ltimes N$, then the subgroup~$L^\dagger$ either is compact, or projects nontrivially into $N/N^{(1)}$. To establish this, assume, without loss of generality, that $N$ is a $1$-connected Lie group (by modding out the maximal compact subgroup). If $\Int_N(H)^\bullet$ is amenable, then $L^\dagger$ is trivial.\refnote{AmenQuotNotTNilp}
	 (This is an easy generalization of ($\ref{SemiEquiv-pairAbel} \Rightarrow \ref{SemiEquiv-Amen}$) of \cref{SemiEquiv}.)
	If not, then the Zariski closure of $\Int_N(H)^\bullet$ has a noncompact, semisimple subgroup~$M$. Since $M$ acts nontrivially on the $1$-connected, nilpotent group~$N$, it is well known that $M$ must act nontrivially on~$N/N^{(1)}$.%
	\refnote{NontrivOnAbel} 
So $\Int_{N/N^{(1)}}(G)^\bullet$ is not amenable. Therefore, \cref{N/LisAmen} tells us that $L^\dagger$~is not contained in~$N^{(1)}$.
	\end{enumerate}
\end{rem}

\begin{rem}
The assumption that $N/N^\circ$ is compact cannot be deleted from the statement of \cref{N/LisAmen}.
(So a connectivity assumption is also necessary in \cref{Abel/LisAmen}.)
Here is a counterexample that is adapted from the proof of \cite[Prop.~2.2 (ii$'$~$\Rightarrow$~i)]{CornulierValette}.

Let 
	$\alpha = \sqrt{2}$, 
	$\ints = \ZZ[\alpha]$,  
	$Q(x_1,x_2,x_3) = x_1^2 + x_2^2 + \alpha x_3^2$,
	$\Gamma = \SO_3(Q; \ints)$,
	$N = \ints^3 \cong \ZZ^6$,
	$G = \Gamma \ltimes N$,
	and 
	$H = \SO(3) \ltimes \RR^3$,
and let $\pi$ be the unitary representation of~$G$ obtained by composing the homomorphism
	\begin{align*}
	G
	&= \Gamma \ltimes N
	\hookrightarrow \SO(3) \ltimes \RR^3
	= H
	\end{align*}
with the left-regular representation of~$H$. Since $H$ is amenable, $\pi$ has almost-invariant vectors. However, if $L$ is any nontrivial subgroup of~$N$, then the image of~$L$ in~$\RR^3$ is noncompact, so $\pi$ has no nonzero $L$-invariant vector. Therefore, $L$~must be trivial if $(G,L)$ has \relT.

However, Restriction of Scalars embeds~$G$ as a lattice in $\bigl( \SO(3) \ltimes \RR^3 \bigr) \times \bigl( \SO(2,1) \ltimes \RR^3 \bigr)$ (cf.\ \cite[\S5.5]{Morris}), so it is clear that $\Int_N(G)^\bullet$ is not amenable.
\end{rem}

\Cref{N/LisAmen} implies the following generalization of $(\ref{SemiEquiv-pair} \Leftrightarrow \ref{SemiEquiv-Amen})$ of \cref{SemiEquiv}:

\begin{cor} \label{IffAmenAlmConn}
Assume a topological group~$H$ acts on a locally compact, nilpotent group~$N$, such that $N/N^\circ$ is compact. Then $(H \ltimes N,N)$ has \relT\ if and only if there does \underline{not} exist a closed, $H$-invariant, normal subgroup~$L$ of~$N$, such that $N/L$~is a nontrivial, $1$-connected Lie group, and $\Int_{N/L}(H)^\bullet$ is amenable.
\end{cor}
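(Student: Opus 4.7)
I would set $G = H \ltimes N$ and note that a subgroup of $N$ is normal in $G$ precisely when it is both $H$-invariant and normal in $N$; thus the subgroups $L$ figuring in the statement are exactly the closed $G$-normal subgroups of $G$ that lie in $N$.

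For the direction ($\Leftarrow$), the strategy is to apply \cref{N/LisAmen} directly. It produces the largest closed $G$-normal subgroup $L^\dagger$ of $N$ for which $(G,L^\dagger)$ has \relT, with $N/L^\dagger$ a $1$-connected Lie group and $\Int_{N/L^\dagger}(G)^\bullet$ amenable. Since $\Int_{N/L^\dagger}(H)^\bullet$ is a closed subgroup of this amenable group, it too is amenable. The hypothesis that no nontrivial $L$ with the listed bad properties exists then forces $N/L^\dagger$ to be trivial, i.e.\ $L^\dagger = N$, which is exactly \relT\ for the pair $(G,N)$.

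For the direction ($\Rightarrow$), assume $(G,N)$ has \relT\ and suppose, toward a contradiction, that $L$ is a closed, $H$-invariant, normal subgroup of $N$ such that $M := N/L$ is a nontrivial, $1$-connected, nilpotent Lie group and $\Int_M(H)^\bullet$ is amenable. Since $L \triangleleft G$, we may form $G/L = H \ltimes M$, and a routine pullback of representations along $G \twoheadrightarrow G/L$ shows that $(G/L, M)$ also has \relT. I would then apply the implication $(\ref{SemiEquiv-pair} \Rightarrow \ref{SemiEquiv-Amen})$ of \cref{SemiEquiv} to the $H$-action on $M$, testing it against the subgroup $M^{(1)}$; this is a proper subgroup of $M$ because the nonzero nilpotent group $M$ does not coincide with its commutator, and it is closed, connected, and $H$-invariant (being characteristic). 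That implication yields that $\Int_{M^{ab}}(H)^\bullet$ is \emph{not} amenable.

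To derive the contradiction, I would observe that the quotient $M \twoheadrightarrow M^{ab}$ induces a continuous homomorphism $\Aut(M) \to \Aut(M^{ab})$, under which the image of $\Int_M(H)^\bullet$ is a continuous homomorphic image of an amenable locally compact group and hence amenable; its closure in $\Aut(M^{ab})$ is again amenable and equals $\Int_{M^{ab}}(H)^\bullet$. This contradicts the conclusion of the previous paragraph. The only delicate point I anticipate is the amenability bookkeeping across these successive closures and continuous images; the rest is a direct combination of \cref{N/LisAmen} and the equivalence $(\ref{SemiEquiv-pair} \Leftrightarrow \ref{SemiEquiv-Amen})$ from \cref{SemiEquiv}.
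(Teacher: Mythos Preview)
Your proof is correct and matches the paper's intended argument, which simply says the corollary follows from \cref{N/LisAmen}. For the direction ($\Rightarrow$) you take a slightly longer route than necessary: rather than invoking \cref{SemiEquiv} and then doing the amenability bookkeeping through $\Aut(M)\to\Aut(M^{ab})$, one can argue directly that if $\Int_{N/L}(H)^\bullet$ is amenable then so is $\Int_{N/L}(H)^\bullet \ltimes (N/L)$, and the pullback of its regular representation to $H\ltimes N$ has almost-invariant vectors but no $N$-invariant vectors (since $N/L$ is noncompact); this is the argument the paper records elsewhere (cf.\ the proof that $(\ref{SemiEquiv-pairAbel} \Rightarrow \ref{SemiEquiv-Amen})$ is ``well known'').
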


\begin{rem} \label{IffAmenCpctlyGen}
\Cref{IffAmenAlmConn} can be used to determine whether $(H \ltimes N,N)$ has \relT, even if we replace the assumption that $N/N^\circ$ is compact with the weaker assumption that $N$~is compactly generated. 
This is because the argument in the first paragraph 
of the proof of \cref{SemiEquivNotLie} constructs a $1$-connected, nilpotent Lie group~$N_1$, such that $(H \ltimes N,N)$ has \relT\ if and only if $(H \ltimes N_1,N_1)$ has \relT.
\end{rem}

\section{Relative Property (T) for subsets} \label{SubsetSect}

As was mentioned in \cref{GenToSubsets}, Y.~Cornulier \cite[p.~302]{Cornulier} has generalized the notion of \relT\ to pairs $(G,H)$ in which $H$ is a sub\emph{set} of~$G$, rather than a sub\emph{group}. We propose the following natural analogue for triples $(G,H,M)$ in which $M$ is a subset:

\begin{defn} \label{RelTTripleSubsetDefn}
Assume $H$ is a closed subgroup of a topological group~$G$, and $M$~is a subset of~$G$. 
	\begin{enumerate}
	\item \label{RelTTripleSubsetDefn-alsohas}
	To say that the triple $(G,H,M)$ has \emph{\relT} means that for every $\epsilon > 0$, there exist a compact subset~$Q$ of~$H$ and $\delta > 0$, such that every unitary representation of~$G$ with nonzero $(Q,\delta)$-invariant vectors also has nonzero $(M,\epsilon)$-invariant vectors.
	\item \label{RelTTripleSubsetDefn-approx}
	To say that the triple $(G,H,M)$ has \emph{\relT\ with approximation} means that for every $\epsilon > 0$, there exist a compact subset~$Q$ of~$H$ and $\delta > 0$, such that every $(Q,\delta)$-invariant vector for any unitary representation of~$G$ is also $(M,\epsilon)$-invariant.
	\end{enumerate}
\end{defn}

\begin{rem}
If $H = G$, and $G$~is locally compact, then it follows from the proof of \cite[Thm.~2.2.3 ($1 \Leftrightarrow 2$)]{Cornulier} that \pref{RelTTripleSubsetDefn-alsohas} and~\pref{RelTTripleSubsetDefn-approx} of \cref{RelTTripleSubsetDefn} are equivalent\refnote{RelTSubset}
to each other, and to Cornulier's definition of \relT\ for the pair $(G,M)$. Also, it is easy to see that \cref{RelTTripleSubsetDefn} is consistent with \cref{RelTTriple,RelTApproxTriple} (because being $(M,\epsilon)$-invariant is equivalent to being close to an $M$-invariant vector in the situation where $M$ is a closed subgroup of~$G$ \cite[Lem.~2.2]{Jolissaint}).
\end{rem}

\begin{notation}
 For elements $m_1$ and~$m_2$ of any group, we let 
	$$[m_1,m_2] = m_1^{-1} m_2^{-1} m_1 m_2 .$$
\end{notation}

In our discussion of \relT\ for subsets, the following trivial observation replaces \cref{MMinv} as a way to obtain almost-invariant vectors for~$\pi$ from almost-invariant vectors for $\pi \otimes_{\dual A} \conj\pi$.

\begin{lem} \label{MAlmInvSubset}
Assume
	$\rho$ is a unitary representation of a topological group~$G$ on a Hilbert space~$\hilbert$, 
	$m_1,m_2 \in G$, 
	$\xi \in \hilbert$,
	$\xi' = \xi \otimes \conj{\xi} \in \hilbert \otimes \conj{\hilbert}$,
	and 
	$\rho' = \rho \otimes \conj{\rho}$.
Then
	$$ \| \rho \bigl( [m_1,m_2] \bigr) \xi - \xi \| 
	\le 2 \bigl( \| \rho'(m_1)\xi' - \xi' \| + \| \rho'(m_2)\xi' - \xi' \| \bigr) .$$
\end{lem}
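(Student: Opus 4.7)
The plan is to decompose each $\rho(m_i)\xi$ as a scalar multiple of $\xi$ plus a component orthogonal to $\xi$, observe that the length of the orthogonal part is controlled by $\|\rho'(m_i)\xi'-\xi'\|$, and then expand the commutator directly. The conceptual point is that $\|\rho'(m_i)\xi'-\xi'\|$ small forces $\rho(m_i)\xi$ to lie near \emph{some} scalar multiple of $\xi$, and on such a scalar multiple the commutator $[m_1,m_2]$ acts trivially; the inequality is then just bookkeeping of the error terms.

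To set this up (assuming, as is no loss for the regime of interest, that $\xi$ is a unit vector), I would write $U_i=\rho(m_i)$, $c_i=\langle U_i\xi,\xi\rangle$, and $r_i=U_i\xi-c_i\xi$, so that $r_i\perp\xi$ and $|c_i|\le 1$ by Cauchy--Schwarz. Using the identity $\langle\conj v,\conj w\rangle_{\conj\hilbert}=\overline{\langle v,w\rangle_\hilbert}$, a short expansion would then give
$$\|\rho'(m_i)\xi'-\xi'\|^2 \;=\; \|U_i\xi\otimes\conj{U_i\xi}-\xi\otimes\conj\xi\|^2 \;=\; 2-2|c_i|^2 \;=\; 2\|r_i\|^2,$$
so $\|r_i\|=\|\rho'(m_i)\xi'-\xi'\|/\sqrt{2}$.

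For the final step, I would expand the commutator. Since $\rho(m_1^{-1}m_2^{-1})$ is unitary, $\|\rho([m_1,m_2])\xi-\xi\|=\|(U_1U_2-U_2U_1)\xi\|$. Substituting $U_i\xi=c_i\xi+r_i$, the $c_1c_2\xi$ contributions cancel (because complex scalars commute), leaving
$$(U_1U_2-U_2U_1)\xi \;=\; (c_2 I-U_2)r_1 \;+\; (U_1-c_1 I)r_2.$$
Since $|c_i|\le 1$ and $\|U_i\|=1$, each operator in parentheses has norm at most $2$, so the right-hand side is bounded by $2(\|r_1\|+\|r_2\|)=\sqrt{2}\,(\|\rho'(m_1)\xi'-\xi'\|+\|\rho'(m_2)\xi'-\xi'\|)$, which is in fact stronger than the claimed bound. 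No step poses a real obstacle: the only thing one has to verify with care is the identity $\|\rho'(m_i)\xi'-\xi'\|^2=2\|r_i\|^2$, which is a one-line computation in $\hilbert\otimes\conj\hilbert$.
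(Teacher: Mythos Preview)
Your proof is correct and follows the same strategy as the paper's: decompose $\rho(m_i)\xi$ into a scalar multiple of~$\xi$ plus an orthogonal remainder, bound the remainder by $\|\rho'(m_i)\xi'-\xi'\|$, and exploit that the scalar parts commute when forming $[m_1,m_2]$. Your execution is slightly tighter --- you obtain the exact identity $\|\rho'(m_i)\xi'-\xi'\|^2=2\|r_i\|^2$ where the paper only records the inequality $\ge\|r_i\|^2$, and you expand $(U_1U_2-U_2U_1)\xi$ directly rather than routing through the intermediate point $\lambda_1\lambda_2\xi$ --- which is why you end up with the constant~$\sqrt{2}$ instead of~$2$.
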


\begin{proof}
For convenience, let $\epsilon_i = \| \rho'(m_i) \xi' - \xi' \|$ (for $i = 1,2$), and assume, without loss of generality, that $\|\xi\| = 1$. For $i = 1,2$, there exists a unique $\lambda_i \in \CC$ (with $|\lambda_i| \le 1$), such that 
 	$$\rho(m_i)\xi - \lambda_i \xi \ \perp \ \xi .$$
To avoid some uncomfortably long expressions in the following sentence, let $v_i = \rho(m_i)\xi - \lambda_i \xi$, so $v_i \perp \xi$. Then the three vectors
	$$ \text{$v_i \otimes \conj{\rho(m_i)\xi}$,
	\quad
	$\lambda_i \xi \otimes \conj{v_i}$,
	\quad and \quad
	$\xi'$}
	$$
are pairwise orthogonal, so the Pythagorean Theorem tells us
	\begin{align*}
	\bigl\| \rho'(m_i)\xi' - \xi' \bigr\|^2 
	&= \bigl\| v_i \otimes \conj{\rho(m_i)\xi}\bigr\|^2
	+ \bigl\| \lambda_i \xi \otimes \conj{v_i} \bigr\|^2
	+ \bigl\| \bigl( |\lambda_i|^2 - 1 \bigr) \xi' \bigr\|^2
	\\&\ge \bigl\| v_i \otimes \conj{\rho(m_i)\xi}\bigr\|^2
	\\&= \| v_i \|^2
	, \end{align*}
which means 
	$$ \epsilon_i \ge \| \rho(m_i)\xi - \lambda_i \xi \| .$$
Therefore
	\begin{align*}
	 \| \rho( m_1 m_2) \xi - \lambda_1 \lambda_2 \xi \|
	 &\le \bigl\| \rho(m_1) \bigl( \rho(m_2) \xi - \lambda_2 \xi \bigr) \bigr\| + \| \rho( m_1) (\lambda_2 \xi) - \lambda_1 \lambda_2 \xi \|
	 \\&= \| \rho(m_2) \xi - \lambda_2 \xi \| + |\lambda_2| \cdot \| \rho( m_1) \xi - \lambda_1 \xi \|
	 \\&\le \| \rho(m_2) \xi - \lambda_2 \xi \| + \| \rho( m_1) \xi - \lambda_1 \xi \|
	 \\&\le \epsilon_2 + \epsilon_1
	 . \end{align*}
Since the same is true after interchanging the subscripts $1$ and~$2$ (and $\lambda_1 \lambda_2 = \lambda_2 \lambda_1$), we conclude that
	\begin{align*}
	\bigl\| \rho \bigl( [m_1, m_2] \bigr) \xi -  \xi \bigr\| 
	&= \| \rho( m_1 m_2) \xi -  \rho( m_2 m_1) \xi \| 
	\\&\le \| \rho( m_1 m_2) \xi - \lambda_1 \lambda_2 \xi \| + \| \rho( m_2 m_1) \xi - \lambda_1 \lambda_2 \xi \|
	\\&\le (\epsilon_2 + \epsilon_1) + (\epsilon_1 + \epsilon_2)
	\\&= 2(\epsilon_1 + \epsilon_2)
	.  \qedhere \end{align*}
\end{proof}

The following \lcnamecref{TripleHasTSubset} is an analogue of \cref{TripleHasT} that does not require the set~$M$ to be a subgroup.

\begin{defn} \ 
	\begin{enumerate}
	\item A \emph{probability measure} is a finite measure that has been normalized to have total mass~$1$.
	\item We use the total variation norm $\| \cdot \|$ to provide a metric on the space of probability measures (on any topological space). 
	\item For a subset~$M$ of a group~$G$, we let 
	$$\comm{M}{M} = \{\, [m_1,m_2] \mid m_1,m_2 \in M \,\} .$$
	\end{enumerate}
\end{defn}

\begin{thm} \label{TripleHasTSubset}
Let $H$ be a closed subgroup of a locally compact group~$G$,  let $A$ be a closed, abelian, normal subgroup of~$G$, and let $M$~be a subset of~$G$. For every $\epsilon' > 0$, assume there exists $\delta' > 0$, such that if $\mu$ is any $(M,\delta')$-invariant probability measure on~$\dual A$ that is quasi-invariant for the action of~$G$, then $\mu(\dual A^M) \ge 1 - \epsilon'$, where $\dual A^M$ is the set of fixed points of~$M$. If $HA$~is closed and\/ $(G/A, HA/A, MA/A)$ has \relT\ with approximation, then\/ $\bigl( G, H, \comm{M}{M} \bigr)$ has \relT\ with approximation. 
\end{thm}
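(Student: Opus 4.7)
The plan is to adapt the proof of \cref{TripleHasT}, replacing the use of \cref{MMinv} (which produced an $M$-invariant vector) by the pointwise \cref{MAlmInvSubset}, and using the quantitative measure-theoretic hypothesis in place of the ``supported on fixed points'' condition.

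Given $\epsilon > 0$, I would first pick $\epsilon_1, \epsilon' > 0$ with $16\epsilon_1^2 + 4\epsilon' < \epsilon^2$, then use the hypothesis to produce $\delta_M > 0$ so that every $(M,\delta_M)$-invariant, $G$-quasi-invariant probability measure $\nu$ on~$\dual A$ satisfies $\nu(\dual A^M) \ge 1 - \epsilon'$, shrinking $\epsilon_1$ if necessary so that $2\epsilon_1 \le \delta_M$. Next, invoke \relT{} with approximation of $(G/A, HA/A, MA/A)$ to obtain a compact $Q \subseteq H$ and $\delta_0 > 0$ such that every $(Q,\delta_0)$-invariant vector for any unitary representation of $G/A$ is $(MA/A, \epsilon_1)$-invariant, and set $\delta := \delta_0/3$. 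Given a $(Q,\delta)$-invariant unit vector $\xi$ for a unitary representation $\pi$ of~$G$, realize $\pi$ on $L^2(\dual A, \mu; \hilbert)$ as in \cref{FiberSect} (replacing $\pi$ by $\pi \oplus \pi \oplus \cdots$ to make $\pi|_A$ homogeneous, and arranging by a standard reduction that $\xi$ has full $\mu$-support, so that $\nu := \|\xi\|^2 \mu$ is a $G$-quasi-invariant probability measure). Form $\pi' := \pi \otimes_{\dual A} \conj{\pi}$ and $\xi'(\lambda) := \|\xi(\lambda)\|^{-1} \xi(\lambda) \otimes \conj{\xi(\lambda)}$. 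By \cref{TensorAlmInv}, $\xi'$ is a $(Q, \delta_0)$-invariant unit vector for $\pi'$; by \cref{Akerpip} it may be viewed as a vector for a representation of $G/A$, so $\xi'$ is $(M, \epsilon_1)$-invariant. The pointwise identity $\|\pi'(m)\xi'(\lambda)\| = \|\pi(m)\xi(\lambda)\|$ together with Cauchy--Schwarz yields $\|m_*\nu - \nu\| \le 2\|\pi'(m)\xi' - \xi'\|_2 \le 2\epsilon_1$ for $m \in M$, so the hypothesis gives $\nu(\dual A \setminus \dual A^M) \le \epsilon'$.

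For $c = [m_1, m_2] \in \comm{M}{M}$, I would split $\|\pi(c)\xi - \xi\|_2^2$ across $\dual A^M$ and its complement. Since $c$ fixes $\dual A^M$ pointwise, $c^{-1}(\dual A \setminus \dual A^M) \subseteq \dual A \setminus \dual A^M$, and both $\|\xi\|_{L^2(\dual A \setminus \dual A^M)}^2$ and $\|\pi(c)\xi\|_{L^2(\dual A \setminus \dual A^M)}^2$ are bounded by $\nu(\dual A \setminus \dual A^M) \le \epsilon'$, giving a complement contribution of at most $4\epsilon'$. On $\dual A^M$, the cocycle identity forces $D(c,\lambda) = 1$, and $\alpha(\,\cdot\,,\lambda)$ restricts to an honest homomorphism $\rho_\lambda$ on the subgroup of the stabilizer generated by $M$; applying \cref{MAlmInvSubset} to the unit vector $\xi(\lambda)/\|\xi(\lambda)\|$ and then scaling by $\|\xi(\lambda)\|$ produces
\[
\|(\pi(c)\xi - \xi)(\lambda)\| \le 2\|\xi(\lambda)\|\bigl(b_{m_1}(\lambda) + b_{m_2}(\lambda)\bigr),
\]
where $b_{m_i}$ is defined so that $\|(\pi'(m_i)\xi' - \xi')(\lambda)\| = \|\xi(\lambda)\|\, b_{m_i}(\lambda)$ on $\dual A^M$. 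Squaring, integrating, and using the identity
\[
\int_{\dual A^M} \|\xi\|^2 b_{m_i}^2 \, d\mu = \int_{\dual A^M} \|\pi'(m_i)\xi' - \xi'\|^2 \, d\mu \le \epsilon_1^2
\]
bounds the $\dual A^M$-contribution by $16\epsilon_1^2$, yielding $\|\pi(c)\xi - \xi\|_2^2 \le 16\epsilon_1^2 + 4\epsilon' < \epsilon^2$, as required. The main obstacle is precisely this last calculation: carefully tracking the two normalizations (the unit-vector normalization in $\xi'$ versus the factor $\|\xi(\lambda)\|$ coming out of \cref{MAlmInvSubset}) so that $L^2(\mu)$-control of $\xi'$ transfers into $L^2(\nu)$-control of $b_{m_i}$. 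A secondary technical point is arranging $G$-quasi-invariance of $\nu$ before invoking the measure-theoretic hypothesis, which should follow from a standard reduction to a cyclic subspace of~$\pi$.
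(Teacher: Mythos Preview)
Your proposal is correct and follows essentially the same route as the paper's proof: realize $\pi$ on $L^2(\dual A,\mu;\hilbert)$, pass to $\pi'=\pi\otimes_{\dual A}\conj\pi$ via \cref{TensorAlmInv}, use \relT\ with approximation for $(G/A,HA/A,MA/A)$ to make $\xi'$ almost $M$-invariant, deduce that the measure $\|\xi'\|^2\mu=\|\xi\|^2\mu$ is almost $M$-invariant, apply the measure hypothesis to concentrate mass on $\dual A^M$, and then use \cref{MAlmInvSubset} fiberwise over $\dual A^M$. Your explicit splitting into $\dual A^M$ and its complement, with the $4\epsilon'$ bound coming from $c$ preserving $\dual A\setminus\dual A^M$, is a cleaner version of what the paper does informally by ``assuming $f$ is supported on $\dual A^M$''; and your tracking of the two normalizations (so that the $\|\xi(\lambda)\|$ from scaling \cref{MAlmInvSubset} cancels against the $\|\xi(\lambda)\|^{-1}$ in the definition of $\xi'$) is exactly the point the paper is using implicitly when it writes the pointwise inequality with $f'$ rather than $f\otimes\conj f$.
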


\begin{proof}
This is adapted from the proof of \cref{TripleHasT}.
Given an arbitrary $\epsilon > 0$, 
choose $\epsilon' > 0$ small enough that if $\xi$ is any $\epsilon'$-invariant unit vector, and $\| \xi - \eta\|^2 < \epsilon'$, then $\eta$~is $\epsilon/4$-invariant.
Also, let $\delta'$ be a value that corresponds to this value of~$\epsilon'$ in the assumption in the statement of the \lcnamecref{TripleHasTSubset} (and assume $\delta' < \epsilon'/2$). 
Since $(G/A, HA/A, M/A)$ has \relT\ with approximation (and $HA$ is closed), there exist a compact subset~$Q$ of~$H$ and $\delta > 0$, such that if $\xi$ is any $(Q,\delta)$-invariant vector for any unitary representation of $G/A$, then $\xi$~is $(M,\delta'/2)$-invariant.

Now, suppose $\pi$ is a unitary representation of~$G$, such that $\pi$ has a nonzero $(Q,\delta/3)$-invariant vector~$f$. (We wish to show that $f$ is $\bigl( [M,M] ,\epsilon \bigr)$-invariant.) 
By replacing $\pi$ with the direct sum $\pi \oplus \pi \oplus \cdots$ of infinitely many copies of itself, we may assume that all irreducible representations appearing in the direct integral decomposition of~$\pi|_A$ have the same multiplicity (namely,~$\infty$). By definition, this means that $\pi|_A$ is homogeneous, so \cref{FiberSect} provides a quasi-invariant probability measure~$\mu$ on~$\dual A$, a Borel cocycle $\alpha \colon G \times \dual A \to \unitary(\hilbert)$, a corresponding realization of~$\pi$ as a representation on the Hilbert space $L^2(\dual A, \mu;\hilbert)$, and a unitary representation $\pi' = \pi \otimes_{\dual A} \conj{\pi}$ of~$G$. 

Since $\pi$ has been realized as a representation on $L^2( \dual A, \mu ; \hilbert)$, we know that $f \in L^2( \dual A, \mu ; \hilbert)$. Then \cref{TensorAlmInv} provides a $(Q,\delta)$-invariant unit vector~$f'$ for~$\pi'$. By the choice of $Q$ and~$\delta$ (and Remark \ref{Akerpip}), we know that 
	$$ \text{$f'$~is $(M,\delta'/2)$-invariant} .$$
Then it is straightforward to check that $\|f'\|^2 \, \mu$ is an $(M,\delta')$-invariant\refnote{InvtMeasFromInvtVector} 
measure on~$\dual A$.
Also, by perturbing $f$ slightly, we could assume that it is nonzero almost everywhere, so $f'$ is also nonzero almost everywhere. Then $\|f'\|^2 \, \mu$ is quasi-invariant for the $G$-action (because $\mu$ is quasi-invariant). By the choice of~$\delta'$, this implies that $\int_{\dual A^M} \|f'\|^2 \, d\mu > 1 - \epsilon'$ (assuming that $f'$ has been normalized to be a unit vector in~$L^2$).
\Cref{TensorAlmInv} tells us 
	$$f'(\lambda) = \frac{1}{\|f(\lambda)\|} \, f(\lambda) \otimes \conj{f(\lambda)} ,$$
so $\| f'(\lambda) \| = \| f(\lambda) \|$ for all $\lambda \in \dual A$. This implies
	$$\int_{\dual A^M} \|f\|^2 \, d\mu = \int_{\dual A^M} \|f'\|^2 \, d\mu > 1 - \epsilon' ,$$
so $\bigl\| f|_{\dual A^M} - f \bigr\| < \epsilon'$. This means that $f$ is well approximated by a function that is supported on~$\dual A^M$, so, to simplify the argument, we will assume that $f$ itself is supported on~$\dual A^M$.

For each fixed $\lambda\in\dual A^M$, the function $\alpha(m,\lambda)$ is a representation $\rho_\lambda$ of $M$ on~$\hilbert$, so $M$ acts on $L^2 ( \dual A^M, \mu ; \hilbert \otimes \conj{\hilbert} )$ by
$$ \bigl( \pi'(m)f' \bigr)(\lambda)= (\rho_\lambda \otimes \conj{\rho_\lambda})(m) f'(\lambda).$$
Therefore, we see from \cref{MAlmInvSubset} that
	$$ \bigl\| \pi\bigl([m_1,m_2]\bigr)f(\lambda) - f(\lambda) \bigr\|
	\le 2 \bigl( \| \pi'(m_1)f'(\lambda) - f'(\lambda) \|
		+ \| \pi'(m_2)f' (\lambda) - f'(\lambda) \| \bigr).$$
Since this is true for all~$\lambda$, we conclude that
	\begin{align*}
	\bigl\| \pi\bigl([m_1,m_2]\bigr)f  - f \bigr\|_2
	&\le 2 \bigl( \| \pi'(m_1)f' - f' \|_2
		+ \| \pi'(m_2)f' - f' \|_2 \bigr) 
	\\&\le 2\left( \frac{\delta'}{2} + \frac{\delta'}{2} \right)
	< \epsilon
	. \qedhere \end{align*}
\end{proof}

\Cref{TripleHasTSubset} immediately implies the following natural generalization of \cref{relT(GH1)}, in which $H$ is a subset, rather than a subgroup. (We do not need to mention approximation, because Jolissaint's proof of \cref{NearInvariantForPair} shows that \relT\ is equivalent to \relT\ with approximation\refnote{ApproxForPair}
for all pairs $(G,H)$, even if $H$ is only a subset, not a subgroup.)
 
\begin{cor} \label{relT(GH1)Subset}
Let $A$ be a closed, abelian, normal subgroup of a locally compact group~$G$, and $H$ be a subset of~$G$ that centralizes~$A$. If $(G/A, HA/A)$ has \relT, then $\bigl(G, \comm{H}{H} \bigr)$ has \relT. 
\end{cor}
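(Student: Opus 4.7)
The plan is to derive this corollary by applying \cref{TripleHasTSubset} in the special case where the closed subgroup playing the role of ``$H$'' in that theorem is taken to be all of~$G$, while the subset playing the role of ``$M$'' is the given subset~$H$. With these choices, the triple conclusion of \cref{TripleHasTSubset} becomes ``\relT\ with approximation'' for $(G, G, \comm{H}{H})$, which immediately implies that the pair $(G, \comm{H}{H})$ has \relT\ in the subset sense of \cref{RelTTripleSubsetDefn}.

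To carry this out, I first need to verify the three hypotheses of \cref{TripleHasTSubset} with these assignments. The condition ``$HA$ is closed'' becomes ``$GA = G$ is closed,'' which is automatic. The hypothesis ``$(G/A, HA/A, MA/A)$ has \relT\ with approximation'' becomes ``$(G/A, G/A, HA/A)$ has \relT\ with approximation,'' which is the same as saying that the pair $(G/A, HA/A)$ has \relT\ with approximation; this follows from the assumption of the corollary together with Jolissaint's theorem (\cref{NearInvariantForPair}), which tells us that \relT\ for pairs automatically upgrades to \relT\ with approximation.

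The main point is verifying the measure-theoretic hypothesis. Because $H$ centralizes~$A$, every element of~$H$ acts trivially on~$A$ by conjugation, and therefore acts trivially on the unitary dual~$\dual A$ as well. This means that the set of fixed points $\dual A^H$ equals all of~$\dual A$, so for every probability measure~$\mu$ on~$\dual A$ we have $\mu(\dual A^H) = 1$, regardless of any $(H,\delta')$-invariance or quasi-invariance condition. Thus for every $\epsilon' > 0$ we may simply take $\delta' = 1$ (or any positive value), and the hypothesis of \cref{TripleHasTSubset} holds trivially.

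Applying \cref{TripleHasTSubset} then yields that the triple $(G, G, \comm{H}{H})$ has \relT\ with approximation. Unpacking \cref{RelTTripleSubsetDefn}, this says precisely that for every $\epsilon > 0$ there exist a compact subset~$Q$ of~$G$ and $\delta > 0$ such that every $(Q,\delta)$-invariant vector for any unitary representation of~$G$ is $(\comm{H}{H},\epsilon)$-invariant. That is the definition of \relT\ for the pair $(G, \comm{H}{H})$ in the subset setting, completing the proof. I do not expect any real obstacle: the entire argument reduces to matching notation and observing that the centralizer hypothesis trivializes the measure condition of \cref{TripleHasTSubset}.
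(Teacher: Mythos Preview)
Your approach is correct and is essentially the same as the paper's: apply \cref{TripleHasTSubset} with $G$ itself in the role of the closed subgroup and the given subset~$H$ in the role of~$M$, observing that the centralizer hypothesis makes $\dual A^H = \dual A$ so the measure condition is vacuous. The only small caveat is that \cref{NearInvariantForPair} as stated assumes $H$ is a closed \emph{subgroup}, whereas here $HA/A$ is only a subset; the paper handles this by remarking (just before the corollary) that Jolissaint's argument extends verbatim to subsets, so \relT\ and \relT\ with approximation remain equivalent for pairs $(G,M)$ with $M$ a subset.
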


\section{Other observations about \relT} \label{OtherSect}

We close the paper with some tangential observations about \relT.

\subsection{Relative Property (T) for connected, normal, Lie subgroups}

If $N$ is a connected Lie group, then, since the group $U/U_S$ in the following \lcnamecref{NormalLie} is a connected, nilpotent Lie group, \cref{IffAmenAlmConn} determines whether or not $(H \ltimes N, N)$ has \relT, without the need to assume $N$ is nilpotent.

\begin{prop} \label{NormalLie}
Suppose a locally compact group~$H$ acts on a connected Lie group~$N$. Let $S$ be the closure of the product of the noncompact simple factors of a Levi subgroup of~$N$, let $U$ be the nilradical of~$N$, and let 
	$$ U_S = \cl{ \vphantom{\bigl(} [S,U] \cdot (S \cap U)} . $$ 
Then $(H \ltimes N, N)$ has \relT\ if and only if:
	\begin{enumerate}
	\item \label{NormalLie-S}
	$S$ has Kazhdan's Property~\textup(T\textup),
	\item \label{NormalLie-R}
	$N/\cl{SU}$ is compact,
	and
	\item \label{NormalLie-U}
	$\bigl( H \ltimes (U/U_S), U/U_S \bigr)$ has \relT. 
	\end{enumerate}
\end{prop}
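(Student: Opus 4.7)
The plan is to prove both directions using the relative-T machinery developed earlier in the paper. For the sufficient direction ($\Leftarrow$), assume conditions \pref{NormalLie-S}--\pref{NormalLie-U}. I climb the normal tower $\{e\}\trianglelefteq U_S\trianglelefteq\overline{SU}\trianglelefteq N$. First, $(H\ltimes N,U_S)$ has \relT: by \pref{NormalLie-S}, every unitary representation of $H\ltimes N$ with almost-invariant vectors has $S$-invariant vectors, and the Mautner phenomenon for the noncompact semisimple $S$ acting by conjugation on $U$ propagates this to $\overline{[S,U]}$-invariance, while $(S\cap U)$-invariance is automatic since $S\cap U\subseteq S$. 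Second, $\overline{SU}/U_S$ decomposes as the internal direct product $\bar S\times(U/U_S)$, where $\bar S:=S/(S\cap U)$ is a quotient of $S$ and hence has (T); combining $(H\ltimes(N/U_S),\bar S)$ (from (T) of $\bar S$) with $(H\ltimes(N/U_S),U/U_S)$ (from \pref{NormalLie-U}, via the inclusion $H\ltimes(U/U_S)\subseteq H\ltimes(N/U_S)$) using \cref{HinFiniteProduct} yields \relT\ for $(H\ltimes(N/U_S),\overline{SU}/U_S)$. Splicing these two stages (using normality of $U_S$ in $H\ltimes N$ and \cref{NearInvariantForNormal}) gives \relT\ for $(H\ltimes N,\overline{SU})$, and then compactness of $N/\overline{SU}$ in \pref{NormalLie-R} promotes this to \relT\ for $(H\ltimes N,N)$ by averaging.

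For the necessary direction ($\Rightarrow$), assume $(H\ltimes N,N)$ has \relT. Condition \pref{NormalLie-S}: $(H\ltimes N,S)$ has \relT\ since $S\subseteq N$, and $\overline{L_cR}$ is characteristic in $N$ (hence normal in $H\ltimes N$) with quotient $N/\overline{L_cR}\cong S/Z$ for a discrete central subgroup $Z$; the quotient pair $(H\ltimes(S/Z),S/Z)$ inherits \relT, and since $\operatorname{Out}(S)$ is finite for connected semisimple $S$, a finite-index reduction yields Property (T) for $S/Z$ and hence for $S$. Condition \pref{NormalLie-R}: pass to $(H\ltimes(N/\overline{SU}),N/\overline{SU})$ with \relT; here $N/\overline{SU}=L_c\ltimes A$ with $A=R/U$ abelian, and since $[S,R]\subseteq U$ the semisimple $S$ acts trivially on $A$, so the effective action factors through $H\ltimes L_c$. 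Structural rigidity of $\Aut(N)$ forces the image in $\Aut(A)=GL(\mathfrak r/\mathfrak u)$ to be amenable, and combining this with \cref{SemiTripleAmen} (taking $L=0$) forces $A=0$. Condition \pref{NormalLie-U}: $(H\ltimes N,U)$ has \relT, hence so does $(H\ltimes(N/U_S),U/U_S)$; \cref{Nilpotent/N1} reduces to the abelianization $(U/U_S)^{ab}$, and the descent to $(H\ltimes(U/U_S),U/U_S)$ exploits the triviality of the $S$-action on $U/U_S$ (by definition of $U_S$) together with an $L_c$-averaging argument on $H$-invariant proper subgroups of $(U/U_S)^{ab}$.

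The main obstacle is the structural amenability claim in the necessity of \pref{NormalLie-R} --- that $\Aut(N)$ acts on $R/U$ with amenable image --- which encodes Mostow-style rigidity of the Levi decomposition: the semisimple part acts trivially on $R/U$, and outer automorphisms are sharply constrained by the bracket data on the nilradical. A related subtlety arises in the descent for \pref{NormalLie-U}, because \relT\ does not pass to subgroup pairs in general, so one must argue that the compactness of $L_c$ and the specific structure of the $S$-action reconcile the ambient non-amenability criterion (on $(H\ltimes L_c)$-invariant subgroups) with the one needed for the smaller pair (on $H$-invariant subgroups).
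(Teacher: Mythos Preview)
Your necessary direction ($\Rightarrow$) is close in spirit to the paper's, with two differences worth noting. For \pref{NormalLie-R}, the structural fact you flag as ``the main obstacle'' is exactly what the paper invokes, but in a sharper form: $\Aut(N)$ acts on $N/(S^+U)$ (essentially $R/U$) through a \emph{finite} group, not merely an amenable one. This comes from the rigidity of tori in linear algebraic groups (the centralizer of a torus has finite index in its normalizer), applied to the image of $R$ in the Zariski closure of $\Ad R$; you should supply or cite this. For \pref{NormalLie-U}, your descent from $H\ltimes(N/U_S)$ to $H\ltimes(U/U_S)$ is more circuitous than necessary. The paper first uses Jolissaint's result that \relT\ passes to closed cocompact subgroups: from $(H\ltimes N,\overline{SU})$ and compactness of $N/\overline{SU}$ one gets $(H\ltimes\overline{SU},\overline{SU})$, and then two quotients (by $U_S$, then by the characteristic factor $\bar S$ of the direct product $\bar S\times(U/U_S)$) give \pref{NormalLie-U} directly, with no $L_c$-averaging needed.

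Your sufficient direction ($\Leftarrow$) is a genuinely different argument from the paper's. The paper proceeds by induction on $\dim N$: assuming no nontrivial connected $H$-invariant normal subgroup has invariant vectors, one uses \cref{LargestRelT} and \cref{N/LisAmen} to force $S$ to centralize $U$, hence $S\triangleleft H\ltimes N$, hence $S$ trivial by \pref{NormalLie-S}; then $U=SU$ is trivial by \pref{NormalLie-U}, and $N$ is compact by \pref{NormalLie-R}. This avoids Mautner entirely. Your direct tower argument works too, but the step ``$S$-invariance $\Rightarrow$ $\overline{[S,U]}$-invariance via Mautner'' is not a one-liner: you need that for a regular element $a$ in a split torus of $S$, the contracting subgroup $U_a^-\subset U$ fixes $\xi$ (standard Mautner), and then that the $S$-conjugates of the various $U_a^\pm$ generate a group whose Lie algebra contains $[\mathfrak{s},\mathfrak{u}]$. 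The latter holds because every nontrivial $\mathfrak{s}$-irreducible in $\mathfrak{u}$ has a nonzero weight vector, whose $S$-orbit spans the irreducible --- but this deserves to be written out. The paper's inductive route trades this computation for the machinery of \cref{N/LisAmen}; yours is more hands-on and arguably more transparent once the Mautner lemma is stated carefully.
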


\begin{proof}

($\Rightarrow$) 
The adjoint group $\Ad S$ is a quotient of~$N$, 
so $( H \ltimes \Ad S , \Ad S )$ has \relT. 
However, $\Ad S$ is a connected, semisimple Lie group, so its outer automorphism group is finite. Therefore, we may assume that $H$ acts on $\Ad S$ by inner automorphisms (after replacing $H$ with a finite-index subgroup). This implies that 
	$$ H \ltimes \Ad S = C_{H \ltimes \Ad S}(S) \cdot \Ad S \cong H \times \Ad S .$$
So we now know that $( H \times \Ad S , \Ad S )$ has \relT. This implies that $\Ad S$ has Kazhdan's Property~\textup(T\textup). Then~\pref{NormalLie-S} follows from \cref{relT(GH1)} (or the special case proved by J.--P.~Serre that is mentioned in \cref{relT(GH1)Serre}).

By definition, $S$ is contained in the closure of some Levi subgroup~$S^+$ of~$N$.  Since the pair $\bigl( H \ltimes N/(S^+U), N/(S^+U) \bigr)$ has \relT, and the structure theory of Lie groups tells us that $\Aut(N)$ acts on $N/(S^+U)$ via a finite group\refnote{actR/Ufinite}, we see that $N/(S^+U)$ is compact. Since $S^+/S$ is compact (by the definition of~$S$), this implies~\pref{NormalLie-R}. 

Since $(H \ltimes N, N)$ has \relT, \pref{NormalLie-R} implies that $\bigl( H \ltimes SU , SU \bigr)$ has \relT\ (see \cite[Cor.~4.1(2)]{Jolissaint}). Passing to a quotient yields~\pref{NormalLie-U}.

\medbreak

($\Leftarrow$) Suppose $\pi$ is a unitary representation of $H \ltimes N$ that has almost-invariant vectors.
We wish to show that $N$ has invariant vectors. 
By induction on $\dim N$ (and \cref{NearInvariantForPair}), we may assume that no nontrivial, connected, $H$-invariant, normal subgroup of~$N$ has nonzero invariant vectors.\refnote{NormalLie-nonormalAid}

Therefore, \cref{LargestRelT} implies that there is no nontrivial, connected subgroup~$U_0$ of~$U$, such that $(H \ltimes N,U_0)$ has \relT.
 So $U$ has no nontrivial, compact subgroups, and is therefore $1$-connected \csee{NoCpctSubgrps}. Then \cref{N/LisAmen} implies that $S$ centralizes~$U$. So $S \triangleleft H \ltimes N$.\refnote{NormalLie-Snormal}
However, we see from~\pref{NormalLie-S} that $(H \ltimes N, S)$ has \relT. So the assumption of the previous paragraph implies that $S$ is trivial. Then $U_S$ is obviously also trivial. So $U/U_S = U = SU$.
Therefore, \pref{NormalLie-U}~tells us that $\bigl( H \ltimes (SU), SU \bigr)$ has \relT. 
Then the assumption of the previous paragraph implies that $SU$ is trivial.
Therefore, \pref{NormalLie-R} tells us that $N$ is compact, so $( H \ltimes N, N)$ has \relT, as desired.
\end{proof}


\begin{rem} 
The proof of \cref{NormalLie} yields the following general result (which is slightly weaker than \cref{NormalLie} in the special case where $G$ is a semidirect product):

\begin{narrower} \it\noindent Suppose $N$ is a closed, normal subgroup of a locally compact group~$G$, such that $N$ is a connected Lie group. Define $S$, $U$, and~$U_S$ as in \cref{NormalLie}.
Then $(G, N)$ has \relT\ if and only if:
	\begin{enumerate} \leftskip = 2\parindent
	\item $S$ has Kazhdan's Property~\textup(T\textup),
	\item $N/\cl{SU}$ is compact,
	and
	\item $( G / U_S, U /U_S )$ has \relT.
	\end{enumerate}
\end{narrower}
Stronger results were already known in the special case where $G$ is a connected Lie group. (See, for example, \cite[Cor.~3.3.2]{Cornulier}.)
\end{rem}

\subsection{Relative Property (T) for solvable subgroups}

In the statement of \cref{Nilpotent/N1}, the assumption that $N$ is nilpotent cannot be replaced with the weaker assumption that $N$ is solvable. (For example, let $G = N$ be a noncompact, solvable group, such that $G/G^{(1)}$ is compact.) However, it would suffice to assume that $N$ is a connected, real split, solvable Lie group (see \fullcref{WhenInvMeas}{split}). Also, we have the following easy consequence of \cref{Nilpotent/N1} that applies to some other solvable groups.

\begin{notation}
If $N$ is a locally compact group, then 
	$$N^{(2)}= (N^{(1)})^{(1)} = \cl{\bigl[ [N,N], [N,N] \bigr]}$$
is the closure of the second derived group of~$N$.
\end{notation}

\begin{cor} \label{Solvable/N2}
Let $N$ be a closed, normal subgroup of a locally compact group~$G$, such that $N^{(1)}$ is nilpotent.
 Then $(G,N)$ has \relT\ if and only if $(G/N^{(2)}, N/N^{(2)})$ has \relT. 
 \end{cor}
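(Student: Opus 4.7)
The plan is to reduce the corollary to \cref{Nilpotent/N1} together with a standard two-step extension argument for relative Property~(T), exploiting the fact that $N^{(1)}$ is nilpotent and that both $N^{(1)}$ and $N^{(2)}$ are characteristic in~$N$, hence normal in~$G$. The direction ($\Rightarrow$) is routine: every unitary representation of $G/N^{(2)}$ pulls back to one of~$G$, so almost-invariant vectors pull back to almost-invariant vectors, and the resulting $N$-invariant vectors descend to $N/N^{(2)}$-invariant vectors. So the substantive content is~($\Leftarrow$).

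For ($\Leftarrow$), the first step is to observe that $(G/N^{(2)}, N^{(1)}/N^{(2)})$ has \relT, since it is a sub-pair of the hypothesized pair $(G/N^{(2)}, N/N^{(2)})$ (any $N/N^{(2)}$-invariant vector is automatically $N^{(1)}/N^{(2)}$-invariant). Now apply \cref{Nilpotent/N1} to the closed, nilpotent, normal subgroup $N^{(1)}$ of~$G$: since $(N^{(1)})^{(1)} = N^{(2)}$ and $(N^{(1)})^{ab} = N^{(1)}/N^{(2)}$, the previous sentence is exactly the hypothesis of that theorem, so we conclude that $(G, N^{(1)})$ has \relT.

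Next I would pass to the quotient by $N^{(1)}$ in the assumption: since $(G/N^{(2)}, N/N^{(2)})$ has \relT, so does its further quotient $(G/N^{(1)}, N/N^{(1)})$. At this stage we have the two ingredients
\[
(G, N^{(1)}) \text{ has \relT} \qquad\text{and}\qquad (G/N^{(1)}, N/N^{(1)}) \text{ has \relT,}
\]
and we combine them by the standard extension argument that already appears in the proof of \cref{TripleNilpHasT}. Namely, given a unitary representation $\pi$ of~$G$ with almost-invariant vectors, the subspace $\hilbert^{N^{(1)}}$ is nonzero and $G$-invariant (because $N^{(1)} \triangleleft G$), and \cref{NearInvariantForNormal} applied to $(G,G,N^{(1)})$ shows that $\pi$ restricted to $\hilbert^{N^{(1)}}$ still has almost-invariant vectors, now as a representation of~$G/N^{(1)}$. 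Relative Property~(T) for $(G/N^{(1)}, N/N^{(1)})$ then produces a nonzero vector in $\hilbert^{N^{(1)}}$ that is fixed by $N/N^{(1)}$, i.e.\ an $N$-invariant vector in~$\hilbert$.

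There is really no serious obstacle here; the only point requiring any care is the extension step, which depends on the approximation upgrade supplied by \cref{NearInvariantForNormal} to ensure that $N^{(1)}$-invariant vectors inherit the almost-invariance from~$\pi$. Everything else is formal manipulation of pairs and quotients once \cref{Nilpotent/N1} has been invoked for the nilpotent subgroup~$N^{(1)}$.
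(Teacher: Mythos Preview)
Your proof is correct and is exactly the natural way to derive the corollary from \cref{Nilpotent/N1}; the paper itself gives no proof, merely calling it an ``easy consequence'' of that theorem, and your argument is the intended one. A minor remark: for the approximation step you could cite \cref{NearInvariantForPair} directly (it already applies to the pair $(G,N^{(1)})$), rather than routing through \cref{NearInvariantForNormal}, but the content is the same.
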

 
 For example, every virtually polycyclic group has a (characteristic) finite-index subgroup whose commutator subgroup is nilpotent (see \cite[Cor.~4.11, p.~59]{Raghunathan}). Here is another example:

\begin{cor} \label{ConnSolv}
Let $N$ be a connected, closed, solvable, normal subgroup of a locally compact group~$G$. Then $(G,N)$ has \relT\ if and only if $(G/N^{(2)}, N/N^{(2)})$ has \relT.
 \end{cor}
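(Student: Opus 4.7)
The forward direction is immediate: if $(G,N)$ has \relT, then so does the quotient pair $(G/N^{(2)}, N/N^{(2)})$. For the converse, my plan is to invoke \cref{Solvable/N2}, which gives exactly this equivalence whenever $N^{(1)}$ is nilpotent. The entire argument thus reduces to establishing the following classical claim: if $N$ is a connected, locally compact, solvable group, then $N^{(1)}$ is nilpotent.

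To prove the claim I would reduce to the Lie-group case and invoke Lie's theorem. First, $N^{(1)}$ is connected, being the closure of the increasing union of the powers of the continuous image $\{[g,h] : g,h \in N\}$ of the connected set $N \times N$ (each power is a connected set containing the identity). By the Montgomery--Zippin structure theorem (\cref{open/cpct=Lie}), $N$ has arbitrarily small compact normal subgroups $K$ with $N/K$ a Lie group; the quotient $N/K$ is then a connected solvable Lie group, and Lie's theorem (if $\Lie g$ is a solvable Lie algebra, then $[\Lie g, \Lie g]$ is nilpotent) tells us $(N/K)^{(1)}$ is nilpotent. The surjection $N \to N/K$ sends $N^{(1)}$ onto $(N/K)^{(1)}$ with kernel $K \cap N^{(1)}$. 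The identity component $K^\circ$ is compact connected solvable, hence abelian; for $K$ small enough one can further arrange $K$ itself to be abelian. Using Pontryagin duality, the automorphism group of a compact abelian group is totally disconnected, so the conjugation homomorphism from the connected group $N$ into $\Aut(K)$ is trivial, making $K$ central in $N$. Hence $K \cap N^{(1)}$ is central in $N^{(1)}$, and $N^{(1)}$ is a central extension of the nilpotent group $(N/K)^{(1)}$ by an abelian group; such an extension is itself nilpotent (of class at most one more than the quotient).

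The main obstacle is the reduction from the locally compact case to the Lie case, and in particular verifying that the compact normal subgroup $K$ from \cref{open/cpct=Lie} can be taken abelian (or at least that $K \cap N^{(1)}$ is central in $N^{(1)}$); a compact normal subgroup of a connected locally compact solvable group need not be abelian in general, only its identity component is. This can be handled by a careful argument using $K^\circ$ and the fact that connected $N$ must act trivially on the totally disconnected quotient $K/K^\circ$, or, more conceptually, via the pro-Lie algebra formalism: $N$ solvable implies $\Lie N$ solvable, whence Lie's theorem in that category gives $[\Lie N, \Lie N]$ nilpotent, which translates back to $N^{(1)}$ being nilpotent. With this fact in hand, \cref{Solvable/N2} delivers the desired equivalence.
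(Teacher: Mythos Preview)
Your approach is correct and slightly more direct than the paper's. The paper does \emph{not} prove that $N^{(1)}$ is nilpotent; instead it uses the weaker (and easier) fact that $N^{(1)}$ has a unique maximal compact subgroup~$C_1$ with $N^{(1)}/C_1$ nilpotent, then mods out~$C_1$ and applies \cref{Solvable/N2} to the pair $(G/C_1, N/C_1)$, relying on the standard fact that modding out a compact normal subgroup does not affect \relT. Your route establishes the stronger structural claim that $N^{(1)}$ itself is nilpotent, which lets you invoke \cref{Solvable/N2} directly without the detour through~$C_1$.

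Your argument for this structural claim is essentially right, but one step is stated a bit too strongly: the small compact normal subgroup~$K$ coming from \cref{open/cpct=Lie} need not be abelian, nor need $K \cap N^{(1)}$ be \emph{central} in~$N^{(1)}$. What your ``careful argument using $K^\circ$'' actually gives is that $K^\circ$ is central in~$N$ (since $K^\circ$ is compact connected solvable, hence abelian, and $\Aut(K^\circ)$ is totally disconnected) and that $[N,K] \subseteq K^\circ$ (since connected orbits in~$K$ lie in cosets of~$K^\circ$). Combining these yields $[N,[N,K]] = \{e\}$, so if $(N/K)^{(1)}$ is nilpotent of class~$c$, then the closed lower central series of $N^{(1)}$ satisfies $\gamma_{c+1}(N^{(1)}) \subseteq K \cap N^{(1)}$, $\gamma_{c+2}(N^{(1)}) \subseteq K^\circ \subseteq Z(N)$, and $\gamma_{c+3}(N^{(1)}) = \{e\}$. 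Thus $N^{(1)}$ is nilpotent of class at most $c+2$ (rather than $c+1$), which is all you need. The pro-Lie alternative you mention also works.
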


\begin{proof}
It is well known that the assumptions on~$N$ imply that $N^{(1)}$ has a unique maximal compact subgroup~$C_1$, and that $N^{(1)}/C_1$ is nilpotent.%
\refnote{N1MaxCpct}
So the desired conclusion is obtained by applying \cref{Solvable/N2} to the pair $(G/C_1,N/C_1)$.
\end{proof}

\subsection{Homomorphisms with a dense image}

The following \lcnamecref{TIffDense} generalizes a result of Y.~Cornulier and R.~Tessera \cite[Cor.~2]{CornulierTessera}.

\begin{cor} \label{TIffDense}
Let $H$, $N$, and~$H_1$ be locally compact groups. Assume $N$~is nilpotent, $H$ acts on~$N$, and we are given a homomorphism $H_1 \to H$ with dense image.  Then $(H \ltimes N,N)$ has \relT\ if and only if $(H_1 \ltimes N,N)$ has \relT.

Moreover, if $(H \ltimes N,N)$ has \relT, then there is a finitely generated group~$\Gamma$ and a homomorphism $\Gamma \to H$, such that $(\Gamma \ltimes N, N)$ has \relT.
\end{cor}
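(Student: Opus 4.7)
For the ``if'' direction of the equivalence---$(H_1 \ltimes N, N)$ has \relT\ implies $(H \ltimes N, N)$ has \relT---pullback suffices, and the density hypothesis is not even required. The continuous homomorphism $\varphi \colon H_1 \to H$ extends to a continuous homomorphism $\tilde\varphi \colon H_1 \ltimes N \to H \ltimes N$ that is the identity on~$N$. A unitary representation $\pi$ of $H \ltimes N$ with almost-invariant vectors yields, upon pullback, a representation $\pi \circ \tilde\varphi$ of $H_1 \ltimes N$ also with almost-invariant vectors (continuous homomorphisms send compact sets to compact sets). The hypothesis supplies nonzero $N$-invariant vectors for $\pi \circ \tilde\varphi$, and these are $N$-invariant for~$\pi$ since $\tilde\varphi|_N$ is the identity.

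For the converse implication and the moreover statement, the plan is to reduce twice via \cref{Nilpotent/N1} to the abelian case and invoke Cornulier--Tessera \cite[Cor.~2]{CornulierTessera}. Since $N \triangleleft H \ltimes N$ is closed and nilpotent, \cref{Nilpotent/N1} gives
\begin{equation*}
(H \ltimes N, N) \text{ has \relT} \ \Longleftrightarrow \ (H \ltimes N^{ab}, N^{ab}) \text{ has \relT} ,
\end{equation*}
and likewise with $H$ replaced by $H_1$ or by any finitely generated discrete group $\Gamma$ equipped with a homomorphism to~$H$ (so that $\Gamma \ltimes N$ is locally compact and second countable with $N$ a nilpotent normal subgroup). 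This reduces the converse implication to the corresponding abelian assertion, which is the content of Cornulier--Tessera: density of $\varphi(H_1)$ in~$H$ forces the invariant-subgroup-plus-amenable-quotient conditions on $N^{ab}$ that characterize \relT\ to agree for $H$ and $H_1$.

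The moreover statement is handled in the same way: the abelian case of \cite[Cor.~2]{CornulierTessera} applied to $N^{ab}$ produces a finitely generated group~$\Gamma$ and a homomorphism $\iota \colon \Gamma \to H$ such that $(\Gamma \ltimes N^{ab}, N^{ab})$ has \relT, and a final application of \cref{Nilpotent/N1} with $G = \Gamma \ltimes N$ lifts this to $(\Gamma \ltimes N, N)$ having \relT. The essential mechanism is therefore a clean two-step reduction via \cref{Nilpotent/N1}: descend to the abelian setting, invoke Cornulier--Tessera, and ascend back; the verifications at each step are routine since $N^{(1)}$ is characteristic in~$N$ and hence normal in every semidirect product considered. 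The only real obstacle is checking that \cite[Cor.~2]{CornulierTessera} is stated in the full generality needed for the locally compact abelian group~$N^{ab}$ (including producing a finitely generated witness), which is precisely what their theorem delivers---the present corollary being explicitly flagged as a generalization of it.
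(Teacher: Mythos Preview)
Your proposal is correct and follows essentially the same approach as the paper: reduce to the abelian case via \cref{Nilpotent/N1} and then invoke \cite[Cor.~2]{CornulierTessera}. Your explicit pullback argument for the ``if'' direction is a routine elaboration the paper leaves implicit, but otherwise the structure is identical.
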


\begin{proof}
In the case where $N$ is abelian, this is \cite[Cor.~2]{CornulierTessera}.
The general case follows from this by applying \cref{Nilpotent/N1}.
\end{proof}

If we assume that $N$ is a connected Lie group, then the assumption that $N$ is nilpotent can be eliminated:

\begin{cor} \label{TIffDenseLie}
Let $H$ and~$H_1$ be locally compact groups, and let $N$ be a connected Lie group. Assume $H$ acts on~$N$, and we are given a homomorphism $H_1 \to H$ with dense image.  Then $(H \ltimes N,N)$ has \relT\ if and only if $(H_1 \ltimes N,N)$ has \relT.

Moreover, if $(H \ltimes N,N)$ has \relT, then there is a finitely generated group~$\Gamma$ and a homomorphism $\Gamma \to H$, such that $(\Gamma \ltimes N, N)$ has \relT.
\end{cor}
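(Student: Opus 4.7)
The plan is to reduce to the nilpotent case, \cref{TIffDense}, using \cref{NormalLie} as the bridge. The key observation is that the three conditions in \cref{NormalLie} characterizing \relT\ for $(H \ltimes N, N)$ split cleanly into two conditions intrinsic to $N$ (Kazhdan's property for~$S$, and compactness of $N/\cl{SU}$) and a third condition involving the \emph{nilpotent} quotient $U/U_S$ with its inherited action of $H$.

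Concretely, I would apply \cref{NormalLie} to each of the pairs $(H \ltimes N, N)$ and $(H_1 \ltimes N, N)$. Since $S$, $U$, and $U_S$ are characteristic subgroups of $N$, conditions \fullref{NormalLie}{NormalLie-S} and \fullref{NormalLie}{NormalLie-R} are intrinsic to $N$ and are identical for the two actions. For the remaining condition~\fullref{NormalLie}{NormalLie-U}, note that $U/U_S$ is a connected, nilpotent Lie group, and that the homomorphism $H_1 \to H$ induces, via composition with the $H$-action on $U/U_S$, an action of $H_1$ that factors through $H$ with dense image on the group of continuous automorphisms of $U/U_S$. Thus \cref{TIffDense} applies and yields that $(H \ltimes (U/U_S), U/U_S)$ has \relT\ if and only if $(H_1 \ltimes (U/U_S), U/U_S)$ has \relT, which establishes the first equivalence.

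For the ``moreover'' assertion, suppose $(H \ltimes N, N)$ has \relT. By \cref{NormalLie}, conditions \fullref{NormalLie}{NormalLie-S}, \fullref{NormalLie}{NormalLie-R}, and \fullref{NormalLie}{NormalLie-U} hold for the $H$-action. Applying the ``moreover'' part of \cref{TIffDense} to the nilpotent Lie group $U/U_S$ and the $H$-action furnishes a finitely generated group~$\Gamma$ and a homomorphism $\Gamma \to H$ such that $(\Gamma \ltimes (U/U_S), U/U_S)$ has \relT. Composing with the $H$-action, $\Gamma$~acts on~$N$; conditions \fullref{NormalLie}{NormalLie-S} and \fullref{NormalLie}{NormalLie-R} persist because they are intrinsic to $N$, and condition \fullref{NormalLie}{NormalLie-U} has just been arranged. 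Since $\Gamma$, with the discrete topology, is locally compact, the reverse direction of \cref{NormalLie} then yields that $(\Gamma \ltimes N, N)$ has \relT.

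The only obstacle, minor as it is, lies in the bookkeeping needed to ensure that the density hypothesis of \cref{TIffDense} is preserved after passing from $N$ to the induced action on~$U/U_S$, and that the characteristic-subgroup extraction in \cref{NormalLie} genuinely commutes with changing the acting group from $H$ to $H_1$ or to~$\Gamma$; both are routine once one remembers that $S$, $U$, $U_S$ are defined functorially from~$N$.
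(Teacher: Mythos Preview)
Your proposal is correct and takes essentially the same approach as the paper, which simply states that \cref{NormalLie} reduces the problem to the nilpotent case handled by \cref{TIffDense}. You have faithfully unpacked this one-line reduction, and your closing caveat about the bookkeeping (in particular, that $S$ depends on a choice of Levi subgroup, so strictly speaking is only canonical up to conjugacy, though the conditions \pref{NormalLie-S} and \pref{NormalLie-R} are conjugation-invariant) is a fair point that the paper itself glosses over.
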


\begin{proof}
\Cref{NormalLie} reduces the problem to the case where $N$ is nilpotent, which is handled by \cref{TIffDense}.
\end{proof}

If we assume that $N$ is a $1$-connected Lie group, then \cref{TIffDense} can be extended to triples, and does not require $H$ or~$H_1$ to be locally compact:

\begin{cor} 
Let $H$ and~$H_1$ be topological groups, and let $N$ be a $1$-connected, nilpotent Lie group. Assume $H$ acts on~$N$, and we are given a homomorphism $H_1 \to H$ with dense image.  Then the following are equivalent:
	\begin{enumerate}
	\item $(H \ltimes N, N)$ has \relT.
	\item $(H_1 \ltimes N, N)$ has \relT.
	\item $(H \ltimes N,H, N)$ has \relT.
	\item $(H_1 \ltimes N,H_1, N)$ has \relT.
	\end{enumerate}

Moreover, if these conditions hold, then there is a finitely generated group~$\Gamma$ and a homomorphism $\Gamma \to H$, such that $(\Gamma \ltimes N, N)$ and $(\Gamma \ltimes N, \Gamma, N)$ have \relT.
\end{cor}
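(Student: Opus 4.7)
The strategy is to apply \cref{SemiEquiv} in several guises, with \cref{TIffDense} bridging from the topological group~$H$ to a locally compact setting via a countable dense subgroup. The equivalences $\mathrm{(i)} \Leftrightarrow \mathrm{(iii)}$ and $\mathrm{(ii)} \Leftrightarrow \mathrm{(iv)}$ are immediate from \cref{SemiEquiv}, which already asserts that for a topological group acting on a $1$-connected nilpotent Lie group, the pair version \pref{SemiEquiv-pair} and the triple version \pref{SemiEquiv-triple} of \relT\ are equivalent.

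For $\mathrm{(i)} \Leftrightarrow \mathrm{(ii)}$, the plan is to use the amenability criterion \fullcref{SemiEquiv}{Amen}: the pair $(H \ltimes N, N)$ has \relT\ if and only if $\Int_{N/L}(H)^\bullet$ fails to be amenable for every closed, connected, $H$-invariant, proper subgroup $L$ of $N$ containing $N^{(1)}$. Because $H_1 \to H$ has dense image and $H$ acts continuously on the Lie group $\Aut(N/L)$, the closed $H$-invariant subgroups of $N$ coincide with the closed $H_1$-invariant ones, and $\Int_{N/L}(H_1)^\bullet = \Int_{N/L}(H)^\bullet$ for each such $L$. Hence the criterion is unchanged when $H$ is replaced by~$H_1$.

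For the moreover part, the plan is to descend from $H$ to a locally compact group so that \cref{TIffDense} can be applied. The closure $H^\bullet$ of the image of $H$ in $\Aut(N^{ab})$ is a closed subgroup of a Lie group, hence separable, so one can choose a countable subgroup $\Gamma_\omega \le H$ whose image in $\Aut(N^{ab})$ is dense in~$H^\bullet$. With the discrete topology, $\Gamma_\omega$ is locally compact and second countable, and the $\mathrm{(i)} \Leftrightarrow \mathrm{(ii)}$ argument applied to $\Gamma_\omega \hookrightarrow H$ yields that $(\Gamma_\omega \ltimes N, N)$ has \relT. The moreover part of \cref{TIffDense}, applied to $\Gamma_\omega$ in place of $H$, then produces a finitely generated group $\Gamma$ and a homomorphism $\Gamma \to \Gamma_\omega$ such that $(\Gamma \ltimes N, N)$ has \relT; composing with $\Gamma_\omega \hookrightarrow H$ supplies the required map $\Gamma \to H$. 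One final application of \cref{SemiEquiv} to the action of the discrete group $\Gamma$ on $N$ upgrades this pair to the triple $(\Gamma \ltimes N, \Gamma, N)$.

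The main obstacle is that $H$ is not assumed locally compact, so neither \cref{TIffDense} nor the Jolissaint-type quantitative arguments apply to $H$ directly; the construction of the countable discrete group~$\Gamma_\omega$, retaining exactly the $\Aut(N^{ab})$-data of~$H$ needed for the amenability criterion, is the key device for sidestepping this.
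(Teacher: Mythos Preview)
Your proof is correct and follows essentially the same route as the paper. For the equivalence of the four conditions, both you and the paper use \cref{SemiEquiv}: the paper simply remarks that a closed subgroup of~$N$ is $H$-invariant if and only if it is $H_1$-invariant, and then invokes \cref{SemiEquiv}, while you spell out explicitly that this makes condition~\pref{SemiEquiv-Amen} identical for $H$ and~$H_1$ (including the equality $\Int_{N/L}(H_1)^\bullet = \Int_{N/L}(H)^\bullet$, which the paper leaves implicit).

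For the ``moreover'' clause, the paper says only that it follows from \fullcref{SemiEquiv}{FiniteSet}. Your argument is a legitimate unpacking of that reference: the countable discrete group~$\Gamma_\omega$ you build is exactly the group produced inside the proof of $(\ref{SemiEquiv-Amen} \Rightarrow \ref{SemiEquiv-FiniteSet})$ in \cref{SemiEquiv}, and your subsequent appeal to \cref{TIffDense} (now applicable because $\Gamma_\omega$ is locally compact) to extract a finitely generated~$\Gamma$ is the natural way to finish. One small point to make precise: when you say ``the $(\mathrm{i}) \Leftrightarrow (\mathrm{ii})$ argument applied to $\Gamma_\omega \hookrightarrow H$,'' note that $\Gamma_\omega$ need not have dense image in~$H$ itself, only in $H^\bullet \subseteq \Aut(N^{ab})$; but since condition~\pref{SemiEquiv-Amen} depends only on closures of images in the various $\Aut(N/L)$, and these coincide for $\Gamma_\omega$ and~$H$ by your choice of~$\Gamma_\omega$, the argument goes through.
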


\begin{proof}
A closed subgroup of~$N$ is $H$-invariant if and only if it is $H_1$-invariant, so the equivalence of the four conditions follows from \cref{SemiEquiv}. The final conclusion follows from \fullcref{SemiEquiv}{FiniteSet}.
\end{proof}

\subsection{An observation on the center}

Although we are mostly interested in the abelianization of a nilpotent subgroup~$H$, we also record the following observation regarding the opposite end of a central series of~$H$.

\begin{cor} \label{CenterT}
Let $H$ be a nilpotent subgroup of a locally compact group~$G$. If there is a nontrivial subgroup~$L$ of~$H$, such that $(G,L)$ has \relT, then there is a nontrivial subgroup~$Z$ of the center of~$H$, such that $(G,Z)$ has \relT.
\end{cor}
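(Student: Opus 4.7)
My plan rests on two elementary observations: (a)~\emph{subgroup monotonicity}, i.e.\ if $(G,L)$ has \relT\ and $Z$ is a closed subgroup of~$L$, then $(G,Z)$ also has \relT, simply because every $L$-invariant vector is automatically $Z$-invariant; and (b)~in a nilpotent group, for any $\ell \notin Z(H)$ one can always find $h \in H$ such that $[h,\ell]$ sits strictly deeper in the upper central series of~$H$ than~$\ell$. The idea is to iterate~(b) to push the witness down into~$Z(H)$, and then to combine~(a) with \cref{FiniteProduct,HinFiniteProduct} to certify that the resulting central element lies inside a subgroup for which $(G,\cdot)$ has \relT.

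First I would pick a nontrivial $\ell \in L$ and replace~$L$ by~$\cl{\langle\ell\rangle}$ via subgroup monotonicity, so that $L$ becomes compactly generated and abelian. Writing $\{e\}=Z_0\subseteq Z_1=Z(H)\subseteq\cdots\subseteq Z_c=H$ for the upper central series of~$H$ and $d(x)=\min\{\,i:x\in Z_i\,\}$ for the depth, I build a sequence $\ell=\ell_0,\ell_1,\dots,\ell_m$ by $\ell_{j+1}=[h_{j+1},\ell_j]$, choosing $h_{j+1}\in H$ at each stage so that $d(\ell_{j+1})=d(\ell_j)-1$. This is possible whenever $d(\ell_j)\geq 2$: writing $k=d(\ell_j)$, the map $h\mapsto [h,\ell_j]\,Z_{k-2}$ is a homomorphism $H\to Z_{k-1}/Z_{k-2}$ (because $H$ acts trivially on $Z_{k-1}/Z_{k-2}$), and its triviality would force $\ell_j\in Z_{k-1}$, contradicting $d(\ell_j)=k$. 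After $m = d(\ell)-1$ iterations I reach a nontrivial~$\ell_m\in Z(H)$.

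To finish, I need $(G,\cl{\langle\ell_m\rangle})$ to have \relT. Unrolling the identity $[h,x]=(h^{-1}x^{-1}h)\,x$ repeatedly exhibits $\ell_m$ as an element of the subgroup $L^\ast\subseteq H$ generated by a finite collection $\{g_iLg_i^{-1}\}_{i=1}^n$ of conjugates of~$L$ by words in the~$h_j$. Each pair $(G,g_iLg_i^{-1})$ has \relT\ by conjugation invariance. Set $\tilde L=\cl{L^\ast}$; thanks to the initial reduction, $L$ and hence every $g_iLg_i^{-1}$ is compactly generated, so $\tilde L$ is a closed, compactly generated, nilpotent subgroup of~$H$. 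Applying \cref{FiniteProduct} to~$\tilde L$ with $\mathcal L=\{g_iLg_i^{-1}\}_{i=1}^n$ (which generates a dense subgroup of~$\tilde L$ by construction) produces a compact $C\subseteq\tilde L$ and indices $i_1,\dots,i_s$ with $\tilde L=C\cdot g_{i_1}Lg_{i_1}^{-1}\cdots g_{i_s}Lg_{i_s}^{-1}$, and then \cref{HinFiniteProduct} yields that $(G,\tilde L)$ has \relT. Subgroup monotonicity applied to $\cl{\langle\ell_m\rangle}\subseteq\tilde L\cap Z(H)$ now produces the desired nontrivial $Z\subseteq Z(H)$. The main source of friction is the compact-generation bookkeeping in this last paragraph; the initial reduction to $L=\cl{\langle\ell\rangle}$ is precisely what guarantees that $\tilde L$ satisfies the hypothesis of \cref{FiniteProduct}.
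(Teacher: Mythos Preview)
Your proof is correct, but it takes a longer route than the paper's. The paper argues by minimality on the upper central series: let $k$ be the smallest index for which some closed nontrivial $L\subseteq Z_k$ has $(G,L)$ with \relT, and suppose $k\ge 2$. Choose $i$ minimal with $[L,Z_i]\neq\{e\}$ and pick $h\in Z_i$ with $[L,h]\neq\{e\}$. The key observation is that $[L,h]\subseteq Z_{i-1}$, so by minimality of~$i$ the set $[L,h]$ \emph{centralizes}~$L$; hence $\ell\mapsto[\ell,h]$ is a group homomorphism on~$L$ and $[L,h]$ is already a subgroup. Since $[L,h]\subseteq L\cdot h^{-1}Lh$, one application of \cref{HinFiniteProduct} with just two factors gives $(G,[L,h])$ \relT. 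But $[L,h]\subseteq[Z_k,H]\subseteq Z_{k-1}$, contradicting the minimality of~$k$.

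By working with the whole subgroup~$L$ and exploiting the double minimality of~$k$ and~$i$, the paper produces the new subgroup in a single commutator step and bypasses the compact-generation bookkeeping entirely: no reduction to $L=\cl{\langle\ell\rangle}$, no \cref{FiniteProduct}. Your elementwise descent instead has to track that $\ell_m$ lies in a group generated by $2^m$ conjugates of~$L$ and then invoke \cref{FiniteProduct} to collapse that group to a finite product before \cref{HinFiniteProduct} applies. What your approach buys is an explicit central element, whereas the paper's gives only existence via contradiction. Two minor slips that do not affect correctness: your $\tilde L=\cl{L^\ast}$ is closed in~$G$ but need not lie in~$H$ when $H$ is not closed (irrelevant for \cref{FiniteProduct}, which only needs $\tilde L$ locally compact, compactly generated, nilpotent), and likewise $\cl{\langle\ell_m\rangle}$ may escape~$Z(H)$; take $Z=\langle\ell_m\rangle$ instead, or replace $H$ by~$\cl H$ throughout.
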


\begin{proof}
Consider the ascending central series of~$H$: 
	$$\{e\} = Z_0 \subset Z_1 \subset \cdots \subset Z_c = H .$$
Let $k$ be minimal, such that $(G,L)$ has \relT, for some closed, nontrivial subgroup~$L$ of~$Z_k$. We may assume $L \not\subseteq Z(H)$, so there is some~$i$ (which we choose to be minimal), such that $[L, Z_i] \neq \{e\}$.  Choose $h \in Z_i$, such that $[L,h] \neq \{e\}$.
Then $[L,h] \subseteq [H, Z_i] \subseteq Z_{i-1}$, so the minimality of~$i$ implies that $[L,h]$ centralizes~$L$. Therefore $[\ell_1\ell_2,h] = [\ell_1,h] \, [\ell_2,h]$ for $\ell_1,\ell_2 \in L$, so $[L,h]$ is a subgroup. Also, $[L,h] \subseteq  L \cdot h L h^{-1}$  has \relT\ by \cref{HinFiniteProduct}. Since $\{e\} \neq [L,h] \subseteq [Z_k,H] \subseteq Z_{k-1}$, this contradicts the minimality of~$k$.
\end{proof}

\begin{rem}
The proof of \cref{CenterT} establishes the following general fact about subgroups of a nilpotent group: If $L$ is a nontrivial subgroup of a nilpotent group~$H$, then there exist finitely many conjugates of~$L$, such that the product of these conjugates contains a nontrivial subgroup of the center of~$H$.
\end{rem}

\Addresses

\newpage

\gdef\SAuthor{\emph{Notes to aid the referee}}
\gdef\STitle{\emph{Notes to aid the referee}}

\begin{appendix}

\numberwithin{equation}{aid}

\section{Notes to aid the referee}

\begin{aid} \label{FuncAnal-TensorContPf}
\begin{proof}[\bf Proof of \pref{FuncAnal-TensorCont} of \cref{FuncAnal}]
Suppose $(U_i,V_i) \to (U,V)$. For $\xi \in \hilbert_1 \otimes \hilbert_2$, we need to show that $(U_i \otimes V_i)\xi \to (U \otimes V)\xi$. 

Fix $\epsilon > 0$. The vector $\xi$ is approximated to within~$\epsilon$ by a finite sum $\xi' = \sum c_j u_j \otimes v_j$. For each~$j$, we have $U_i u_j \to U u_j$ and $V_i v_j \to V v_j$ as $i \to \infty$. Since $\{u_j\} \cup \{v_j\}$ is finite, this implies $(U_i,V_i)\xi' \to (U,V)\xi'$, so 
	\begin{align*}
	\limsup \| (U_i \otimes V_i)\xi - (U \otimes V)\xi \| < 2\epsilon
	. & \qedhere \end{align*}
\end{proof}
\end{aid}

\begin{aid} \label{AinKernel}
The formula $\pi(a) = \int_{\dual A} \lambda(a) \, d \mkern0.4\thinmuskip \PP(\lambda)$ means that 
	$$ \text{$\bigl( \pi(a) f \bigr)(\lambda) = \lambda(a) \, f(\lambda)$ \ for $a \in A$ and $f \in L^2(\dual A, \mu ; \hilbert)$} . $$
Also, $A$ obviously acts trivially on~$\dual A$, so $a^{-1} \lambda = \lambda$ and $D(g,\lambda)$ is identically~$1$. Therefore, by comparing with the displayed equation characterizing $\alpha(g,\lambda)$, we see that $\alpha(a,\lambda) = \lambda(a)$. Then 
	$$ \alpha(a,\lambda) \otimes \conj{\alpha(a,\lambda)} = \lambda(a) \cdot \conj{\lambda(a)} = 1 ,$$
so we have 
	$$ \bigl( \pi'(a) f \bigr)(\lambda) = \sqrt{D(a,\lambda)} \, \bigl( \alpha(a,\lambda) \otimes \conj{\alpha(a,\lambda)} \bigr) \, f(a^{-1}\lambda)
	= \sqrt{1} \cdot 1 \cdot f(\lambda) = f(\lambda) .$$
Therefore $a$ is in the kernel of~$\pi'$.
\end{aid}

\begin{aid} \label{BDT}
\cite[Cor.~1.3]{Dani-quasi} tells us that if $m \in M$ and $v$~is any point in the support of a finite $M$-invariant measure~$\mu$ on the vector space~$\dual A$, then the ($\mathop{\mathrm{Ad}} m$)-orbit of~$v$ is bounded. This implies that $v$ is in the span of the eigenspaces of $\mathop{\mathrm{Ad}} m$ corresponding to eigenvalues (in~$\CC$) of absolute value~$1$. However, since $M$ is real split, $1$ is the only eigenvalue of $\mathop{\mathrm{Ad}} m$ that has absolute value~$1$. So $v$ is in the $1$-eigenspace of~$\mathop{\mathrm{Ad}} m$. In other words, $v$~is fixed by $\mathop{\mathrm{Ad}} m$. Since $m$ is an arbitrary element of~$M$, and $v$~is an arbitrary point in the support of~$\mu$, we conclude that the support of~$\mu$ is contained in the set of fixed points of~$M$.
\end{aid}

\begin{aid} \label{InvtMeasToL2}
Assume $\mu$ is $(H,\delta')$-invariant. 
For any fixed $h \in H$, let
	$$ \text{$X^> = \{\, \lambda \in \dual A \mid D(h,\lambda) > 1\,\}$
	\ and \ 
	$X^< = \{\, \lambda \in \dual A \mid D(h,\lambda) < 1\,\}$} .$$
Then
	\begin{align*}
	\| \rho(h) 1 - 1 \|_2^2
	&= \int_{\dual A} | \sqrt{D(h,\lambda)} - 1 |^2 \, d\mu
	\\&= \int_{\dual A} | \sqrt{D(h,\lambda)} - 1 | \cdot | \sqrt{D(h,\lambda)} - 1 |\, d\mu
	\\&\le \int_{\dual A} | \sqrt{D(h,\lambda)} - 1 | \cdot | \sqrt{D(h,\lambda)} + 1 |\, d\mu
	\\&= \int_{\dual A} | D(h,\lambda) - 1 | \, d\mu
	\\&= \int_{X^>} \bigl( D(h,\lambda) - 1 \bigr) \, d\mu \ + \  \int_{X^<} \bigl( 1 - D(h,\lambda) \bigr) \, d\mu
	\\&= \bigl( (h^*\mu)(X^>) - \mu(X^>) \bigr) + \bigl( \mu(X^<) - (h^*\mu)(X^<) \bigr)
	\\&\le 2 \| h^*\mu - \mu \|
	\\& \le 2 \delta'
	, \end{align*}
so $1$ is $(H, \sqrt{2\delta'} \bigr)$-invariant in $L^2(\dual A, \mu)$. Therefore, we may let $\delta' = (\epsilon'')^2/32$.
\end{aid}


\begin{aid} \label{AmenQuotNotT}
Suppose there is a closed, connected, $H$-invariant, proper subgroup~$L$ of~$N$ that contains~$N^{(1)}$, such that $\Int_{N/L}(H)^\bullet$ is amenable. For convenience, let $H^\bullet = \Int_{N/L}(H)^\bullet$ and $A = N/L$, so $H^\bullet$ is amenable and $A$~is a noncompact, abelian Lie group. Let $\rho$ be the unitary representation of $H \ltimes N^{ab}$ that is obtained by composing the natural homomorphism $H \ltimes N^{ab} \to H^\bullet \ltimes A$ with the regular representation of $H^\bullet \ltimes A$. Since $H^\bullet \ltimes A$ is amenable, this representation has almost-invariant vectors. However, it cannot have $N^{ab}$-invariant vectors, because $A$~is noncompact, and therefore does not fix any nonzero vectors in the regular representation of $H^\bullet \ltimes A$. So $(H \ltimes N^{ab}, N^{ab})$ does not have \relT.
\end{aid}

 \begin{aid} \label{N1abRelT}
  (\emph{Warning:} The kernel of~$\rho$ is $(N \cap N_1^{(1)})/N^{(1)}$, which may be nontrivial.) 
 Consider a unitary representation~$\pi$ of $H \ltimes N_1^{ab}$ that has almost-invariant vectors. 
  We obtain a representation of~$H \ltimes N^{ab}$ by composing $\pi$ with~$\rho$.
Since $(H \ltimes N^{ab}, N^{ab})$ has \relT, the representation $\pi \circ \rho$ must have nonzero $N^{ab}$-invariant vectors, which means that $\pi$ has $\rho(N^{ab})$-invariant vectors. The space of $\rho(N^{ab})$-invariant vectors  is ($H \ltimes N_1^{ab}$)-invariant (because $\rho(N^{ab})$ is normal in $H \ltimes N_1^{ab}$) and has almost-invariant vectors \ccf{NearInvariantForPair}. 

There is a compact subset~$C$ of~$N_1^{ab}$, such that $N_1^{ab} = C \, \rho(N^{ab})$ (because $N_1/N$ is compact). For any $\epsilon > 0$, there is a nonzero $(C,\epsilon)$-invariant vector in the space of $\rho(N^{ab})$-invariant vectors. Any such vector is $(N_1^{ab}, \epsilon)$-invariant. Since we may take $\epsilon < 1$ (and $N_1^{ab}$ is a subgroup), we conclude that $\pi$ has nonzero $N_1^{ab}$-invariant vectors \cite[Lem.~2.2]{Jolissaint}. Therefore $(H \ltimes N_1^{ab}, N_1^{ab})$ has \relT.
 \end{aid}

 \begin{aid} \label{IozziProp}
 Prop.~2.4(1) of \cite{Iozzi} states that if $G$ is a locally compact group, $A$ is a normal subgroup of~$G$, and $L$ is a  closed subgroup of~$G$, such that $(G, A)$ has \relT\ and $G/L$ has a finite $G$-invariant measure, then the pair $(L, L \cap A)$ also has \relT.
 
 We take $G = \Gamma \ltimes N_1$, $L = \Gamma \ltimes N$, and $A = N_1$ (so $L \cap A = N$).
 \end{aid}

\begin{aid} \label{NilpHasCpctPf}
This observation must be well known, and follows easily from results in the literature (such as by combining \cite[Thm.~2]{Losert1} with \cite[Lem.~3.1]{Dani-convolution}). However, we do not know where to find an explicit proof (for general locally compact groups, rather than merely Lie groups), so here is a proof.

\begin{proof}
By induction on the nilpotence class of~$N$, we may assume that $N^{(1)}$ has a unique maximal compact subgroup. By modding this out, we may assume that $N^{(1)}$ has no nontrivial compact subgroups.

Now, let $Z$ be the center of~$N$. 
By induction on the nilpotence class of~$N$, we may assume that the quotient $N/Z$ has a unique maximal compact subgroup $C/Z$. Since $C$ contains every compact subgroup of~$N$ (and the comment in the last paragraph of \cref{TripleNilpHasTMoreGen} tells us that $C$ is compactly generated), there is no harm in assuming $C = N$, so $N/Z$ is compact.

Now, it suffices to show that $N$ is abelian (because it is well known that every compactly generated, locally compact, abelian group has a unique maximal compact subgroup 
\cite[Thm.~23.11(a), p.~197]{Stroppel}). 
Suppose $N$ is not abelian. Then, since $N$ is nilpotent, there exists $g \in N \smallsetminus Z$, such that $[g,N] \subseteq Z$. Since $[g,N] \subseteq Z$, the map $x \mapsto [g,x]$ is a homomorphism. The kernel of this homomorphism contains $Z$, and $N/Z$ is compact, so the image $[g,N]$ is a compact subgroup of $N^{(1)}$. However, we said in the first paragraph that $N^{(1)}$ has no nontrivial compact subgroups, so this implies that $[g,N] = \{e\}$, which contradicts the fact that $g \notin Z$.
\end{proof}

\vbox{

}

\end{aid}

\begin{aid} \label{NoCpctSubgrpsPf}
\begin{proof}
Since $G/G^\circ$ is compact, \cref{open/cpct=Lie} provides a compact subgroup~$C$ of~$G$, such that $G/C$ is a Lie group. By assumption, $C$ must be trivial, so $G$~itself is a Lie group. Since $G/G^\circ$ is compact, this implies that $G/G^\circ$ is finite (since the identity component of a Lie group is an open subgroup). However, it is a general fact about Lie groups with finitely many connected components that $G$ is diffeomorphic to $K \times \RR^n$, for some maximal compact subgroup~$K$ and some $n \in \NN$ \cite[Thm.~14.3.11, p.~544 (and Thm.~14.1.3, p.~531)]{HilgertNeeb}. 
In our case, $K$~is trivial, so $G$ itself is diffeomorphic to~$\RR^n$, and is therefore $1$-connected.
\end{proof}
\end{aid}


\begin{aid} \label{FinProdClose}
For convenience, we reverse the numbering of the subgroups $H_1, \ldots,H_n$, so we may write $h = h_{n+1} h_n h_{n-1} \cdots h_1$ where $h_{i+1} \in C$ and $h_i \in H_i$ for $i \le n$. Let $g_i = h_{i} h_{i-1} \cdots h_1$. Then, by induction on~$k$, we have
	\begin{align*}
	\| \pi(g_k) \eta - \eta \|
	&\le \sum_{i = 0}^{k-1} \| \pi(h_{i+1} g_i)\eta - \pi(g_i) \eta \|
	\\&\le \sum_{i = 0}^{k-1} \Bigl( \| \pi(h_{i+1}) \bigl( \pi(g_i)\eta - \eta \bigr) \| + \| \pi(h_{i+1})\eta - \eta \| + \| \pi(g_i)\eta - \eta \| \Bigr)
	\\&= \sum_{i = 0}^{k-1} \Bigl( \| \pi(h_{i+1})\eta - \eta \| + 2\| \pi(g_i)\eta - \eta \| \Bigr)
	\\&< \sum_{i = 0}^{k-1} ( \delta + 2 \cdot 4^i \delta)
	\\&\le 4^k \delta
	.\end{align*}
Letting $k = n+1$ tells us that $\| \pi(h) \eta - \eta \| < 4^{n+1} \delta = 1/4$.
\end{aid}

\begin{aid} \label{Abel/LisAmenPf}
\begin{proof}
By modding out the maximal compact subgroup of~$N$, we may assume that $N$ is a $1$-connected, abelian Lie group \csee{NilpHasCpct,open/cpct=Lie}. So we may identify $N$ with the vector space~$\RR^n$. 

Let $L$ be the (unique) largest $H$-invariant subspace of~$\RR^n$, such that $(H \ltimes \RR^n,L)$ has \relT. By modding out~$L$, we may assume there is no nontrivial $H$-invariant subspace~$M$ of~$\RR^n$, such that $(H \ltimes \RR^n,M)$ has \relT. 

We wish to show $\Int_{\RR^n}(H)^\bullet$ is amenable.
Suppose not. Then the Zariski closure of $\Int_{\RR^n}(H)^\bullet$ contains a noncompact simple subgroup~$S$. Let $M_0$ be a nonzero subspace on which $S$ acts irreducibly (and nontrivially), and let $M$ be the smallest $H$-invariant subspace that contains~$M_0$. If $L$ is any proper $H$-invariant subspace of~$M$, then $L$ cannot contain~$M_0$, so $S$ acts nontrivially on $M/L$, so $\Int_{M/L}(H)^\bullet$ is not amenable. 

Now, by applying ($\ref{SemiEquiv-Amen} \Rightarrow \ref{SemiEquiv-pairAbel}$) of \cref{SemiEquiv}, we see that $(H \ltimes N,M)$ has \relT, which is a contradiction. 
\end{proof}

\medskip

Alternatively, the proof of \cite[Prop.~2.2 (i~$\Rightarrow$~ii$'$)]{CornulierValette} easily generalizes to this setting. 

(However, the proof of \cite[Cor.~3.2]{Raja} has a gap. Namely, if $\Int_{\RR^n}(H)^\bullet$ is not amenable, then the proof shows there are $H$-invariant subspaces $L \subsetneq M \subseteq \RR^n$, such that $\bigl( H \ltimes (\RR^n/L), M/L )$ has \relT, but the proof does not explain why it is possible to choose $L$ to be $\{0\}$.)
\end{aid}

\begin{aid} \label{AmenQuotNotTNilp}
(This is a slight modification of \cref{AmenQuotNotT} or the proof of \cite[Prop.~2.2 (ii'~$\Rightarrow$~i), p.~391]{CornulierValette}.)
For convenience, let $H^\bullet = \Int_N(H)^\bullet$, so $H^\bullet$ is amenable. Let $\rho$ be the unitary representation of $G = H \ltimes N$ that is obtained by composing the natural homomorphism $H \ltimes N \to H^\bullet \ltimes N$ with the regular representation of $H^\bullet \ltimes N$. Since $H^\bullet \ltimes N$ is amenable, this representation has almost-invariant vectors. However, for any closed, noncompact subgroup~$L$ of~$N$, the representation~$\rho$ cannot have $L$-invariant vectors, because the stabilizer of any vector in the regular representation of $H^\bullet \ltimes N$ is compact. So $(H \ltimes N, L)$ does not have \relT.
\end{aid}

\begin{aid}  \label{NontrivOnAbel}
The following fact is well known.

\begin{lem*}
Let $M$ be a nontrivial, semisimple group of automorphisms of a nilpotent Lie algebra~$\Lie N$. 
Then $M$ acts nontrivially on the abelianization $\Lie N/[\Lie N,\Lie N]$.
\end{lem*}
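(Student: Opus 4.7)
The plan is to prove the contrapositive: assuming $M$ acts trivially on $\Lie N^{ab} = \Lie N/[\Lie N,\Lie N]$, I would deduce that $M$ acts trivially on all of $\Lie N$, contradicting that $M$ is nontrivial. The main tool is the lower central series $\Lie N = \Lie N_1 \supseteq \Lie N_2 \supseteq \cdots \supseteq \Lie N_c = 0$ with $\Lie N_{k+1} = [\Lie N, \Lie N_k]$; since $M$ acts by Lie algebra automorphisms, each $\Lie N_k$ is $M$-invariant.

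The heart of the argument is an induction on~$k$ showing that $M$ acts trivially on each graded piece $\Lie N_k/\Lie N_{k+1}$. The base case $k=1$ is exactly the hypothesis. For the inductive step, I would use the bracket-induced surjection of $M$-modules $\Lie N_1/\Lie N_2 \otimes \Lie N_k/\Lie N_{k+1} \twoheadrightarrow \Lie N_{k+1}/\Lie N_{k+2}$; it is $M$-equivariant because $M$ acts by Lie algebra automorphisms, and $M$ acts trivially on the first tensor factor by hypothesis and on the second by the inductive assumption, so it acts trivially on the quotient as well.

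Consequently $M$ preserves the filtration and acts as the identity on the associated graded $\bigoplus_k \Lie N_k/\Lie N_{k+1}$. Choosing a basis of $\Lie N$ adapted to the filtration then realizes $M$ as a subgroup of the unipotent algebraic group of upper-triangular matrices in $\GL(\Lie N)$ with $1$'s on the diagonal. The conclusion follows from the classical fact that a semisimple group admits no nontrivial homomorphism into a unipotent (in particular, nilpotent) group: at the Lie algebra level, any such homomorphism is zero because a semisimple Lie algebra has no nonzero nilpotent quotient. Hence the image of $M$ in $\GL(\Lie N)$ is trivial, contradicting nontriviality of~$M$.

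The only delicate point I anticipate is the interpretation of ``semisimple group of automorphisms''---whether it is meant as an abstract semisimple Lie group, its identity component, or a semisimple algebraic group---but in each standard reading the closing step (a semisimple group has only the trivial homomorphism into a unipotent or nilpotent group) is a standard result, so the argument goes through uniformly.
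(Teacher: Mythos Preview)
Your argument is correct and takes a genuinely different route from the paper's. You work with the lower central series: the bracket induces $M$-equivariant surjections $\Lie N/\Lie N_2 \otimes \Lie N_k/\Lie N_{k+1} \twoheadrightarrow \Lie N_{k+1}/\Lie N_{k+2}$, so triviality on the abelianization propagates inductively to every graded piece, forcing the image of~$M$ in $\GL(\Lie N)$ to be unipotent and hence trivial by semisimplicity. The paper instead uses complete reducibility directly: it takes an $M$-invariant complement~$\mathcal W$ to a maximal subalgebra~$\mathcal M$ containing the centralizer of~$M$, and then invokes the (cited) fact that every maximal subalgebra of a nilpotent Lie algebra contains $[\Lie N,\Lie N]$ to show that triviality on the abelianization forces $\mathcal W = 0$, a contradiction. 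Your approach is more self-contained (it avoids the external maximal-subalgebra lemma) and yields the extra structural information that the action on the abelianization determines the action on the full associated graded; the paper's approach is shorter once that lemma is granted and uses semisimplicity only through Weyl's complete reducibility rather than through the ``no unipotent quotient'' principle.
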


\begin{proof}
Let $\mathcal C$ be the centralizer of~$M$ in~$\Lie N$. Since $M$ is nontrivial, the subalgebra~$\mathcal C$~is proper, so it is contained in a maximal subalgebra~$\mathcal M$ of~$\Lie N$. Since every (finite-dimensional) representation of a semisimple Lie algebra is completely reducible, there is an $M$-invariant complement~$\mathcal W$ to~$\mathcal M$ in~$\Lie N$.

Suppose $M$ acts trivially on $\Lie N/[\Lie N,\Lie N]$. Then, for all $w \in \mathcal W$ and $h \in M$, we have $hw \in w + [\Lie N,\Lie N]$, so $w - hw \in [\Lie N,\Lie N] \subseteq \mathcal M$, because every maximal subalgebra of a nilpotent Lie algebra contains the commutator subalgebra \cite[Cor.~2, p.~420]{Marshall}. However, $hw \in \mathcal W$ (because $\mathcal W$ is $M$-invariant), so we also have $w - hw \in \mathcal W$. Therefore $w - hw \in \mathcal M \cap \mathcal W = \{0\}$. This implies $w \in \mathcal C \subseteq \mathcal M$. So $w = 0$ (since $\mathcal M \cap \mathcal W = \{0\}$). However, $w$~is an arbitrary element of~$\mathcal W$, so this is a contradiction.
\end{proof}

\vbox{

}
\end{aid}

\begin{aid} \label{RelTSubset}
It is obvious that if the triple $(G,G,M)$ has \relT\ with approximation, then it has \relT.

If $(G,G,M)$ has \relT, then, for every $\epsilon > 0$, and for every unitary representation~$\pi$ of~$G$ that has almost-invariant vectors, $\pi$~has $(M,\epsilon)$-invariant vectors. So \cite[Thm.~2.2.3 ($2 \Rightarrow 1$)]{Cornulier} tells us that $(G,M)$ has \relT\ (according to Cornulier's definition).

Finally, we use the proof of \cite[Thm.~2.2.3 ($1 \Rightarrow 2$)]{Cornulier} to show that if $(G,M)$ has \relT, then $(G,G,M)$ has \relT\ with approximation. Suppose not. Then there exists $\epsilon > 0$, such that, for every compact subset~$Q$ of~$G$ and every $\delta > 0$, there exists a $(Q,\delta)$-invariant vector~$\xi_{Q,\delta}$ for some unitary representation of~$G$, such that $\xi_{Q,\delta}$~is not $(M,\epsilon)$-invariant. Let $\varphi_{Q,\delta}$ be the matrix coefficient corresponding to~$\xi$. Then, when $Q$ gets large and $\delta \to 0$, the matrix coefficient $\varphi_{Q,\delta}$ converges to~$1$, uniformly on compact subsets of~$G$. Since $(G,M)$ has \relT, this implies that the convergence is also uniform on~$M$. However, if $\varphi_{Q,\delta}$ is sufficiently close to~$1$ on all of~$M$, then $\xi_{Q,\delta}$~is $(M,\epsilon)$-invariant. This is a contradiction.
\end{aid}

\begin{aid} \label{InvtMeasFromInvtVector}
Let $\mu' = \|f'\|^2 \, \mu$. For $E \subseteq \dual A$ and $m \in M$, we have
	\begin{align*}
	| (m_*\mu')(E) - \mu'(E)|
	&=| \mu'(m^{-1}E) - \mu'(E)|
	\\& = \left| \int_{m^{-1}E} \|f'\|^2 \, d\mu - \int_E \|f'\|^2 \, d\mu  \right|
	\\&=  \left| \int_E \| f'(m^{-1} \lambda)\|^2 \, D(m,\lambda) \, d\mu(\lambda) - \int_E \|f'\|^2 \, d\mu  \right|
	\\&=  \left| \int_E \|  \pi'(m)f' \|^2 \, d\mu - \int_E \|f'\|^2 \, d\mu  \right|
	\\&=   \int_{\dual A} \bigl( \| \pi'(m)f' \|^2 - \|f'\|^2 \bigr) \, d\mu  
	\\&=   \int_{\dual A} \bigl( \| \pi'(m)f' \| + \|f'\| \bigr)\bigl( \| \pi'(m)f' \| - \|f'\| \bigr) \, d\mu  
	\\&\le   \int_{\dual A} \bigl( \| \pi'(m)f' \| + \|f'\| \bigr)\, \| \pi'(m)f' - f'\|  \, d\mu  
	\\&\le  \bigl\| \| \pi'(m)f' \| + \|f'\| \bigr\|_2 \,   \| \pi'(m)f' - f'\|_2 
		\quad \begin{pmatrix} \text{\ H\"older} \\ \text{Inequality} \end{pmatrix}
	\\&\le  \bigl( \| \pi'(m)f' \|_2 + \|f'\|_2 \bigr) \,  \frac{\delta'}{2}
	\\&=  2 \, \frac{\delta'}{2}
	\\&= \delta'
	. \end{align*}
\end{aid}

 \begin{aid} \label{ApproxForPair}
 We provide the proof that if $(G,M)$ has \relT, then $(G,M)$ has \relT\ with approximation. (This is adapted from Jolissaint~\cite{Jolissaint}.)
 
 Let $I$ be the set of all pairs $(Q, \delta)$, such that $Q$ is a nonempty compact subset of~$G$ and $\delta > 0$. This is  a net if we specify that $(Q_1 , \delta_1) \preceq (Q_2 , \delta_2)$ if and only if $Q_1 \subseteq Q_2$ and $\delta_1 \ge \delta_2$.

If $(G,M)$ does not have \relT\ with approximation, then there exists $\epsilon > 0$, such that for every $i = (Q,\delta) \in I$, there is a unitary representation~$\pi_i$ of~$G$ with a $(Q,\delta)$-invariant unit vector~$\xi_i$ that is not $(M,\epsilon)$-invariant.

Let $\varphi_i(g) = \langle \pi_i(g) \xi_i \mid \xi_i \rangle$. Then each $\varphi_i$ is positive-definite, and $\varphi_i \to 1$ uniformly on compact sets. Since $(G,M)$ has \relT, we see from \cite[Thm.~1.1 ($2 \Rightarrow 1$)]{Cornulier} that $\varphi_i \to 1$ uniformly on~$M$. (This is Cornulier's \emph{definition} of \relT\ when $M$ is a subset.) Therefore $\sup_{m \in M} \| \pi_i(m) \xi_i - \xi_i \| \to 0$. 

This contradicts the fact that $\sup_{m \in M} \| \pi_i(m) \xi_i - \xi_i \| \ge \delta$, since $\xi_i$ is not $(M,\delta)$-invariant.
\end{aid}

\begin{aid} \label{actR/Ufinite}
Let $R$ and $U$ be the radical and nilradical of a connected Lie group~$G$. 
We provide a proof of the well-known fact that $\Aut G$ acts on $R/U$ via a finite group. 
Note that, since every element of $\Aut G$ acts on~$R$ via an element of $\Aut R$, it suffices to show that $\Aut R$ acts on $R/U$ via a finite group. 

Let $\Lie R$ be the Lie algebra of~$R$, let $\mathbf R$ be the Zariski closure of $\Ad R$ in $\mathbf{GL}(\Lie R)$, and let $\mathbf U$ be the unipotent radical of~$\mathbf R$. 
Since $\Aut \Lie R$ normalizes $\Ad R$, it also normalizes~$\mathbf R$, and therefore acts on $\mathbf R/\mathbf U$. Since $\mathbf R$ is a subgroup of $\Aut \Lie R$, and $\Aut \Lie R$ is Zariski closed, this can be viewed is the action of the algebraic group $(\Aut \Lie R)/ \mathbf U$ by conjugation on the normal subgroup $\mathbf R/\mathbf U$. However, since $\mathbf R$ is solvable, it is well known that the algebraic group $\mathbf R/\mathbf U$ is a torus \cite[\S19.1, p.~122]{Humphreys-LinAlgGrps} (that is, $\mathbf R/\mathbf U$ is connected and is isomorphic, as an algebraic group, to a group of diagonal matrices).
It is also well known that the centralizer of any torus in an algebraic group has finite index in its normalizer \cite[Cor.~16.3, p.~106]{Humphreys-LinAlgGrps}. Therefore, the action of $\Aut \Lie R$ on $\mathbf R / \mathbf U$ must be by a finite group.

Let $V$ be the kernel of the natural homomorphism $R \to \mathbf R/\mathbf U$. Then $V^\circ$ is a connected, normal subgroup of~$R$. Also, by definition, we have $\Ad_R V \subseteq \mathbf U$, so $\Ad V^\circ$ is unipotent. This implies that $V^\circ$ is nilpotent \cite[Cor.~17.5, p.~113]{Humphreys-LinAlgGrps}. So $V^\circ$ is contained in the nilradical~$U$ of~$R$, which means that $R/U$ is a quotient of $R/V^\circ$. 

The first paragraph shows that some finite-index subgroup of $\Aut R$ centralizes $R/V$. Since $R/V^\circ$ is connected, and $V/V^\circ$ is discrete, this finite-index subgroup must also centralize $R/V^\circ$. From the preceding paragraph, we conclude that this finite-index subgroup centralizes $R/U$. So $\Aut R$ acts on $R/U$ by a finite group.

\vbox{

}

\end{aid}

\begin{aid} \label{NormalLie-nonormalAid}
Suppose $M$ is a nontrivial, connected, $H$-invariant, normal subgroup of~$N$, such that the space $\hilbert^M$ of $M$-invariant vectors is nonzero. Let $N_0 = N/M$. (We may assume $M$ is closed,  since every $M$-invariant vector is also $\cl{M}$-invariant.) Since $M$ is an $H$-invariant, normal subgroup of~$N$, we know $M \triangleleft H \ltimes N$, so $\hilbert^M$ is $(H \ltimes N)$-invariant. Therefore, we obtain a representation $\pi_0$ of $H \ltimes N_0$ by restricting~$\pi$ to~$\hilbert^M$. From \cref{NearInvariantForPair}, we see that $\pi_0$ has almost-invariant vectors. 

Let $S_0$ be the closure of the image of~$S$ in~$N_0$, let $U_0$ be the nilradical of~$N_0$, and let 
	$$ (U_S)_0 = \frac{U_0}{\cl{ \vphantom{\bigl(} [S_0,U_0] \cdot (S_0 \cap U_0)}} .$$ 
We claim that \pref{NormalLie-S}, \pref{NormalLie-R}, and~\pref{NormalLie-U} hold with $N_0$, $S_0$, $U_0$, and $(U_S)_0$ in place of $N$, $S$, $U$, and~$U_S$. First of all, \pref{NormalLie-S}$_0$ is immediate from \pref{NormalLie-S}, since the image of~$S$ is dense in~$S_0$. And \pref{NormalLie-R}$_0$ is immediate from \pref{NormalLie-R}, since $N_0/(S_0U_0)$ is a quotient of $N/(SU)$. However, \pref{NormalLie-U}$_0$ is not quite immediate from \pref{NormalLie-U}, because $U_0$ may be larger than the image~$U_1$ of~$U$ in~$N_0$. However, we see from \pref{NormalLie-R} that $U_0/\cl{U_1}$ is compact, so the difference is not large enough to affect \relT.

Since $\dim N_0 = \dim N - \dim M < \dim N$, we conclude by induction on the dimension that $(H \ltimes N_0, N_0)$ has \relT, so there are nonzero $\pi_0(N_0)$-invariant vectors. Since $\pi_0$ is a restriction of~$\pi$, these vectors are $\pi(N)$-invariant.
\end{aid}

\begin{aid} \label{NormalLie-Snormal}
Let $\widetilde N$ be the universal cover of~$N$, let $\widetilde S$ be the product of the noncompact, simple factors of a Levi subgroup of~$\widetilde N$, and let $\widetilde U$ be the nilradical of~$\widetilde N$. It is well known that $[\widetilde S, \widetilde N] = \widetilde S[\widetilde S,\widetilde U]$. However, if we assume $\widetilde S$ has been chosen so that its image in~$N$ is dense in~$S$, then we know that $\widetilde S$ centralizes~$\widetilde U$, so $[\widetilde S,\widetilde U]$ is trivial. Therefore $[\widetilde S, \widetilde N] = \widetilde S$, so $\widetilde S \triangleleft \widetilde N$. Since all Levi subgroups are conjugate, this implies that $\widetilde S$ is characteristic in~$\widetilde N$, so $\widetilde S \triangleleft H \ltimes \widetilde N$. By applying the covering map $\widetilde N \to N$, we conclude that $S \triangleleft H \ltimes N$.
\end{aid}

\begin{aid} \label{N1MaxCpct}
Since $N$ is connected, \cref{open/cpct=Lie} tells us that $N$ has a compact, normal subgroup~$C$, such that $N/C$ is a (connected, solvable) Lie group. Then Lie's Theorem in the structure theory of connected, solvable Lie groups tells us that $N^{(1)}C/C$ is nilpotent 
\cite[Cor.~C, p.~16]{Humphreys}, and therefore has a unique maximal compact subgroup~$C_1/C$ \csee{NilpHasCpct}. 
Then $C_1 \cap N^{(1)}$ is the unique maximal compact subgroup of~$N^{(1)}$. 

Also, we have $N^{(1)}/(C_1 \cap N^{(1)}) \cong N^{(1)} C_1/C_1$. 
Since $C_1$ contains~$C$, this is isomorphic to a quotient of $N^{(1)}C/C$, which is nilpotent. So $N^{(1)}/(C_1 \cap N^{(1)})$ is nilpotent.

\vbox{

}

\end{aid}


\vfill\vfill
\end{appendix}

\end{document}